\definecolor{ggreen}{rgb}{0,0.75,0.08}
\theoremstyle{plain}
\theoremstyle{definition}
\newtheorem{theorem}{Theorem}[section]
\newtheorem{corollary}[theorem]{Corollary}
\newtheorem{definition}[theorem]{Definition}
\newcounter{dummy} 
\newtheorem{lmma}[dummy]{Lemma}
\newtheorem{thm}[dummy]{Theorem}
\newcounter{ClaimCounter} 
\newtheorem{claim}[ClaimCounter]{Claim}
\newtheorem{question*}{Question}
\newtheorem{notation}[theorem]{Notation}
\newcommand{\A}{\ensuremath{\alpha}}
\newcommand{\K}{\ensuremath{\kappa}}
\newcommand{\B}{\ensuremath{\beta}}
\newcommand{\W}{\ensuremath{\omega}}
\newcommand{\WW}{\ensuremath{\Omega}}
\newcommand{\G}{\ensuremath{\gamma}}
\newcommand{\D}{\ensuremath{\delta}}
\newcommand{\RR}{\ensuremath{\mathbb R}}
\newcommand{\NN}{\ensuremath{\mathbb N}}
\newcommand{\0}{\ensuremath{\varnothing}}
\newcommand{\cP}{\ensuremath{\mathcal P}}
\newcommand{\DD}{\ensuremath{\partial}}
\newcommand{\cn}{\ensuremath{\frak c}}
\newcommand{\tl}{\ensuremath{\widetilde}}
\newcommand{\wt}{\ensuremath{\widetilde}}
\begin{document}

\openup 0.6em

\fontsize{13}{5}
\selectfont

	\begin{center}\LARGE Indecomposable Continuum with a Strong Non-Cut Point
	\end{center}
	
	\begin{align*}
	\text{\Large Daron Anderson }  \qquad \text{\Large Trinity College Dublin. Ireland }  
	\end{align*} 
	\begin{align*} \text{\Large andersd3@tcd.ie} \qquad \text{\Large Preprint September 2018}  
	\end{align*}$ $\\

	\begin{center}
		\textbf{ \large Abstract}
	\end{center}\noindent We construct an indecomposable continuum with exactly one strong non-cut point.
		The method is an adaptation of Bellamy \cite{one}.
		We start with an $\W_1$-chain of indecomposable metric continua and retractions.
		The inverse limit is an indecomposable continuum with exactly two composants.
		Our example is formed by identifying a point in each composant.  
	\section{Introduction}

	\noindent
	Every point $p$ of an indecomposable metric continuum $M$ is a weak cut point. 
	That means there are distinct $x,y \in M-p$ such that each subcontinuum $K \subset M$ with $\{x,y\} \subset K$ has $p \in K$.
	The proof follows from $M$ having more than one composant; and the composant-by-composant version of the result fails.
	Namely some $q \in M$ might fail to weakly cut its composant $\K(q)$. In that case we call $q$ a {\it strong non-cut point} of $\K(q)$.
	For example consider the endpoint $c$ of the Knaster buckethandle.
	It is easy to see $\K(c)-c$ is even arcwise connected.
	Hence $c$ has only trivial reasons for being a weak cut point.
	
	There exist indecomposable non-metric continua with exactly one composant $-$ henceforth called \textit{Bellamy continua}.
	Each Bellamy continuum is simultaneously an indecomposable continuum and a composant of an indecomposable continuum.
	Bellamy continua resemble indecomposable metric continua in being compact.
	In this paper we show they resemble composants of indecomposable metric continua 
	in that they can have strong non-cut points.

	\section{Terminology and Notation}
	\noindent
	Throughout $X$ is a continuum. 
	That means a nondegenerate compact connected Hausdorff space.
	For $a,b \in X$ we say $X$ is \textit{irreducible about} $\{a,b\}$
	or \textit{irreducible from} $a$ to $b$
	to mean no proper subcontinuum of $X$ contains $\{a,b\}$.
	The subspace $A \subset X$ is called a \textit{semicontinuum} to mean
	for each $a,c \in A$ some subcontinuum $K \subset A$ has $\{a,c\} \subset K$.
	Every subspace $A \subset X$ is partitioned into maximal semicontinua called the \textit{continuum components} of $A$.
	For background on metric continua see \cite{kur2} and \cite{nadlerbook}.
	The results cited here have analagous proofs for non-metric continua.
	
	For a subset $S \subset X$ denote by $S^\circ$ and $\overline S$ the interior and closure of $S$ respectively. 
	By \textit{boundary bumping} we mean the principle that, for each closed $E \subset X$, each component $C$ of $E$ meets $\DD E = \overline E \cap \overline {X-E}$.
	For the non-metric proof see $\S$47, III Theorem 2 of \cite{kur2}. 
	One corollary of boundary bumping is that the point $p \in X$ is in the closure of each continuum component of $X-p$.

	For $b \in X$ we omit the curly braces and write $X-b$ instead of $X-\{b\}$.
	For distinct $a,b,c \in X$ we say $b$ \textit{weakly cuts} $a$ from $c$ and write $[a,b,c]_X$ to mean
	$a$ and $c$  have different continuum components in $X- b$.  When there is no confusion we just write $[a,b,c]$.
	We say $b \in X$ \textit{weakly cuts} the semicontinuum $A \subset X$ to mean $[a,b,c]$ for some $a,c \in A$
	and call $b$ a \textit{weak cut point} to mean it weakly cuts $X$ and a \textit{strong non-cut point} otherwise.
	
	We define the {\it interval} $[a,c]_X = \big \{b \in X: [a,b,c]_X\big\}$. Again we often write $[a,c]$ without confusion.
	Note $[a,c]$ is not in general connected as the interval notation suggests.
	In case $[a,c]$ is connected and $b \in [a,c]$ we have $[a,b] \cup [b,c] = [a,c]$.
	Moreover $[a,b,c]_{[a,c]}$ for each \mbox{$b \in [a,c] - \{a,c\}$}.
	Clearly the weak cut structure is topologically invariant. 
	That means $[a,b,c]_X \iff [h(a),h(b),h(c)]_Y$ for each $a,b,c \in X$ and homeomorphism $h:X \to Y$.
	
	We say $X$ is \textit{indecomposable} to mean it is not the union of two proper subcontinua.
	Equivalently each proper subcontinuum is nowhere dense.
	The \textit{composant} $\K(x)$ of the point $x \in X$ is the union of all proper subcontinua that have $x$ as an element.
	Indecomposable metric continua are partitioned into $\cn$ many pairwise disjoint composants \cite{Ccomposants}.
	In case $\K(x) \ne \K(y)$ then $X$ is irreducible about $\{x,y\}$.
	There exist indecomposable non-metric continua \cite{one,NCF2,Smith1} with exactly one composant, henceforth called \textit{Bellamy continua}.

	We call $X$ \textit{hereditarily unicoherent} to mean it has some $-$ and therefore all $-$ of the equivalent properties:
	
	\begin{enumerate}[label=(\Roman*)]
		\item The intersection of any two subcontinua of $X$ is connected.
		\item $[a,b,c]_X \iff [a,b,c]_L$ for each subcontinuum $L \subset X$ with $a,b,c \in L$.
		\item $[a,c]_X = [a,c]_L$ for each subcontinuum $L \subset X$ with $a,c \in L$.
		\item Whenever $a,c \in X$ the set $[a,c]$ is a subcontinuum.
	\end{enumerate}

	The continuous function $f:X \to Y$ is called \textit{monotone} to mean each $f^{-1}(y) \subset X$ is connected for $y \in Y$.
	Theorem 6.1.28 of \cite{Engelking} says this implies $f^{-1}(K) \subset X$ is a continuum for  $K \subset Y$ a continuum.
	The function $f$ is called \textit{proper} to mean $f(L) \subset Y$ is proper whenever $L \subset X$ is a proper subcontinuum.
	
	The partition $\cP$ of $X$ into closed subsets is called \textit{upper semicontinuous} to mean the following:
	For each $P \in \cP$ and open $U \subset X$ containing $P$ there is open $V \subset U$ with $P \subset V$ and $V$ a union of elements of $\cP$.
	Upper semicontinuity of the partition is equivalent to the quotient space $X/\cP$ being a continuum.
	
	Throughout $K \subset \RR^2$ is the \textit{quinary Cantor set}.
	That means the points in $[0,1] \times \{0\}$ whose $x$-coordinate can be expressed in base-$5$ without the digits $1$ or $3$. 
	We write $K_1$ for the middle third of $K$; $K_2$ for the leftmost two thirds of the portion of $K$ right of $K_1$ ; $K_3$ for the leftmost two thirds of the portion of $K$ right of $K_2$ and so on.
	Formally $K_1 = K \cap [2/5,3/5]$, $K_2 = K \cap[4/5, 1-2/25]$ and $K_n =$ \mbox{$K \cap [1-1/5^n, 1-2/5^{n+1}]$} for each $n > 1$.
	Let each $k(n) = 1 -2/5^n$ be the right endpoint of $K_n$.
	Let $G: K \to K $ be the unique linear order-reversing isometry
	and define each $P_n = G(K_{n+1})$ and $p(n) = G(k(n+1))$.
	Clearly $K = $ \mbox{$\{0\} \cup \big ( \bigcup_n P_n \big ) \cup \big ( \bigcup_n K_n \big ) \cup \{1\}$} is a disjoint union.

	We write $Q' \subset \RR^2$ for the union of all semicircles in the upper half-plane with centre $(1- 7/10^n,0)$ for some $n \in \NN$  
	and endpoints in $K$.
	We write $R' \subset \RR^2$ for the union of all semicircles in the lower half-plane with centre $(7/10^n,0)$ for some $n \in \NN$ 
	and endpoints in $K$.
	We write $Q$ for the set $\{(x,y+1) \in \RR^2: (x,y) \in Q'\}$
	and $R$ for the set $\{(x,y-1) \in \RR^2: (x,y) \in R'\}$.

	Throughout $B = Q' \cup R'$ is the \textit{quinary Knaster buckethandle}.
	$B$ is a metric indecomposable hereditarily unicoherent continuum.
	It is easy to see $[x,y]$ is an arc whenever $x,y$ share a composant and $[x,y] = B$ otherwise.
	Let $a_0$ be the point $(3/10,3/10) \in B$.
	We write $C_0$ for the composant of the left endpoint $c_0 = (0,0)$
	and $D_0$ for the composant of the right endpoint $d_0 = (1,0)$.
	
	Throughout $\W = \{0,1,2, \ldots\}$ is the first infinite ordinal and $\W_1$ the first uncountable ordinal.
	Every initial segment of $\W_1$ is countable and every countable subset has an upper bound.
	For the ordered set $\WW$ we say $\Psi \subset \WW$ is \textit{cofinal} to mean it has no upper bound in $\WW$.
	Every countable ordinal has a cofinal subset order-isomorphic to $\W$.
	We say $\Psi \subset \WW$ is \textit{terminal} to mean $\WW - \Psi$ has an upper bound.

	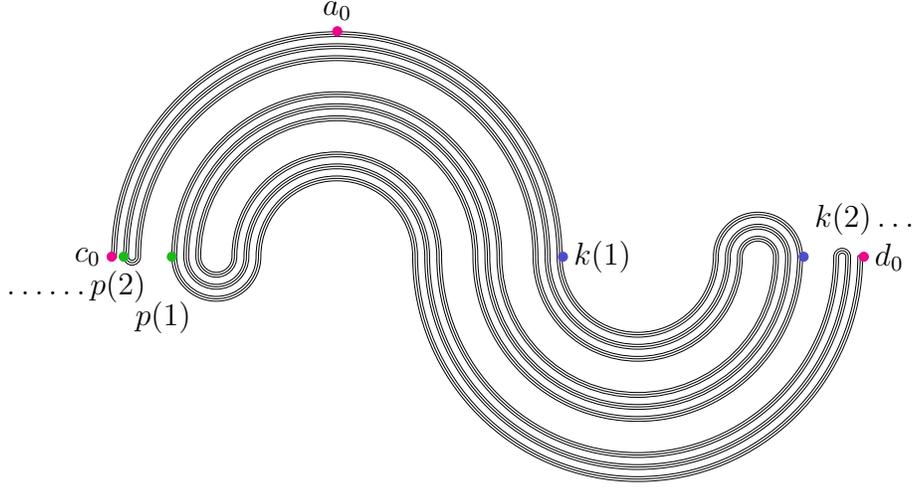
\begin{figure}[!h]
		\centering
		\begin{tikzpicture}

		\begin{scope}[xscale = 2, yscale = 2]


		
		\foreach \one in {0}
		\foreach \two in {0,2,4}
		\foreach \three in {0,2,4}
		\foreach \four in {0,2,4}
		\draw[line width = 0.2pt] (\one + \two/5 + \three/25 + \four/125, 1) arc (180:0: 1.5 -2/625 - \one - \two/5 - \three/25 - \four/125);

		\foreach \one in {0}
		\foreach \two in {0,2,4}
		\foreach \three in {0,2,4}
		\foreach \four in {0,2,4}
		\draw[line width = 0.2pt]  (5 - 1/125- \one - \two/5 - \three/25 - \four/125, 1) arc (0:-180: 1.5 -2/625 - \one - \two/5 - \three/25 - \four/125 -1/625);

		\foreach \one in {4}
		\foreach \two in {0}
		\foreach \three in {0,2,4}
		\foreach \four in {0,2,4}
		\draw[line width = 0.2pt]  (\one + \two/5 + \three/25 + \four/125, 1) arc (180:0: 1/5 + 2/25 + 2/125 - \three/25 - \four/125);

		\foreach \one in {4}
		\foreach \two in {0}
		\foreach \three in {0,2,4}
		\foreach \four in {0,2,4}
		\draw[line width = 0.2pt]  (5 - 1/125 - \one - \two/5 - \three/25 - \four/125, 1) arc (0:-180: 1/5 + 2/25 + 2/125 - \three/25 - \four/125);

		\foreach \one in {4}
		\foreach \two in {4}
		\foreach \three in {0}
		\foreach \four in {0,2,4}
		\draw[line width = 0.2pt]  (\one + \two/5 + \three/25 + \four/125, 1) arc (180:0: 1/25 + 2/125 + 1/625 - \three/25 - \four/125 - 1/625);

		\foreach \one in {4}
		\foreach \two in {4}
		\foreach \three in {0}
		\foreach \four in {0,2,4}
		\draw[line width = 0.2pt]  (5 - 1/125 -\one - \two/5 - \three/25 - \four/125, 1) arc (0:-180: 1/25 + 2/125 + 1/625 - \three/25 - \four/125 -1/625);

		\foreach \one in {4}
		\foreach \two in {4}
		\foreach \three in {4}
		\foreach \four in {0}
		\draw[line width = 0.2pt]  (\one + \two/5 + \three/25 + \four/125, 1) arc (180:0: 1/125 );

		\foreach \one in {4}
		\foreach \two in {4}
		\foreach \three in {4}
		\foreach \four in {0}
		\draw[line width = 0.2pt]  (5 - 4/625 -\one - \two/5 - \three/25 - \four/125, 1) arc (0:-180: 1/125 + 1/625 );

		\filldraw[magenta] (0,1) circle [ radius=0.03];
		\filldraw[magenta] (5,1) circle [ radius=0.03];
		
		\filldraw[magenta] (1.5,2.5) circle [ radius=0.03];
		\node [above] at (1.5,2.5)  {$a_0$};
		
		\filldraw[blue!70!black!70!] (3,1) circle [ radius=0.03];
		\node [right] at (3,1)  {$k(1)$};

		\filldraw[blue!70!black!70!] (5-2/5,1) circle [ radius=0.03];
		\node [right] at (5-2/5,1.25)  {$k(2) \ldots $};

		\filldraw[green!70!black!90!] (2/5,1) circle [ radius=0.03];
		\node [left] at (3/5,0.6)  {$ p(1) $};

		\filldraw[green!70!black!90!] (2/25,1) circle [ radius=0.03];
		\node [left] at (0.3,0.8)  {$\ldots \ldots p(2) $};
		
		\node [left] at (0,1)  {$c_0$};
		\node [right] at (5,1)  {$d_0$};
		
		\end{scope}

		\end{tikzpicture} 
		\caption{The \textit{quinary Knaster buckethandle}}\label{5Buckethandle(a)}
	\end{figure}

	The poset $\WW$ is said to be \textit{directed} to mean for each $\G,\B \in \WW$ there is $\A \in \WW$ with $\G,\B \le \A$.
	Note most authors require $\G,\B < \A$. This prohibits maximal elements.
	In this paper it is convenient to allow a directed set to have maximal elements.
	
	An \textit{inverse system} over the directed set $\WW$ consists of the following data: $(1)$ a family of topological spaces $T(\A)$ for each $\A \in \WW$ and $(2)$ a family of continuous maps $f^\A_\B : T(\A) \to T(\B)$ for each $\B \le \A$ such that $(3)$ we have $f^\B_\G \circ f^\A_\B = f^\A_\G$ whenever $\G\le \B \le \A$. The property $(3)$ is called  \textit{commutativity of the diagram}.
	The \textit{inverse limit} $T$ of the system is the space 
	
	\begin{center}                                             
		$\displaystyle \varprojlim \{T(\A); f^\A_\B: \A,\B \in \WW\} = \Big \{(x_\A) \in \prod_{\A \in \WW} T(\A) : f^\A_\B(x_\A) = x_\B \ \forall \, \B \le \A\Big \}$.
	\end{center}
	
	We often suppress the index set and write for example $\varprojlim \{T(\A); f^\A_\B\}$.
	The functions $f^\A_\B$ are called the \textit{bonding maps}.
	Write $\pi_\B : T \to T(\B)$ for the restriction of the projection $\prod_\A T(\A) \to T(\B)$.
	
	The inverse limit $X$ of a system $\{X(\A);f^\A_\B\}$ of continua is itself a continuum.
	If moreover each bonding map is surjective then so is each $\pi_\B$. In that case we call the inverse system (limit) \textit{surjective}.
	
	For any subcontinuum $L \subset X$ we have $L = \varprojlim \{\pi_\A(L); f^\A_\B\}$ 
	where each $f^\A_\B$ is restricted to $\pi_\A(L)$.
	Note commutativity implies $f^\A_\B$ has range $\pi_\B(L)$ hence the subsystem is well defined.
	For cofinal $\Psi \subset \WW$ the map $\displaystyle  (x_\A)_{\A \in \WW} \mapsto (x_\A)_{\A \in \Psi}$ 
	is an homeomorphism between $X$ and the inverse limit $\varprojlim \{X(\A);f^\A_\B: \A,\B \in \Psi\}$ over $\Psi$.
	
	\section{The Successor Stage}\label{4Sec2}
	
	\noindent
	We use transfinite recursion to construct the eponymous indecomposable continuum as the inverse limit of a system \mbox{$\{X(\A); f^\A_\B : \A,\B< \W_1\}$} of metric continua and retractions. This section shows how to construct each $X(\B+1)$ from $X(\B)$. The following section deals with limit ordinals.
	
	To begin let $X(0) = B$ be the quinary buckethandle.
	Let the composants $C_0,D_0 \subset B$ and points $a_0,c_0 \in C_0$ be as described in the Introduction. Define the following two sequences $(p^n_0)$ and $(q^n_0)$ in $X(0)$: Choose an homeomorphism $[0,1] \mapsto [a_0,c_0] $ with $0 \mapsto a_0$ and $1 \mapsto c_0$ and let each $p_0^n$ be the image of $1-1/n$. Let each $q^n_0$ be the point $k(n) \in B$ as defined in the Introduction.
	Observe the pair of sequences $\big ( (p^n_0),(q^n_0) \big )$ satisfies the following definition.

	\begin{definition}
		For $a \in X$ and $c \in \K(a)$ we define a \textit{tail from} $a$ \textit{to} $c$ 
		as an ordered pair $T = \big ( (p^n),(q^n) \big )$ of sequences in $\K(a)$ with the properties:
		
		\begin{enumerate}[label=(\arabic*)]
			\item[\normalfont (1)] For each $n \in \NN$ we have $a \in [p^n,q^n]$ and $a \in [c,q^n]$. 
			\item[\normalfont (2)] For each $n \in \NN$ we have  $q^n \notin [c,a]$. 
			\item[\normalfont (3)] $[p^1,a] \varsubsetneq [p^2,a] \varsubsetneq \ldots \ $. 
			\item[\normalfont (4)] $[a,q^1] \varsubsetneq [a,q^2] \varsubsetneq \ldots \ $. 
			\item[\normalfont (5)] $\bigcup  \big \{[p^n,q^n]: n \in \NN \big \} = \K(a) -c$. 
			\item[\normalfont (6)] $\bigcup  \big \{[p^n,a]: n \in \NN \big \} = [c,a] -c$. 
			\item[\normalfont (7)] For each  $n \in \NN$ and $x \in X-[c,a]$ we have  either $[c,q^n] \subset [c,x]$ or $[c,x] \subset [c,q^n]$.
		\end{enumerate}
		
	\end{definition}
	
	The notion of a tail is pivotal to our example. Indeed as part of the construction we will at stage $\A<\W_1$ choose a tail $T^\A = \big ( (p^n_\A),(q^n_\A) \big )$ on $X(\A)$ so that the tails behave nicely with respect to the bonding maps. This is made precise below.
	
	In the next definition and throughout when we write for example \mbox{$a_\B \mapsto a_\G$} the map in question is understood to be the bonding map $f^\B_\G$.
	Similarily for subsets $B \subset X(\B)$ and $C \subset X(\G)$ we write $B \to C$ to mean $f^\B_\G(B) = C$.

	\begin{definition}\label{deftailmap}
		Suppose $\{X(\B); f^\B_\G: \G,\B < \A\}$ is an inverse system 
		and each $X(\B)$ has a distinguished pair of points $(a_\B,c_\B)$ and pair of sequences $\big ( (p^n_\B),(q^n_\B) \big )$.
		We say the system is \textit{coherent} to mean
		$a_\B \mapsto a_\G$ and $ c_\B \mapsto c_\G$ and each \mbox{$p^n_\B \mapsto p^n_\G \,$}, $ \ q^n_\B \mapsto q^n_\G$,
		$\ [p_\B^n, q_\B^n] \to [p_\G^n, q_\G^n] \,, \  [p_\B^n, a_\B] \to [p_\G^n, a_\G] $ 
		and $[c_\B, a_\B] \to [c_\G, a_\G]$ for $\G,\B < \A $.
	\end{definition}
	
	In practice the distinguished points and sequences in Definition \ref{deftailmap} will always come about from a tail. However in Section \ref{4Sec3} we already have an inverse system, and most of the work goes into showing a given pair of sequences is indeed a tail. Thus the definition is given in slightly more generality.
	
	At stage $\A < \W_1$ we have already constructed the coherent inverse system \mbox{$\{X(\B);f^\B_\G: \B,\G < \A\}$} of indecomposable hereditarily unicoherent metric continua and retractions. We assume the following objects have been specified for each $\B < \A$:

	\begin{enumerate}[label=(\roman*)]
		\item Distinct composants $C(\B), D(\B) \subset X(\B)$
		\item Points $a_\B,c_\B \in C(\B)$
		\item A tail $T^\B = \big ( (p^n_\B),(q^n_\B) \big )$ from $a_\B$ to $c_\B$
	\end{enumerate}
	
	We also assume for each $\G,\D < \A$ the three conditions hold:
	
	\begin{enumerate}[label=(\alph*)]
		\item $\bigcup  \big \{X(\D): \D < \G \big \} \subset D(\G)$. 
		\item $\bigcup  \big \{f^\G_\D \big (X(\G) - X(\D) \big ): \G > \D \big \} = C(\D)$.
		\item $\big \{x \in [c_\G,a_\G]: f^\G_\D(x) \in [p^n_\D,a_\D] \big \}= [p^n_\G,a_\G]$ for each $n \in \NN$.
	\end{enumerate}

	Conditions (a) and (b) come straight from \cite{one} and will ensure the limit has exactly two composants.
	Condition (c) is needed to make the resulting point $(c_\B)$ not weakly cut the composant. 
	
	For an illustration of what Condition (c) means consider the system of retractions $ [0,1] \leftarrow [0,2] \leftarrow [0,3] \leftarrow \ldots$ where the bonding map $[0,n] \to [0,m]$ collapses $[m,n]$ to the point $m \in [0,m]$. For $X(\B) = [0,\B]$ and $a_\B = \B$ and each $p^n_\B = 1/n$ we see the system has Condition (c). 
	Indeed our initial $ [c_0,a_0] \leftarrow [c_1,a_1]  \leftarrow [c_2,a_2] \leftarrow \ldots$ will turn out to be a copy of this simpler system, and so the example should be kept in mind throughout.
	
	We are ready to begin the succesor step. Suppose $\A = \B+1$ is a successor ordinal. We will construct an indecomposable hereditarily unicoherent metric continuum $X(\B+1)$ and retraction $f^{\B+1}_\B: X(\B+1) \to X(\B)$. Then we can define the bonding maps $f^{\B+1}_\G = f^{\B+1}_\B  \circ f^{\B}_\G $.
	We will specify the objects (i), (ii) and (iii) when $\B$ is replaced by $\B+1$. Finally we will check the enlarged system is coherent and Condition (a), (b) and (c) hold for all $\G,\D \le \B+1$.
	
	We use terminology from the Introduction.
	Identify $Q \cup R \subset \RR^2$ with the subspace $(Q \cup R) \times \{a_\B\}$ of $\RR^2 \times X(\B)$.
	Define $M \subset \RR^2 \times X(\B)$ as follows.

	\begin{center}
		$\wt  P_n = P_n \times \{-1,1\} \times [p^n_\B, a_\B] \qquad \qquad \wt  K_n = K_n \times \{-1,1\} \times [a_\B, q^n_\B]$ 
	\end{center}
	
	\begin{center} 
		$  M = \Big ( \bigcup_{n \in \NN} \wt  P_n  \Big ) \cup \Big ( \bigcup_{n \in \NN} \wt  K_n  \Big ) $
	\end{center}
	Properties (4) and (5) of the tail imply  $\overline M =  \wt P_\infty \cup M \cup \wt K_\infty$ for 
	
	\vspace{2mm}
	
	\begin{center}
		$\wt  P_\infty =  \{0\} \times \{-1,1\} \times [c_\B,a_\B] \qquad \wt K_\infty = \{1\} \times \{-1,1\} \times X(\B) $ 
	\end{center}

	\vspace{2mm}
	
	The set $\overline M \cup (Q \cup R)$ is compact since $(Q\cup R)$ is closed and bounded
	and $\overline M$ is contained in the product $ K \times \{-1,1\} \times X(\B)$ of compact sets.
	It is also metric since $\RR^2$, $K$, $\{-1,1\}$ and by assumption $X(\B)$ are metric spaces.

	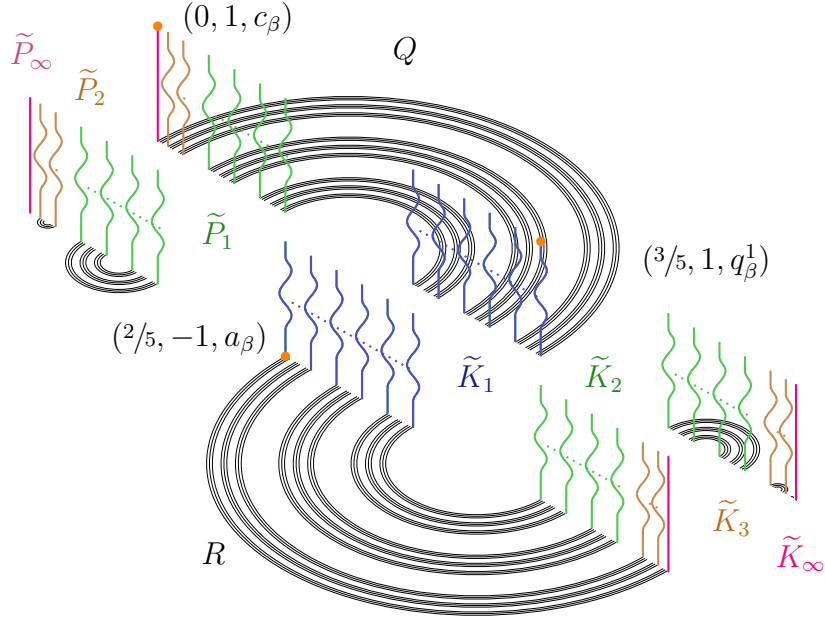
\begin{figure}[!h]
		\centering 
		\begin{tikzpicture}[thick,scale=0.8 ]

		\usetikzlibrary{decorations}
		\usetikzlibrary{snakes}
		
		\begin{scope}[xscale = 1.5*2, yscale = 1.25*1.35, rotate = -45]
		

		
		\foreach \one in {0}
		\foreach \two in {0,2,4}
		\foreach \three in {0,2,4}
		\foreach \four in {0,2,4}
		\draw[ultra thin] (\one + \two/5 + \three/25 + \four/125, 1) arc (180:0: 1.5 -2/625 - \one - \two/5 - \three/25 - \four/125);

		\foreach \one in {0}
		\foreach \two in {0,2,4}
		\foreach \three in {0,2,4}
		\foreach \four in {0,2,4}
		\draw[ultra thin] (5 - 1/125- \one - \two/5 - \three/25 - \four/125, 0) arc (0:-180: 1.5 -2/625 - \one - \two/5 - \three/25 - \four/125 -1/625);

		\foreach \one in {4}
		\foreach \two in {0}
		\foreach \three in {0,2,4}
		\foreach \four in {0,2,4}
		\draw[ultra thin] (\one + \two/5 + \three/25 + \four/125, 1) arc (180:0: 1/5 + 2/25 + 2/125 - \three/25 - \four/125);

		\foreach \one in {4}
		\foreach \two in {0}
		\foreach \three in {0,2,4}
		\foreach \four in {0,2,4}
		\draw[ultra thin] (5 - 1/125 - \one - \two/5 - \three/25 - \four/125, 0) arc (0:-180: 1/5 + 2/25 + 2/125 - \three/25 - \four/125);

		\foreach \one in {4}
		\foreach \two in {4}
		\foreach \three in {0}
		\foreach \four in {0,2,4}
		\draw[ultra thin] (\one + \two/5 + \three/25 + \four/125, 1) arc (180:0: 1/25 + 2/125 + 1/625 - \three/25 - \four/125 - 1/625);

		\foreach \one in {4}
		\foreach \two in {4}
		\foreach \three in {0}
		\foreach \four in {0,2,4}
		\draw[ultra thin] (5 - 1/125 -\one - \two/5 - \three/25 - \four/125, 0) arc (0:-180: 1/25 + 2/125 + 1/625 - \three/25 - \four/125 -1/625);
		
		\draw[ultra thin] (5- 1/25,1) arc  (180:0:1/125);

		\draw[ thick, magenta, line cap = round] (5,1) -- (5 -0.8, 1 +0.8) ;

		\draw[ thick, magenta, line cap = round] (5,0) -- (5 -0.8,  0.8) ;

		\foreach \x in {1/5-3/25,1/5 , 4+4/5, 4+4/5+3/25}
		{
			\def \y {\x/3 -2/3};
			\def \px {\x - 0.3*\x};
			\def \py {\y + 0.3*\x};
			\draw[thick,orange!70!black!70!, decorate, decoration={snake,amplitude=0.8 mm,segment length=0.6 cm,post length=2mm, pre length =2mm}] (\x-0.8,1.8) -- (\x,1);
			\draw[thick, orange!70!black!70!, decorate, decoration={snake,amplitude=0.8 mm,segment length=0.6 cm,post length=2mm, pre length =2mm}] (\x-0.8,0.8) -- (\x,0);
			
		}
		\foreach \x in { 2/5, 3/5,4/5,1,2,3,2+1/5,2+2/5,2+3/5,2+4/5, 4, 4+1/5,4+2/5,4+3/5 }
		{
			\def \y {\x/3 -2/3};
			\def \px {\x - 0.3*\x};
			\def \py {\y + 0.3*\x};
			\draw[thick,green!70!black!70!, decorate, decoration={snake,amplitude=0.8 mm,segment length=0.6 cm,post length=2mm, pre length =2mm}] (\x-0.8,1.8) -- (\x,1);
			\draw[thick, green!70!black!70!, decorate, decoration={snake,amplitude=0.8 mm,segment length=0.6 cm,post length=2mm, pre length =2mm}] (\x-0.8,0.8) -- (\x,0);
			
		}
		
		\foreach \x in {  2,3,2+1/5,2+2/5,2+3/5,2+4/5   }
		{
			\def \y {\x/3 -2/3};
			\def \px {\x - 0.3*\x};
			\def \py {\y + 0.3*\x};
			\draw[thick,blue!70!black!70!, decorate, decoration={snake,amplitude=0.8 mm,segment length=0.6 cm,post length=2mm, pre length =2mm}] (\x-0.8,1.8) -- (\x,1);
			\draw[thick, blue!70!black!70!, decorate, decoration={snake,amplitude=0.8 mm,segment length=0.6 cm,post length=2mm, pre length =2mm}] (\x-0.8,0.8) -- (\x,0);
			
		}
		
		\draw[blue!70!black!70!, dotted,thick ] (1.65,0.4)--(2.6 ,0.4);
		
		\draw[blue!70!black!70!, dotted,thick ] (1.65,1.4)--(2.6 ,1.4);

		\draw[green!70!black!70!, dotted,thick ] (3.65,0.4)--(4.25 ,0.4);
		
		\draw[green!70!black!70!, dotted,thick ] (3.65,1.4)--(4.25  ,1.4);

		\draw[green!70!black!70!, dotted,thick ] (0.05,0.4)--(0.65 ,0.4);
		
		\draw[green!70!black!70!, dotted,thick ] (0.05,1.4)--(0.65  ,1.4);

		\draw[orange!70!black!70!, dotted,thick ] (-0.25,0.4)--(-0.15 ,0.4);

		\draw[orange!70!black!70!, dotted,thick ] (-0.25,1.4)--(-0.15 ,1.4);

		\draw[orange!70!black!70!, dotted,thick ] (4.45 ,0.4)--(4.55 ,0.4);

		\draw[orange!70!black!70!, dotted,thick ] (4.45,1.4)--(4.55 ,1.4);

		\foreach \x in {3,4.6, 4.92}
		{ \def \y {\x/3 -2/3};
			\def \px {\x - 0.3*\x};
			\def \py {\y + 0.3*\x};
			
		}

		\draw[ thick, magenta, line cap = round] (0,0) -- (-0.8, 0.8) ;
		
		\draw[ thick, magenta, line cap = round] (0,1) -- (-0.8, 1+0.8) ;

		\node[right,blue!50!black!90!] at (2.75,.5) {  $\wt K_1 $};
		
		\node[right,green!50!black!90!] at (3.25,1 ) {  $\wt K_2 $};
		
		\node[right,orange!70!black!90!] at (4.75,0.5 ) {  $\wt K_3 $};

		\node[right, magenta] at (5.25,0.5  ){  $\wt K_\infty  $};

		\node[right,green!50!black!90!] at (.75,0.5 ) {  $\wt P_1 $};

		\node[right,orange!70!black!90!] at (-.75,1 ) {  $\wt P_2$};

		\node[right, black!20!magenta] at (-1.25 , 1  ) {  $\wt P_\infty $};
		
		\node[right ] at (0.25 ,2.5) {  $Q$};
		
		\node[right ] at (3 ,-1.75) {  $R$};

		\node[circle, fill = orange, minimum width = 0.125cm , inner sep = 0cm] at (-0.8,1.8) {};

		\node[right] at  (-0.8,1.9) {$(0,1,c_\B)$};
		
		\node[circle, fill = orange, minimum width = 0.125cm , inner sep = 0cm] at (3-0.8,1.8) {};

		\node[right] at  (2.7,2) {$(\sfrac{3}{5},1,q^1_\B)$};
		
		\node[ circle, fill = orange, minimum width = 0.125cm , inner sep = 0cm] at (2 ,0 ) { };

		\node[left ] at (1.85 ,0.1) {$(\sfrac{2}{5},-1,a_\B)$};
		\end{scope}
		
		\end{tikzpicture}
		\caption[width = 50cm]{Schematic for   $\overline M$. For example the set $\wt K_1$ is a family of copies of $[a_\B,q^1_\B]$ attached to either half of the bucket handle at the endpoint $a_\B$ with the endpoints $q^1_\B$ free.
		}
	\end{figure}

	To obtain $X(\B+1)$ first make for each $n \in \NN$ and $k \in \bigcup P_n$ \mbox{the identification} $(k,-1,p^n_\B) \sim $ $(k,1,p^n_\B)$
	and for each $k \in \bigcup K_n$ make the identification $(k,-1,q^n_\B) \sim (k,1,q^n_\B)$.
	Then for each $x \in [c_\B,a_\B]$ make the identification $(0,-1,x) \sim (0,1,x)$
	and for each $x \in X(\B)$ make the identification $(1,-1,x) \sim (1,1,x)$.
	It is straightforward to verify the decomposition is upper semicontinuous.
	Hence $X(\B+1)$ is a continuum and \cite{nadlerbook} Lemma 3.2 says $X(\B+1)$ is metric.

	\begin{figure}[!h]
		\centering
		\begin{tikzpicture}[thick,scale=0.74 ]

		\usetikzlibrary{decorations}
		\usetikzlibrary{snakes}
		
		\begin{scope}[xscale = 3, yscale = 1.687, rotate = -45]


		\foreach \one in {2,4}
		\foreach \two in {0,2,4}
		\foreach \three in {0,2,4}
		\foreach \four in {0,2,4}
		{\def \x {\one + \two/5 + \three/25 + \four/125};
			\def \y {\one/3 + \two/15 + \three/75 + \four/375 -2/3};
			\draw[ultra thin]  (\x, 0) -- (\x, \y);
		}
		
		\foreach \one in {0}
		\foreach \two in {0,2,4}
		\foreach \three in {0,2,4}
		\foreach \four in {0,2,4}
		\draw[ultra thin] (\one + \two/5 + \three/25 + \four/125, 1) arc (180:0: 1.5 -2/625 - \one - \two/5 - \three/25 - \four/125);

		\foreach \one in {0}
		\foreach \two in {0,2,4}
		\foreach \three in {0,2,4}
		\foreach \four in {0,2,4}
		\draw[ultra thin]  (5 - 1/125- \one - \two/5 - \three/25 - \four/125, 0) arc (0:-180: 1.5 -2/625 - \one - \two/5 - \three/25 - \four/125 -1/625);

		\foreach \one in {4}
		\foreach \two in {0}
		\foreach \three in {0,2,4}
		\foreach \four in {0,2,4}
		\draw[ultra thin]  (\one + \two/5 + \three/25 + \four/125, 1) arc (180:0: 1/5 + 2/25 + 2/125 - \three/25 - \four/125);

		\foreach \one in {4}
		\foreach \two in {0}
		\foreach \three in {0,2,4}
		\foreach \four in {0,2,4}
		\draw[ultra thin]  (5 - 1/125 - \one - \two/5 - \three/25 - \four/125, 0) arc (0:-180: 1/5 + 2/25 + 2/125 - \three/25 - \four/125);

		\foreach \one in {4}
		\foreach \two in {4}
		\foreach \three in {0}
		\foreach \four in {0,2,4}
		\draw[ultra thin]  (\one + \two/5 + \three/25 + \four/125, 1) arc (180:0: 1/25 + 2/125 + 1/625 - \three/25 - \four/125 - 1/625);

		\foreach \one in {4}
		\foreach \two in {4}
		\foreach \three in {0}
		\foreach \four in {0,2,4}
		\draw[ultra thin]  (5 - 1/125 -\one - \two/5 - \three/25 - \four/125, 0) arc (0:-180: 1/25 + 2/125 + 1/625 - \three/25 - \four/125 -1/625);
		
		\draw (5- 1/25,1) arc  (180:0:1/125);
		
		\foreach \one in {4}
		\foreach \two in {4}
		\foreach \three in {4}
		\foreach \four in {0}
		\draw[ultra thin]  (5 - 4/625 -\one - \two/5 - \three/25 - \four/125, 0) arc (0:-180: 1/125 + 1/625 );

		\node[circle, fill = orange, minimum width = 0.125cm , inner sep = 0cm] at (1.5,2.5) {}; 
		\node[above right] at (1.5,2.5)  {$a_{\B+1}$};
		
		\draw[fill, orange] (5,1) circle [radius=0.01];
		\node[right] at (5+0.2,.9) {$a_\B$};

		\draw[ultra thick, magenta, line cap = round] (5,1) -- (5 -0.3*5, 1 +0.3*5) ;
		\node[circle, fill = orange, minimum width = 0.125cm , inner sep = 0cm] at (5 -0.3*5, 1 +0.3*5)  {}; 
		\node[magenta] at (2.75,3.25) {$X(\B)$};
		
		\foreach \x in {2,3}
		{
			\def \y {\x/3 -2/3};
			\def \px {\x - 0.3*\x};
			\def \py {\y + 0.3*\x};
			\draw[blue!70!black!70!, decorate, decoration={snake,amplitude=0.8 mm,segment length=0.6 cm,post length=2mm, pre length =2mm}] (\x,1) -- (\px,\py);
			\draw[blue!70!black!70!, decorate, decoration={snake,amplitude=0.8 mm,segment length=0.6 cm,post length=2mm, pre length =2mm}] (\px,\py) -- (\x,\y);
		}
		
		\draw[dotted, blue!70!black!70!] (2-0.3*2, 2/3 -2/3 +0.3*2) -- (3-0.3*3, 3/3 -2/3 +0.3*3);

		\foreach \x in {4,4.6}
		{
			\def \y {\x/3 -2/3};
			\def \px {\x - 0.3*\x};
			\def \py {\y + 0.3*\x};
			\draw[blue!70!black!70!, decorate, decoration={snake,amplitude=0.4 mm,segment length=0.3 cm,post length=5mm, pre length =2mm}] (\x,1) -- (\px,\py);
			\draw[blue!70!black!70!, decorate, decoration={snake,amplitude=0.4 mm,segment length=0.3 cm,post length=2mm, pre length =5mm}] (\px,\py) -- (\x,\y);
		}

		\draw[dotted, blue!70!black!70!] (4-0.3*4, 4/3 -2/3 +0.3*4) -- (4.6-0.3*4.6, 4.6/3 -2/3 +0.3*4.6);
		
		\foreach \x in {4.8}
		{
			\def \y {\x/3 -2/3};
			\def \px {\x - 0.3*\x};
			\def \py {\y + 0.3*\x};
			\draw[blue!70!black!70!, decorate, decoration={snake,amplitude=0.1 mm,segment length=0.15 cm,post length=2mm, pre length =2mm}] (\x,1) -- (\px,\py);
			\draw[blue!70!black!70!, decorate, decoration={snake,amplitude=0.1 mm,segment length=0.15 cm,post length=2mm, pre length =2mm}] (\px,\py) -- (\x,\y);
		}

		\foreach \x in {4.92}
		{
			\def \y {\x/3 -2/3};
			\def \px {\x - 0.3*\x};
			\def \py {\y + 0.3*\x};
			\draw[blue!70!black!70!, decorate, decoration={snake,amplitude=0.075 mm,segment length=0.1 cm,post length=2mm, pre length =2mm}] (\x,1) -- (\px,\py);
			\draw[blue!70!black!70!, decorate, decoration={snake,amplitude=0.075 mm,segment length=0.1 cm,post length=2mm, pre length =2mm}] (\px,\py) -- (\x,\y);
		}

		\draw[dotted, blue!70!black!70!] (4.8-0.3*4.8, 4.8/3 -2/3 +0.3*4.8) -- (4.92-0.3*4.92, 4.92/3 -2/3 +0.3*4.92);

		\draw[dashed] (3-0.3*3,3/3- 2/3 +0.3*3) -- (4.75,3/3 - 2/3 +0.3*3);  
		\node[circle, fill = orange, minimum width = 0.125cm , inner sep = 0cm] at (4.775,3/3 - 2/3 +0.3*3) {};
		\node[right] at (4.85,3/3 - 2/3 +0.3*3) {$q^1_{\B}$};

		\draw[dashed] (4.6-0.3*4.6,4.6/3- 2/3 +4.6*0.3) -- (3.75,4.6/3 - 2/3 +0.3*4.6);  
		\node[circle, fill = orange, minimum width = 0.125cm , inner sep = 0cm] at (3.76,4.6/3 - 2/3 +0.3*4.6) {};
		\node[right] at (3.76+0.1,4.6/3 - 2/3 +0.3*4.6) {$q^2_{\B}$};
		
		\node[circle, fill = orange, minimum width = 0.125cm , inner sep = 0cm] at (5,1) {};
		
		\foreach \x in {3,4.6, 4.92}
		{ \def \y {\x/3 -2/3};
			\def \px {\x - 0.3*\x};
			\def \py {\y + 0.3*\x};
			\node[circle, fill = orange, minimum width = 0.125cm , inner sep = 0cm] at (\px,\py) {};
		}

		\node[above left] at (2.55 ,0.8) {$q^1_{\B+1}$};
		\node[above left] at (4.6-0.3*4.6 -0.1,4.6/3 -2/3 +0.3*4.6) {$q^2_{\B+1}$};
		\node[above] at (4.92-0.3*4.92 -0.1,4.92/3 -2/3 +0.3*4.92) {$q^3_{\B+1}$};

		\foreach \x in {1,0.4} 
		{ \def \y {\x};
			\def \px {\x -1 + 0.7*\x};
			\def \py {+1 -0.7*\x};
			\draw[blue!70!black!70!, decorate, decoration={snake,amplitude=0.8 mm,segment length=0.6 cm,post length=0mm, pre length =0mm}] (\x,0) -- (\px,\py); 
			\draw[blue!70!black!70!, decorate, decoration={snake,amplitude=1 mm,segment length=0.6cm,post length=1mm, pre length =1mm}] (\x,\y) -- (\px,\py); 
		}
		
		\draw[dotted, blue!70!black!70!] (1-1+0.7*1, 1-0.7*1) -- (0.4-1+0.7*0.4, 1-0.7*0.4);

		\foreach \x in {0.20} 
		{ \def \y {\x};
			\def \px {\x -1 + 0.7*\x};
			\def \py {+1 -0.7*\x};
			\draw[blue!70!black!70!, decorate, decoration={snake,amplitude=0.2 mm,segment length=0.2 cm,post length=1mm, pre length =1mm}] (\x,0) -- (\px,\py); 
			\draw[blue!70!black!70!, decorate, decoration={snake,amplitude=0.3 mm,segment length=0.2 cm,post length=1mm, pre length =1mm}] (\x,\y) -- (\px,\py); 
		}

		\foreach \x in {0.08} 
		{ \def \y {\x};
			\def \px {\x -1 + 0.7*\x};
			\def \py {+1 -0.7*\x};
			\draw[blue!70!black!70!, decorate, decoration={snake,amplitude=0.1 mm,segment length=0.1 cm,post length=1mm, pre length =1mm}] (\x,0) -- (\px,\py); 
			\draw[blue!70!black!70!, decorate, decoration={snake,amplitude=0.1 mm,segment length=0.1 cm,post length=1mm, pre length =1mm}] (\x,\y) -- (\px,\py); 
		}

		\draw[dotted, blue!70!black!70!] (0.20-1+0.7*0.20, 1-0.7*0.20) -- (0.08-1+0.7*0.08, 1-0.7*0.08);

		\node[above left] at (4.6-0.3*4.6 -0.1,4.6/3 -2/3 +0.3*4.6) {$q^2_{\B+1}$};
		
		\draw[ultra thick, magenta, line cap = round] (0,0) -- (-1, 1) ;
		\node[circle, fill = orange, minimum width = 0.125cm , inner sep = 0cm] at (-1, 1) {}; 
		\node[above ] at (-1,1) {$c_{\B+1}$};
		\node[above left] at (-0.7,0.7) {$p^2_{\B+1}$};
		\node[above left] at (-0.2,0.2) {$p^1_{\B+1}$};
		
		\draw[dashed] (0.7,0.3) -- (-0.325,0.3);
		\node[circle, fill = orange, minimum width = 0.125cm , inner sep = 0cm] at (-0.3,0.3) {};
		\node[circle, fill = orange, minimum width = 0.125cm , inner sep = 0cm] at (0.7,0.3) {};
		
		\draw[dashed] (0.2 -1 +0.7*0.2,1 -0.7*0.2) -- (-0.85,1 -0.7*0.2);  
		\node[circle, fill = orange, minimum width = 0.125cm , inner sep = 0cm] at (-0.86,1 -0.7*0.2) {};
		\node[circle, fill = orange, minimum width = 0.125cm , inner sep = 0cm] at (0.2 -1 +0.7*0.2,1 -0.7*0.2)  {};
		
		\node[above left,magenta] at (-0.15,1.5) {$[[c_\B,a_\B]]$};
		
		\node[above left] at (1.9,1.1) {$B_+(\sfrac{2}{5})$};
		
		\node[above left] at (1.9,0) {$B_-(\sfrac{2}{5})$};

		\foreach \one in {0}
		\foreach \two in {0,2,4}
		\foreach \three in {0,2,4}
		\foreach \four in {0,2,4}
		{\def \x {\one + \two/5 + \three/25 + \four/125};
			\draw[ultra thin]  (\x, 1) -- (\x, \x);
		}

		\node[circle, fill = orange, minimum width = 0.125cm , inner sep = 0cm] at (3,1) {};
		\node[below right] at (3,0.8) {$k_+(\sfrac{3}{5})$};
		\node[circle, fill = orange, minimum width = 0.125cm , inner sep = 0cm] at (3,1/3) {};
		\node[below right] at (3,0.1) {{$k_-(\sfrac{3}{5})$}};
		
		\end{scope}
		
		\end{tikzpicture}
		\caption{ The continuum $X(\B+1)$ is obtained by identifying the free endpoints of opposite pairs of subcontinua, and by identifying the two copies of $[c_\B,a_\B]$ and $X(\B)$ that make up $\widetilde P_\infty$ and $\widetilde K_\infty$ respectively. Dashed lines indicate projection onto $X(\B)$ or $[[c_\B,a_\B]]$. \label{NWPFigure(2)include} }
	\end{figure}
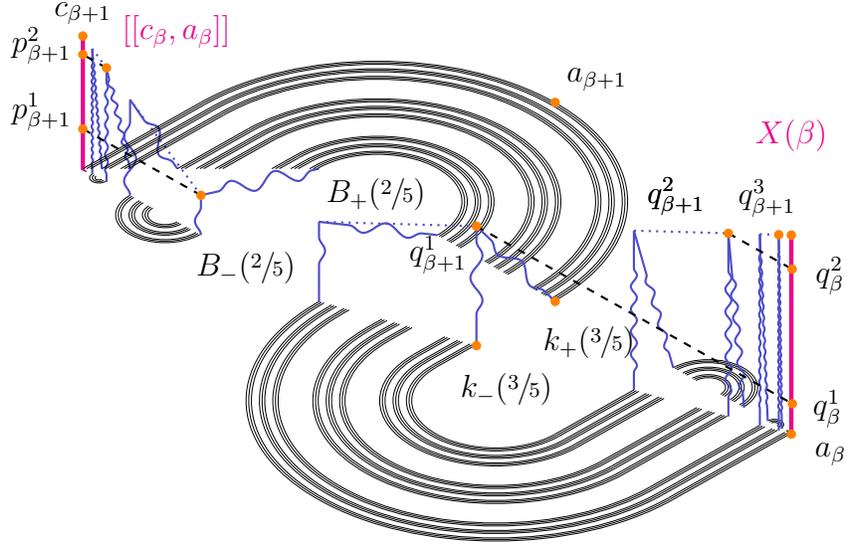

	For $k \in K$ write $B(k)$ for the quotient space of $\{(x,y,z) \in M : x=k \}$.
	For $k \notin \{0,1\}$ clearly $B(k)$ is homeomorphic to two copies of some $[p^n_\B, a_\B] $ or $[a_\B, q^n_\B]$ 
	joined at the points corresponding to $p^n_\B$ or $q^n_\B$ respectively.
	Hence $B(k)$ is a continuum irreducible from $(k,-1,a_\B)$ to $(k,1,a_\B)$.
	$B(1)$ is a copy of $X(\B)$.
	Henceforth identify that copy with $X(\B)$.
	$B(0)$ is a copy of $[c_\B,a_\B]$.
	Denote that copy by $[[c_\B,a_\B]]$ and write $\tl x \in [[c_\B,a_\B]]$ for the point corresponding to $x \in [c_\B,a_\B]$.
	
	Define a surjection $g: X(\B+1) \to B$ onto the buckethandle.
	
	\begin{center}
		$g(x,y,z) =
		\left\{
		\begin{array}{ll}
		(x,y -1) & \mbox{for } x \in Q \vspace{2mm} \\
		(x , y+1) & \mbox{for } x \in R \vspace{2mm}\\
		(k,0) & \mbox{for } x \in B(k) \vspace{2mm}\\
		(1,0) & \mbox{for } x \in X(\B) \vspace{2mm}\\
		(0,0) & \mbox{for } x \in [[c_\B,a_\B]]
		\end{array} 
		\right. { }$ 
	\end{center}
	
	For each $x \in B$ the fibre $g^{-1}(x)$ is either a singleton, $X(\B)$, $[c_\B,a_\B]$ or some $B(k)$.
	Thus all fibres are subcontinua and $g$ is monotone.
	Therefore subcontinua pull back to subcontinua.
	In particular $g^{-1}(B) = X(\B+1)$ is a continuum.
	Define the points $c_{\B+1} = \tl{c_\B}$ and $a_{\B+1} = g^{-1}(a_0)$ and each $p^n_{\B+1} = \wt{p^n_{\B}}$. The definition of $q^n_{\B+1}$ will be given later in the construction.

	\begin{figure}[!h]
		\centering \scalebox{0.85}{
			\begin{tikzpicture}

			\usetikzlibrary{snakes}
			
			\begin{scope}[xscale = 2, yscale = 2]

			
			\draw[blue!70!black!100!white!65!, line width = 2cm] (1/2, 1) arc (180:0: 1);
			\draw[white, line width = 0.4 cm, line cap = round] (3/10,1) arc (180:0: 1.5 -3/10);
			\draw[white, line width = 0.4 cm, line cap = round] (7/10,1) arc (180:0: 1.5 -7/10);
			
			\draw[blue!70!black!100!white!65!!, line width = 0.4 cm] (5-9/10,1) arc (180:0: 1/2 -3/10);
			\draw[white, line width = 0.2 cm] (5-3/20,1) arc (180:0: 1/20);
			\draw[orange!80!white!100!, line width = 0.2 cm] (5-3/20,1) arc (180:0: 1/20);

			\draw[blue!70!black!100!white!65!, line width = 2cm] (5-1/2, -0.25) arc (0:-180: 1);
			\draw[white, line width = 0.4 cm, line cap = round] (5-3/10,-0.25) arc (0:-180: 1.5 -3/10);
			\draw[white, line width = 0.4 cm, line cap = round] (5-7/10,-0.25) arc (0:-180: 1.5 -7/10);
			
			\draw[blue!70!black!100!white!65!, line width = 0.4 cm] (9/10,-0.25) arc (0:-180: 1/2 -3/10);
			\draw[blue!70!black!100!white!65!, line width = 0.2 cm] (3/20,-0.25) arc (0:-180: 1/20);

			\draw[white, line width = 0.5 cm] (5-1/10,-0.25) arc (0:-180: 14/10);
			\draw[orange!80!white!85!, line width = 0.4 cm] (5-1/10,-0.25) arc (0:-180: 14/10);
			
			\foreach \one in {0,2,4}
			\foreach \two in {0,2,4}
			\foreach \three in {0,2,4}
			\foreach \four in {0,2,4}
			{\def \x {\one + \two/5 + \three/25 + \four/125 + 1/625};
				\draw [blue!70!black!100!, decorate, decoration={snake,amplitude=1  mm,segment length=1 cm,post length=0.5mm, pre length =0.5mm}] (\x,-0.25) -- (\x,1);
			}

			\foreach \one in {4}
			\foreach \two in {4}
			\foreach \three in {0,2,4}
			\foreach \four in {0,2,4}
			{\def \x {\one + \two/5 + \three/25 + \four/125 + 1/625};
				\draw [line width = 1mm, white, decorate, decoration={snake,amplitude=1  mm,segment length=1 cm,post length=0.5mm, pre length =0.5mm}] (\x,-0.25) -- (\x,1);
			} 
			
			\foreach \one in {4}
			\foreach \two in {4}
			\foreach \three in {0,2,4}
			\foreach \four in {0,2,4}
			{\def \x {\one + \two/5 + \three/25 + \four/125 + 1/625};
				\draw [orange!80!white!85!, decorate, decoration={snake,amplitude=1  mm,segment length=1 cm,post length=0.5mm, pre length =0.5mm}] (\x,-0.25) -- (\x,1);
			}

			\foreach \one in {2}
			\foreach \two in {0}
			\foreach \three in {0,2,4}
			\foreach \four in {0,2,4}
			{\def \x {\one + \two/5 + \three/25 + \four/125 + 1/625};
				\draw [line width = 1mm, white, decorate, decoration={snake,amplitude=1  mm,segment length=1 cm,post length=0.5mm, pre length =0.5mm}] (\x,-0.25) -- (\x,1);
			}

			\foreach \one in {2}
			\foreach \two in {0}
			\foreach \three in {0,2,4}
			\foreach \four in {0,2,4}
			{\def \x {\one + \two/5 + \three/25 + \four/125 + 1/625};
				\draw [blue!90!black!100!white!35!, decorate, decoration={snake,amplitude=1  mm,segment length=1 cm,post length=0.5mm, pre length =0.5mm}] (\x,-0.25) -- (\x,1);
			}

			

			\fill[ line width = 0cm, white , fill opacity = 0.5] (2.3,0.22+ 0.4) -- (2.3,-0.22 + 0.4) -- (1.86,-0.22 + 0.4) -- (1.86,0.22+ 0.4) -- cycle;
			\draw[gray][line width = 0.02cm, black!55!, dashed] (2.3,0.22+ 0.4) -- (2.3,-0.22+ 0.4) -- (1.86,-0.22 + 0.4) -- (1.86,0.22+ 0.4) -- cycle;
			
			\node[left] at (1.85,0+ 0.4) {$I \times U$};
			\node[left] at (1.9,0.9) {$I \times B_1$};
			\node[left] at (1.9,-0.75+ 0.6) {$I \times B_2$};
			\node[left] at (.5,2.3) {$A_1$};
			\node[right] at (4.5,-1.7) {$A_2$};

			\node[left] at (-0.1,0+ 0.4) {$[[c_\B,a_\B]]$};
			
			\node[right] at (5.1,0+ 0.4) {$X(\B)$};
			
			\draw[magenta, ultra thick, decorate, decoration={snake,amplitude=1 mm,segment length=1 cm,post length=0.5mm, pre length =0.5mm}] (0+1/125,-0.25) -- (0+1/125,1);

			\draw[magenta, ultra thick, decorate, decoration={snake,amplitude=1  mm,segment length=1 cm,post length=0.5mm, pre length =0.5mm}] (5-1/125,-0.25) -- (5-1/125,1);
			
			
			\end{scope}
			
			\end{tikzpicture}}
		\caption{Schematic of Claim \ref{propermap} for $I$ an interval of $K(2)$}
	\end{figure}
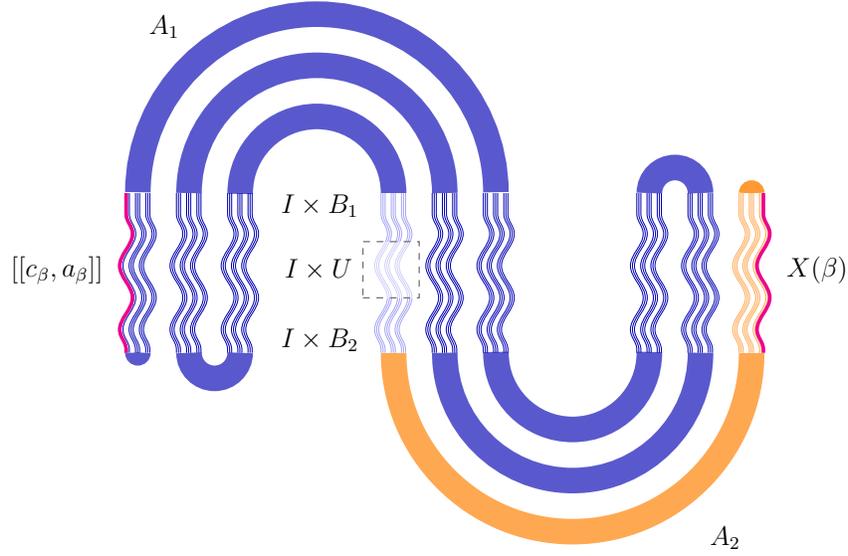

	\begin{claim}\label{propermap}
		The function $g:X(\B+1) \to B$ is proper.
	\end{claim}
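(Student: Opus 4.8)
The plan is to prove the equivalent statement that every subcontinuum $L\subseteq X(\B+1)$ with $g(L)=B$ equals $X(\B+1)$.

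The first point is that $g$ is one-to-one over $B\setminus K$: the $g$-fibre of a point lying on an open semicircular arc of the buckethandle is a single point of $Q\cup R$. Hence $g(L)=B$ already forces $L\supseteq g^{-1}(B\setminus K)$, and since $L$ is closed, $L\supseteq \widehat B:=\overline{g^{-1}(B\setminus K)}$. I would next identify $\widehat B$ as the union of all the semicircular arcs of $Q\cup R$ together with their endpoints, and observe that these endpoints are exactly the points $(k,-1,a_\B)$ and $(k,1,a_\B)$ for $k\in K$; thus $L$ contains the two points at which each $B(k)$ with $k\ne 0,1$ is attached to the rest of $X(\B+1)$, as well as $\tl{a_\B}\in[[c_\B,a_\B]]$ and $a_\B\in X(\B)$. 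The limits needed here hold because the semicircular arc of $B$ ending at a point $k\in K$ converges onto the point of $\widehat B$ over $k$.

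Write $\cK=\{k\in K: B(k)\subseteq L\}$. I would show $\cK$ is closed in $K$ by a ``parallel copies'' argument: as $k$ ranges over a fixed $P_n$ (resp.\ $K_n$) the sets $B(k)$ are pairwise disjoint copies of one fixed continuum attached along $P_n$ (resp.\ $K_n$), so if $L$ contains $B(k)$ for a dense set of such $k$ then $L$, being closed, contains $B(k)$ for all $k$ in the closure; the boundary cases $k=0,1$ follow from tail properties (4),(5),(6) together with the density in $X(\B)$ of the composant $\K(a_\B)$, which give $\overline{\bigcup_n[p^n_\B,a_\B]}=[c_\B,a_\B]$ and $\overline{\bigcup_n[a_\B,q^n_\B]}=X(\B)$. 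Since $\cK$ is closed, it suffices to prove $\cK$ is dense in $K$; then $\cK=K$, so $L\supseteq g^{-1}(K)$, and combining with $L\supseteq\widehat B\supseteq g^{-1}(B\setminus K)$ we get $L= g^{-1}(B)=X(\B+1)$.

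It remains to show $B(k_0)\subseteq L$ for every $k_0\in K\setminus\{0,1\}$. By the first part $L$ contains the two endpoints $u=(k_0,-1,a_\B)$ and $v=(k_0,1,a_\B)$ of $B(k_0)$, and $B(k_0)$ is irreducible from $u$ to $v$; hence if $L\not\supseteq B(k_0)$ then $L\cap B(k_0)$ is a proper closed subset of an irreducible continuum containing both its endpoints, so it is disconnected with $u$ and $v$ in different components. The task, and the main obstacle, is to turn this into a contradiction with connectedness of $L$. The point blocking one from ``routing around'' the gap in $B(k_0)$ is that $B$ is hereditarily unicoherent, hence contains no simple closed curve, so the chaining of the semicircular arcs of $\widehat B$ --- recorded by the pieces $B(k)$ together with $[[c_\B,a_\B]]$ and $X(\B)$ --- is tree-like; in this situation the separation $L\cap B(k_0)=F_u\sqcup F_v$ propagates, through the semicircular arcs carrying $u$ and $v$ and the closedness already established, to a separation of all of $L$. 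Granting this, $\cK\supseteq K\setminus\{0,1\}$ is dense, so $\cK=K$ and $L=X(\B+1)$, proving $g$ proper.
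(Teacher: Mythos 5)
There is a genuine gap, and it sits exactly where you flag ``the main obstacle.'' Your reduction to showing $B(k_0)\subseteq L$ for each $k_0\in K-\{0,1\}$ is fine, and your opening observation that $L$ must contain the closure of $g^{-1}(B-K)$, hence all the arcs of $Q\cup R$ together with the attachment points $(k,\pm1,a_\B)$, agrees with the paper's first step. But the asserted propagation of the separation $L\cap B(k_0)=F_u\sqcup F_v$ to a separation of $L$ is not just unproved --- the mechanism you offer for it (hereditary unicoherence of $B$, absence of simple closed curves, tree-like chaining of the pieces) does not address the real difficulty. The fibre $B(k_0)=g^{-1}(k_0)$ is nowhere dense in $X(\B+1)$: a point $w\in F_u$ lying in the interior of $B(k_0)$ is a limit of points $(k,\pm1,x)\in L\cap B(k)$ with $k\to k_0$ in $K_n$ or $P_n$, and those points need not lie in $F_u$. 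So $F_u$ is closed but in general not relatively open in $L$, and no separation of $L$ results from a ``gap'' confined to a single fibre. The obstruction is transverse to the fibres, not a matter of routing around through the arc structure.

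The paper's proof is built precisely to evade this. Instead of arguing fibre by fibre, it uses that $X(\B+1)-L$ is \emph{open}, hence contains a basic box $V=I\times\{1\}\times U$ over a whole clopen interval $I$ of some stage $K(n)$ of the Cantor set, with $a_\B\notin U$. Removing an open box over all of $I$ at once splits $\bigcup\{B(k):k\in I\}-V$ into two genuinely clopen pieces $I\times B_1$ and $I\times B_2$, and the auxiliary continua $Y(n)$ (with thickened connecting arcs over the intervals of $K(n)$) are introduced so that these pieces extend to a clopen partition $\big((I\times B_1)\cup A_1\big)\cup\big((I\times B_2)\cup A_2\big)$ of $Y(n)-V\supseteq L$. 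Since $L$ contains the interiors of $Q\cup R$ it meets both pieces, contradicting connectedness. If you want to salvage your fibre-by-fibre formulation you would in any case have to pass from ``$L$ misses a point of $B(k_0)$'' to ``$L$ misses an open box over a clopen interval containing $k_0$,'' at which point you are running the paper's argument; the separation still has to be exhibited explicitly, and that is the content you have left unproved.
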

	
	\begin{proof}
		
		Identify the plane $\RR^2$ with the subspace $\RR^2 \times \{a_{\B}\}$ of $\RR^2 \times X(\B)$.
		For each $n \in \NN$ let $K(n)$ be the $n$th stage of construction of the quinary Cantor set.
		Let $Q(n)\subset \RR^2$ be the union of all upper semicircles with centre $(1- 7/10^m,1)$ for some $m \in \NN$ and endpoints in $K(n) \times \{1\}$.
		Let $R(n)\subset \RR^2$ be the union of all lower semicircles with centre $(7/10^m,-1)$ for some $m \in \NN$ and endpoints in $K(n) \times \{-1\}$.
		Define the continuum $Y(n) = Q(n) \cup R(n) \cup X(\B+1)$.
		
		Recall $K(n)$ consists of $3^n$ intervals of length $1/5^n$.
		For any such interval $I$ with $0,1 \notin I$
		it is a straightforward exercise to verify the closure of $Y(n) - \bigcup \{B(k): k \in I\}$ has exactly two components
		$A_1$ containing $I \times \{1\} \times \{a_\B\}$ and $A_2$ containing $I \times \{-1\} \times \{a_\B\}$.
		
		Suppose the subcontinuum $L \subset X(\B+1)$ has $g(L) = B$.
		Clearly $L$ contains the interior of $Q \cup R$.
		Now assume $L$ is proper. 
		Then $X(\B+1) - L$ contains a basic open subset of the quotient space of some
		\mbox{$P_m \times \{\pm 1\} \times [p^m_\B, a_\B]$} or  \mbox{$K_m \times \{\pm 1\} \times [a_\B, q^m_\B]$}.
		Without loss of generality that set is $V = I \times \{1\} \times U$ for some open  $I \subset K_m$ and $U \subset [a_\B, q^m_\B]$.
		We can shrink $I$ further to make it an interval of some $K(n)$ and shrink $U$ to get $a_\B \notin U$.
		
		The subcontinua $B(k)$ are homeomorphic for $k \in I$ 
		and $\bigcup \{ B(k): k \in I \}$ is just $I \times B(k)$.
		Treat $U$ as a subset of $B(k)$.
		Since $B(k)$ is irreducible from $(k,1,a_\B)$ to $(k,-1,a_\B)$ we know $B(k) - U$ is the disjoint union 
		$B_1 \cup B_2$ of two clopen sets that include $(k,1,a_\B)$ and $(k,-1,a_\B)$ respectivly.
		Hence  
		
		\begin{center}
			$\bigcup \{B(k):k \in I\} -V = (I \times B_1) \cup  (I \times B_2)$
		\end{center} 
		
		is the disjoint union of two clopen sets containing $I \times \{1\} \times \{a_\B\}$ and \mbox{$I \times \{-1\} \times \{a_\B\}$} respectively.
		It follows that
		
		\begin{center}
			$Y(m) -V = \big ( (I \times B_1) \cup A_1 \big ) \cup \big ( (I \times B_2) \cup A_2 \big )$
		\end{center} 
		
		is the disjoint union of two clopen sets containing $I \times \{1\} \times \{a_\B\}$ and \mbox{$I \times \{-1\} \times \{a_\B\}$} respectively.
		
		Recall $L$ is contained $X(\B+1)-V \subset Y(m) -V$.
		Since the closed set $L$ contains the interior of $Q \cup R$ 
		it meets both clopen sets which contradicts how $L$ is connected.
		We conclude $g(L) = B$ if and only if $L = X(\B+1)$.
	\end{proof}

	\begin{claim}\label{comps}
		The composants of $X(\B+1)$ are $\{g^{-1}(E): E \subset B$ is a composant$\}$.
		Hence $X(\B+1)$ is indecomposable.
	\end{claim}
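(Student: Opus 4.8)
The plan is to prove the sharper identity $\K_{X(\B+1)}(y) = g^{-1}\big(\K_B(g(y))\big)$ for every $y \in X(\B+1)$, from which both assertions of the claim follow immediately. The two inclusions are governed by the two properties of $g$ already established: properness (Claim \ref{propermap}) gives $\subset$, and monotonicity gives $\supset$.

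For $\K_{X(\B+1)}(y) \subset g^{-1}\big(\K_B(g(y))\big)$, let $K \subset X(\B+1)$ be a proper subcontinuum containing $y$. Properness of $g$ makes $g(K)$ a proper subcontinuum of $B$ containing $g(y)$, so $g(K) \subset \K_B(g(y))$, whence $K \subset g^{-1}(g(K)) \subset g^{-1}\big(\K_B(g(y))\big)$; taking the union over all such $K$ completes this direction. For the reverse inclusion, take $z$ with $g(z) \in \K_B(g(y))$ and choose a proper subcontinuum $N \subset B$ with $\{g(y),g(z)\} \subset N$. Monotonicity of $g$, together with Theorem 6.1.28 of \cite{Engelking}, makes $g^{-1}(N)$ a subcontinuum of $X(\B+1)$ containing both $y$ and $z$. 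Since $g$ is surjective, any $b \in B - N$ has $g^{-1}(b)$ nonempty and disjoint from $g^{-1}(N)$, so $g^{-1}(N)$ is proper; hence $z \in g^{-1}(N) \subset \K_{X(\B+1)}(y)$.

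Granting the identity, every composant of $X(\B+1)$ equals $g^{-1}(E)$ for the composant $E = \K_B(g(y))$ of $B$, and conversely each composant $E$ of $B$ is obtained this way by picking $x \in E$ and, using surjectivity, $y \in g^{-1}(x)$; thus the composants of $X(\B+1)$ are precisely the sets $g^{-1}(E)$ with $E$ a composant of $B$. Indecomposability then follows: a decomposition $X(\B+1) = K_1 \cup K_2$ into proper subcontinua would, by properness, push forward to $B = g(K_1) \cup g(K_2)$ with both $g(K_i)$ proper, contradicting indecomposability of the quinary buckethandle $B$ (equivalently, since each composant $E$ of $B$ is a proper subset, so is each $g^{-1}(E)$, and a continuum all of whose composants are proper is indecomposable). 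I do not anticipate a genuine obstacle; the only steps needing a little care are that the preimage of a proper subcontinuum is itself proper — this is exactly where surjectivity of $g$ is used — and the routine bookkeeping that $E \mapsto g^{-1}(E)$ sets up the correspondence between the two families of composants.
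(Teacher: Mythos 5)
Your proof is correct and uses the same two ingredients as the paper — properness of $g$ to keep images of proper subcontinua proper, and monotonicity plus surjectivity to pull proper subcontinua of $B$ back to proper subcontinua of $X(\B+1)$ — merely packaged as the explicit identity $\K_{X(\B+1)}(y) = g^{-1}\big(\K_B(g(y))\big)$ rather than the paper's two-step argument. This is essentially the same approach, so no further comparison is needed.
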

	
	\begin{proof}
		Since $g$ is monotone and proper each $g^{-1}(E)$ is contained in some composant of $X(\B+1)$.
		Now suppose the subcontinuum $L \subset X(\B+1)$ meets two distinct $g^{-1}(E_1)$ and $g^{-1}(E_2)$.
		It follows $g(L) = B$.
		Since $g$ is proper we must have $L = X(\B+1)$.
		The result follows.
	\end{proof}
	
	Claim \ref{comps} tells us $g^{-1}(C_0)$ and $g^{-1}(D_0)$ are distinct composants of \mbox{$X(\B+1)$}. 
	Define $C(\B+1) = g^{-1}(C_0)$ and $D(\B+1) = g^{-1}(D_0)$.
	From the construction $X(\B) \subset D(\B+1)$.
	The next claim follows.
	
	\begin{claim} 
		Condition (a) holds for all $\G,\D \le \B+1$.
	\end{claim}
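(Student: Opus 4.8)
The plan is to reduce at once to the single new instance $\G=\B+1$. Indeed Condition (a) for all pairs with $\G,\D\le\B$ is part of the inductive hypothesis at stage $\A=\B+1$ (it is assumed for all $\G,\D<\A$), so nothing needs to be re-checked there. Hence it suffices to prove
\[
\bigcup\{X(\D):\D<\B+1\}\;=\;\bigcup\{X(\D):\D\le\B\}\;\subset\;D(\B+1),
\]
that is, $X(\D)\subset D(\B+1)$ for every $\D\le\B$.

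First I would dispose of the case $\D=\B$. This is exactly the remark recorded immediately above the claim: under the identification $B(1)=X(\B)$ made in the construction, every point of $B(1)$ is sent by $g$ to $(1,0)=d_0\in D_0$, so $X(\B)=B(1)\subset g^{-1}(D_0)=D(\B+1)$.

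For $\D<\B$ I would invoke the inductive Condition (a) at stage $\B$ — legitimate since $\B<\A$ — to get $X(\D)\subset D(\B)$. Because $D(\B)$ is by object (i) a composant of $X(\B)$, it is in particular a subset of $X(\B)$, and combining this with the previous paragraph gives the chain $X(\D)\subset D(\B)\subset X(\B)\subset D(\B+1)$. Taking the union over all $\D\le\B$ yields the displayed inclusion, and with the already-assumed cases this establishes Condition (a) for all $\G,\D\le\B+1$.

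No genuine obstacle is expected; the statement is essentially a bookkeeping consequence of Claim \ref{comps} together with the fact that the copies $X(\D)\hookrightarrow X(\B)\hookrightarrow X(\B+1)$ are the literal ones fixed by the identifications in the construction (so the inclusions are honest set containments, not merely ``up to homeomorphism''). The only point requiring a little care is to keep the indices straight, applying the inductive hypothesis solely to ordinals strictly below $\A=\B+1$.
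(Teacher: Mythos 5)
Your proof is correct and follows the same route as the paper, which records only the key fact $X(\B)\subset D(\B+1)$ (immediate from $g(X(\B))=\{d_0\}\subset D_0$ and $D(\B+1)=g^{-1}(D_0)$) and leaves the rest implicit. Your write-up simply makes explicit the bookkeeping the paper omits: the reduction to the single new instance $\G=\B+1$ and the chain $X(\D)\subset D(\B)\subset X(\B)\subset D(\B+1)$ from the inductive hypothesis.
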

	
	Observe each subcontinuum of $B$ is contained in some union $J_1 \cup J_2 \cup \ldots \cup J_N$ of arcs where 
	$J_i = [a_i,b_i]$ are alternately contained in $Q'$ or $R'$
	and $J_i \cap J_j = \{b_n\} \iff \{i,j\} = \{n,n+1\}$ and $J_i \cap J_j = \0$ otherwise.
	Let each $J_i$ be identified with the corresponding arc in $Q$ or $R$.
	
	The monotone surjection $g$ witnesses how each subcontinuum $L \subset X(\B+1)$ is contained in some
	
	\begin{equation}
	J_1 \cup B(x^1) \cup J_2 \cup B(x^2) \cup \ldots \cup J_{N} \cup B(x^{N+1}) \cup J_{N+1} \tag{\dag}
	\end{equation}
	
	for each $J_n \cap B(x^n) = (x^n,\pm 1,a_\B)$ and $ B(x^{n}) \cap J_{n+1} = (x^n,\mp 1,a_\B)$ and all other intersections are empty.
	
	Observe each summand of $(\dag)$ is hereditarily unicoherent and irreducible between two endpoints $-$
	and meets the previous factor at exactly one endpoint and the next factor at the other endpoint.
	It follows by induction the union is hereditarily unicoherent.
	
	\begin{claim}
		$X(\B+1)$ is hereditarily unicoherent
	\end{claim}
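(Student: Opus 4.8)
The plan is to establish property~(I) from the list characterising hereditary unicoherence, namely that $A \cap B$ is connected for every pair of subcontinua $A, B \subset X(\B+1)$. Fix such $A$ and $B$. If $A \cap B = \0$ the intersection is trivially connected, so assume $A \cap B \ne \0$; then $A \cup B$ is a subcontinuum of $X(\B+1)$, since it is a nondegenerate compact connected subset of the Hausdorff space $X(\B+1)$.

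Now I would apply the structural description obtained in the paragraphs just above. Since $g$ is monotone and proper, every subcontinuum of $X(\B+1)$ — in particular $A \cup B$ — is contained in a finite chain $Z$ of the shape $(\dag)$, each summand of which is hereditarily unicoherent and irreducible between its two gluing endpoints, and meets its neighbours only at those endpoints. The end-to-end gluing argument (``it follows by induction the union is hereditarily unicoherent'') shows that $Z$ is a hereditarily unicoherent subcontinuum of $X(\B+1)$ containing both $A$ and $B$. Applying property~(I) inside $Z$, the set $A \cap B$ is connected. As $A$ and $B$ were arbitrary this proves $X(\B+1)$ is hereditarily unicoherent.

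The only delicate ingredient is the reduction of an arbitrary subcontinuum to a chain $(\dag)$, and that has already been dealt with in the preceding discussion: it uses that $g$ is monotone and proper (Claims~\ref{propermap} and \ref{comps}), the explicit form of the fibres $g^{-1}(x)$ as singletons, a copy of $X(\B)$, a copy of $[c_\B, a_\B]$, or an arc $B(k)$, and the elementary fact that each subcontinuum of the buckethandle $B$ sits inside a finite chain of alternating arcs from $Q'$ and $R'$. Given that description and the gluing lemma, the claim is a one-line corollary of property~(I) for the chain $Z$, so no real obstacle remains at this stage; the substantive work was front-loaded into the construction of $g$ and the chain decomposition.
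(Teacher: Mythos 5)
Your overall strategy is the same as the paper's: reduce to a chain of the form $(\dag)$ and use that each such chain is hereditarily unicoherent. However, there is a genuine gap at the step where you assert that \emph{every} subcontinuum of $X(\B+1)$ --- in particular $A \cup B$ --- is contained in a chain of shape $(\dag)$. That containment only holds for \emph{proper} subcontinua. A chain $(\dag)$ is a finite union of arcs and fibres and is therefore a proper subset of $X(\B+1)$, so it cannot contain $A \cup B$ in the case $A \cup B = X(\B+1)$. The underlying reduction goes through $g$: because $g$ is proper (Claim \ref{propermap}), $g(L)$ is a proper subcontinuum of the buckethandle and hence lies in a finite chain of arcs, and $L \subset g^{-1}$ of that chain; this argument is simply unavailable when $L$ is the whole space. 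Your proof only assumes $A \cap B \ne \0$ and concludes $A \cup B$ is a subcontinuum, which leaves open the possibility that $A$ and $B$ are proper subcontinua whose union is all of $X(\B+1)$.

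The missing step is precisely the one the paper supplies. First dispose of the trivial case where one of $A,B$ equals $X(\B+1)$ (then $A \cap B$ equals the other continuum, hence is connected). For $A,B$ both proper, invoke the indecomposability of $X(\B+1)$ --- already established in Claim \ref{comps} --- to conclude $A \cup B \ne X(\B+1)$, so that the chain decomposition $(\dag)$ applies to $A \cup B$. With that one sentence inserted your argument is complete and coincides with the paper's proof.
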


	\begin{proof}
		Suppose the proper subcontinua $L_1,L_2 \subset X(\B+1)$ have nonempty intersection. Since $X(\B+1)$ is indecomposable $L_1 \cup L_2 \ne X$. Hence $L_1 \cup L_2$ is contained in a union of the form $(\dag)$.
		Since the union is an hereditarily unicoherent subcontinuum $L_1 \cap L_2$ is connected.
	\end{proof}
	
	Let $\{y^0,y^1,y^2 \ldots\} \subset B$ be the set $C_0 \cap K$ linearly ordered by $x\le y \iff [c,x] \subset [c,y]$.
	We must have $y^0=c_0$.
	For each $n \in \NN$ write $y^n_\pm = (y^n,\pm 1,a_\B)$.
	Note for $n=0$ we have $y^n_+ = y^n_- $ as elements of $X(\B+1)$.
	For each $n>0$ write $B(n)$ instead of $B(y^n)$.
	Write $I(n)$ for the arc in $Q$ or $R$ corresponding to the arc $[y^n,y^{n+1}] \subset B$.
	By $(\dag)$ each subcontinuum of $C(\B+1)$ is contained in some 
	
	\begin{equation}
	[[c_\B,a_\B]] \cup I(0) \cup B(1) \cup \ldots \cup I(N-1) \cup B({N}) \cup I(N). \tag{\dag \dag}
	\end{equation}
	
	For example let $x \in B(n)$ and $y \in I(m)$ be arbitrary with $n<m$.
	It follows from hereditary unicoherence that
	
	\begin{equation}
	[x,y] = [x,y^n_{\pm}]_{B(n)} \cup I(n) \cup B(n+1) \cup \ldots \cup B(m) \cup [y^m_{\pm}, y]_{I(m)}\tag{\ddag}
	\end{equation}
	
	for exactly one of the four choices of $\pm$ indices.
	The other intervals of $[x,y]$ have a similar form based on whether each endpoint is in some $B(n)$ or $I(m)$.
	
	For each $n \in \NN$ write $k_\pm (n)= \big (k(n),\pm 1,a_\B \big )$.
	By definition $k(n)$ is the right endpoint of $K_n$.
	Thus we see $B\big (k(n)\big )$ is the quotient space of the set \mbox{$\{k(n)\} \times$ $ \{-1,1\} \times  [a_\B,q^n]$}.
	Write $B_\pm \big (k(n) \big )$ for the quotient space of the set \mbox{$\{k(n)\} \times $ $\{\pm1\} \times [a_\B,q^n]$}.
	Then the continua $B_+ \big (k(n)\big )$ and $B_-\big (k(n)\big )$ meet at the point
	$\big (k(n),-1, q^n \big ) \sim \big (k(n),1, q^n \big )$.
	Define each $q^n_{\B+1} \in X(\B+1)$ as that point.
	The expression $(\ddag)$ gives the equalities.

	\vspace{3mm}
	
	\begin{center}
		$[p^n_{\B+1}, q^n_{\B+1}] = [[p^n_\B,a_\B]] \cup I(0) \cup B(1) \cup \ldots \cup I(k(n)-1) \cup B_{+}(k(n))$ 
		
		\vspace{3mm}
		
		$[a_{\B+1}, q^n_{\B+1}] = [a_{\B+1}, k_+(1)] \cup B(1) \cup \ldots \cup I(k(n)-1) \cup B_{+}(k(n))$ 
		
		\vspace{3mm}
		
		$[c_{\B+1}, q^n_{\B+1}] = [[c_\B,a_\B]] \cup I(0) \cup B(1) \cup \ldots \cup I(k(n)-1) \cup B_{+}(k(n))$
		
		\vspace{3mm}
		
		$[p^n_{\B+1}, a_{\B+1}] = [[p^n_{\B}, a_{\B} ]] \cup [\, \wt a_{\B}, a_{\B+1}]$
		
		\vspace{3mm}
		
		$[c_{\B+1}, a_{\B+1}] = [[c_{\B}, a_{\B} ]] \cup [ \, \wt a_{\B}, a_{\B+1}]$
		
	\end{center}
	
	\begin{claim}
		$T^{\B+1} = \big ((p^n_{\B+1}),(q^n_{\B+1}) \big)$ is a tail from $a_{\B+1}$ to $c_{\B+1}$
	\end{claim}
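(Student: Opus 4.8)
I would verify the seven clauses of the tail definition for $T^{\B+1}=\big((p^n_{\B+1}),(q^n_{\B+1})\big)$ in turn. The tools are: the five interval identities displayed above; hereditary unicoherence of $X(\B+1)$ (just proved), so that (properties (III)--(IV)) each $[u,v]_{X(\B+1)}$ is the unique irreducible subcontinuum from $u$ to $v$ and may be computed inside any chain of the form $(\dag)$, $(\dag\dag)$, $(\ddag)$ joining $u$ to $v$; the inductive hypothesis that $T^\B$ is a tail on $X(\B)$; Claim~\ref{comps}, giving $\K(a_{\B+1})=g^{-1}(C_0)=C(\B+1)$; monotonicity and properness of $g$; and Condition (a) at level $\B+1$, i.e.\ $X(\B)\subset D(\B+1)$. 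First note that $a_{\B+1}=g^{-1}(a_0)\in g^{-1}(C_0)$, that $c_{\B+1}=\wt{c_\B}\in[[c_\B,a_\B]]\subset g^{-1}(C_0)$, and that each $p^n_{\B+1}$, $q^n_{\B+1}$ lies on a displayed chain contained in $g^{-1}(C_0)$; hence $c_{\B+1}\in\K(a_{\B+1})$ and both sequences lie in $\K(a_{\B+1})$, as the definition requires.

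Clauses (1)--(4) I would read off the displayed chains. For (1): the chains for $[a_{\B+1},q^n_{\B+1}]$, $[p^n_{\B+1},q^n_{\B+1}]$, $[c_{\B+1},q^n_{\B+1}]$ each contain the arc $I(0)$, and $a_{\B+1}$ lies on $I(0)$ (as $g(a_{\B+1})=a_0\ne c_0$ and $I(0)$ is the only summand before $B(1)$ on which $g$ takes a value $\ne c_0$), so $a_{\B+1}$ belongs to all three intervals. For (2): applying $g$ (which restricts to a homeomorphism on $I(0)$) gives $g([c_{\B+1},a_{\B+1}])=\{c_0\}\cup[c_0,a_0]=[c_0,a_0]$, whereas $g(q^n_{\B+1})=(k(n),0)=q^n_0\notin[c_0,a_0]$ by clause (2) of $T^0$, so $q^n_{\B+1}\notin[c_{\B+1},a_{\B+1}]$. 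For (3): $[[p^n_\B,a_\B]]\subsetneq[[p^{n+1}_\B,a_\B]]$ by clause (3) of $T^\B$, so the chain for $[p^{n+1}_{\B+1},a_{\B+1}]$ properly contains that for $[p^n_{\B+1},a_{\B+1}]$. For (4): $k(n)=q^n_0$ is strictly increasing in the order of $C_0$ (clause (4) of $T^\B$), so the chain for $[a_{\B+1},q^{n+1}_{\B+1}]$ picks up the full intermediate fibre $B(k(n))$ and hence properly contains that for $[a_{\B+1},q^n_{\B+1}]$.

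Clauses (5)--(6) are telescoping unions. For (6): the copy map $x\mapsto\wt x$ sends $\bigcup_n[p^n_\B,a_\B]=[c_\B,a_\B]-c_\B$ (clause (6) of $T^\B$) to $\bigcup_n[[p^n_\B,a_\B]]=[[c_\B,a_\B]]-\{c_{\B+1}\}$, whence $\bigcup_n[p^n_{\B+1},a_{\B+1}]=\big([[c_\B,a_\B]]-\{c_{\B+1}\}\big)\cup[\wt a_\B,a_{\B+1}]=[c_{\B+1},a_{\B+1}]-c_{\B+1}$ by the displayed identity. For (5): ``$\subseteq$'' is clear since each $[p^n_{\B+1},q^n_{\B+1}]$ is a proper subcontinuum of $C(\B+1)=\K(a_{\B+1})$ avoiding $c_{\B+1}$. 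For ``$\supseteq$'', given $w\in\K(a_{\B+1})-c_{\B+1}$ I would split on $g(w)$: if $g(w)=c_0$ then $w\in[[c_\B,a_\B]]-\{c_{\B+1}\}$ lies in some $[[p^m_\B,a_\B]]\subset[p^m_{\B+1},q^m_{\B+1}]$ by clause (6) of $T^\B$; if $g(w)\ne c_0$ then $g(w)\in C_0-c_0=\bigcup_n[p^n_0,q^n_0]\subset\bigcup_n[c_0,k(n)]$, and the summand of the chain for $[p^n_{\B+1},q^n_{\B+1}]$ singled out by the value $g(w)$ (with $n$ chosen so $g(w)\in[c_0,k(n)]$) contains $w$ --- save when $w$ lies in the far half $B_-(k(n))$ of the terminal fibre, which one handles by replacing $n$ by any $n'>n$ with $k(n')>k(n)$, so that $B(k(n))$ becomes a full intermediate fibre of the $n'$-th chain.

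Clause (7) is the delicate one and the step I expect to be the main obstacle. Fix $n$ and $x\in X(\B+1)-[c_{\B+1},a_{\B+1}]$. If $x\in D(\B+1)$, then $\K(x)\ne\K(c_{\B+1})$, $X(\B+1)$ is irreducible about $\{c_{\B+1},x\}$, and $[c_{\B+1},x]=X(\B+1)\supseteq[c_{\B+1},q^n_{\B+1}]$. Otherwise $x\in C(\B+1)$, so by clause (5) $x\in[p^m_{\B+1},q^m_{\B+1}]$ for some $m$; since $x\notin[c_{\B+1},a_{\B+1}]\supseteq[[c_\B,a_\B]]\supseteq[[p^m_\B,a_\B]]$, the point $x$ sits strictly past the cut point $\wt a_\B$ on the canonically indexed chain $B(0)\cup I(0)\cup B(1)\cup I(1)\cup\cdots$. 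Taking $M$ large, both $x$ and $q^n_{\B+1}$ lie in the proper subcontinuum $N=B(0)\cup I(0)\cup\cdots\cup I(M)\subset C(\B+1)$, so by property (III) I may compute $[c_{\B+1},x]$ and $[c_{\B+1},q^n_{\B+1}]$ inside $N$. As $c_{\B+1}$ is the endpoint of $B(0)$ opposite $\wt a_\B$, each of these intervals is $[[c_\B,a_\B]]$ followed by the initial segment of $I(0)\cup B(1)\cup I(1)\cup\cdots$ up to $x$, respectively up to $q^n_{\B+1}$ (where, by the displayed identity, the segment cuts the fibre $B(k(n))$ exactly at $B_+(k(n))$). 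Comparing how far the two segments reach along the linearly indexed chain yields comparability: if they stop in different summands the shorter interval is inside the longer; if both stop inside the single fibre $B(i)=B(k(n))$, then $[c_{\B+1},q^n_{\B+1}]\cap B(i)=B_+(k(n))$, while $[c_{\B+1},x]\cap B(i)$ is either a subarc of $B_+(k(n))$ (when $x$ lies in the copy the chain enters, using $[a_\B,z]\subseteq[a_\B,q^n_\B]$ for $z\in[a_\B,q^n_\B]$, valid by hereditary unicoherence of $X(\B)$) or all of $B_+(k(n))$ together with an arc into $B_-(k(n))$ --- and in both subcases the two intervals are nested. This gives clause (7) and finishes the proof.
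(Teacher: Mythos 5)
Your proposal is correct and follows essentially the same route as the paper: it reads Properties (1)--(4) and (6) off the displayed interval identities, proves (5) by splitting on whether the point lies over $c_0$ or in some $B(n)/I(n)$ and using cofinality of $\{k(n)\}$ in the enumeration of $C_0\cap K$, and proves (7) by comparing how far $[c_{\B+1},x]$ and $[c_{\B+1},q^n_{\B+1}]$ reach along the chain, with the same $B_\pm(k(n))$ subcase analysis. (Only nit: the monotonicity of the $k(n)$ used for clause (4) is a property of $T^0$, not $T^\B$.)
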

	
	\begin{proof}
		Properties (1) $-$ (4) follow from the equalities above.
		Property (6) follows from the equalities, the definition of $[[c_{\B}, a_{\B} ]]$ as a copy of $[c_{\B}, a_{\B} ]$,
		and how $T^\B$ has Property (6).
		
		For Property (5) first observe each $c_{\B+1} \notin [p^n_{\B+1}, q^n_{\B+1}]$.
		Then recall $\K(a_{\B+1}) = C(\B+1) = g^{-1}(C_0)$.
		First suppose $x \in B(0) = $ $[[c_{\B}, a_{\B}]] \subset $ $[c_{\B+1}, a_{\B+1}]$.
		Then Property (6) implies $x$ is some $[p^N_{\B+1}, a_{\B+1}] \subset $ $[p^N_{\B+1}, q^N_{\B+1}]$.
		
		Otherwise $(\ddag \ddag)$  says $x$ is an element of some $B(n)$ or $I(n)$.
		It is easy to verify the set $\{k(n):n \in \NN\}$ is cofinal in $\{y^0,y^1,y^2,\ldots\}$.
		Thus $y^n +1 < k(N)$ for some $N \in \NN$.
		Then the above equalities say  $x \in [p^{N}_{\B+1}, q^{N}_{\B+1}]$.
		
		To prove Property (7) for $T^{\B+1}$ observe each $[c_{\B+1},x]$ has one of the forms
		
		\begin{equation}
		[[c_\B,a_\B]] \cup I(0) \cup B(1) \cup \ldots \cup I({N-1}) \cup B({N}) \cup [y^N_{\pm},x]_{I(N)} \notag
		\end{equation}

		\begin{center}
			$[[c_\B,a_\B]] \cup I(0) \cup B(1) \cup \ldots \cup  B({N-1}) \cup I(N-1) \cup [y^{N}_{\pm},x]_{B(N)}$
		\end{center}
		
		depending on whether $x \in I(N)$ or $B(N)$ for some $N \in \NN$.
		For $k(n) < N$ the expression for $[c_\B,q^n_{\B+1}]$ says $[c_\B,q^n_{\B+1}] \subset [c_\B,x]$.
		Likewise $[c_\B,x] \subset [c_\B,q^n_{\B+1}]$ for $N < k(n)$.
		
		Finally assume $N = k(n)$. 
		For $x \in I(N)$ compare the two expressions to see $[c_\B,x] \subset [c_\B,q^n_{\B+1}]$.
		For $x \in B(N)$ we need only compare the final two summands $[y^{N}_{\pm},x]_{B(N)}$ and $B_{+} \big (k(n) \big )$.
		Observe both summands are sub- continua of $B \big (k(n) \big )$ and include the point $k_+(n)$.
		For $x \in B_{+} \big (k(n) \big )$ clearly $[y^{N}_{\pm},x]_{B(N)} \subset $ $B_{+} \big (k(n) \big )$ and so $[c_\B,x] \subset $ $[c_\B,q^n_{\B+1}]$.
		
		Otherwise $x \in B_{-} \big (k(n) \big )$.
		Recall that \mbox{$B_{+} \big (k(n) \big ) = $} $[k_+(n), q^n_{\B+1}]$ and $B_{-} \big (k(n) \big ) = $ $[q^n_{\B+1}, k_-(n)]$
		and $B_{+} \big (k(n) \big ) \cap B_{-} \big (k(n) \big ) = \{q^n_{\B+1}\}$.
		It follows $[y^{N}_{\pm},x]_{B(N)}$  includes $q^n_{\B+1}$ hence contains $B_{+} \big (k(n) \big ) $.
		We conclude that $[c_\B,q^n_{\B+1}] \subset $ $ [c_\B,x]$.
	\end{proof}
	
	Recall we built $X(\B+1)$ from the subset $\overline M \cup (Q \cup R)$ of $\RR^2 \times X(\B)$ by making some identifications.
	Since the projection $\RR^2 \times X(\B) \to X(\B)$ onto the third coordinate respects those identifications, it induces a continuous map $f^{\B+1}_\B: X(\B+1) \to X(\B)$.

	\begin{claim}
		The map $f^{\B+1}_\B : X(\B+1) \to X(\B)$ is a retraction.
	\end{claim}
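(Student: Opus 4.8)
The plan is to verify the two defining properties of a retraction: that $f^{\B+1}_\B\colon X(\B+1)\to X(\B)$ is a continuous surjection, and that it restricts to the identity on the distinguished copy $X(\B)=B(1)\subseteq X(\B+1)$; surjectivity will then come for free. Continuity is already recorded above, since $f^{\B+1}_\B$ is induced by the coordinate projection $\pi\colon\RR^2\times X(\B)\to X(\B)$, $(x,y,z)\mapsto z$. What makes that descent legitimate is that $\pi$ is constant on every decomposition class: each identification used to build $X(\B+1)$ glues two points with a common third coordinate, namely $(k,\pm 1,p^n_\B)$, $(k,\pm 1,q^n_\B)$, $(0,\pm 1,x)$ and $(1,\pm 1,x)$; and the adjoined set $(Q\cup R)\times\{a_\B\}$ has constant third coordinate $a_\B$. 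I would state this explicitly as the one point that needs care.

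Next I would unwind the identification $B(1)\cong X(\B)$. Recall $B(1)$ is the image in $X(\B+1)$ of $\wt K_\infty=\{1\}\times\{-1,1\}\times X(\B)$ under the quotient map, and that $[(1,\pm 1,z)]\mapsto z$ is the homeomorphism $B(1)\to X(\B)$ under which $X(\B)$ is regarded as a subspace of $X(\B+1)$. By construction the composite of the quotient map $\overline M\cup(Q\cup R)\to X(\B+1)$ with $f^{\B+1}_\B$ is exactly $\pi$ restricted to $\overline M\cup(Q\cup R)$; hence $f^{\B+1}_\B$ sends the class $[(1,\pm 1,z)]$ to $z$. In other words $f^{\B+1}_\B|_{B(1)}$ is precisely the homeomorphism above, i.e. the identity of $X(\B)$ once $B(1)$ is identified with $X(\B)$. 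Surjectivity is then automatic, since $f^{\B+1}_\B(X(\B+1))\supseteq f^{\B+1}_\B(B(1))=X(\B)$ (equivalently, $\pi$ already carries $\wt K_\infty\subseteq\overline M$ onto all of $X(\B)$).

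Putting the pieces together, $f^{\B+1}_\B$ is a continuous surjection onto $X(\B)$ fixing $B(1)=X(\B)$ pointwise, hence a retraction. I do not expect any genuine obstacle here: the content is almost entirely the bookkeeping of the quotient identifications, and once one checks that $\pi$ respects every class — as above — the rest is immediate. (If one also wants $B(1)$ to be closed in $X(\B+1)$, this follows because $\wt K_\infty$ is closed in $\overline M\cup(Q\cup R)$ and, as $1\notin\bigcup_n P_n\cup\bigcup_n K_n$, is saturated for the decomposition.)
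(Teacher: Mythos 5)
Your proof is correct and follows essentially the same route as the paper's: both observe that the third-coordinate projection respects every identification class (hence descends continuously) and that its restriction to the quotient of $\{1\}\times\{-1,1\}\times X(\B)$ — the copy $B(1)$ with which $X(\B)$ has been identified — is the identity. Your write-up is merely more explicit about which classes are glued and about surjectivity.
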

	
	\begin{proof}
		Write $\pi: \RR^2 \times X(\B) \to X(\B)$ for the projection onto the third coordinate. Recall the subset $M_1 = \big \{ (x,y,z) \in M: x = 1 \big \}$ is the disjoint union \mbox{$\{1\} \times \{-1,1\} \times X(\B)$} of two copies of $X(\B)$. In forming $X(\B+1)$ from  $\overline M \cup (Q \cup R)$ we identify $(1,-1,x) \sim (1,1,x)$ for each $x \in X(\B)$. Hence the restriction of $f^{\B+1}_\B$ to the quotient space of $M_1$ is an homeomorphism onto $X(\B)$. But recall we have identified $X(\B)$ with the quotient space of $M_1$.
	\end{proof}

	\begin{claim}
		Condition (b) holds for all $\G,\D \le \B+1$.
	\end{claim}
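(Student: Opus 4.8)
The plan is to treat the predecessor index $\D=\B$ separately from the strictly smaller ones, using throughout that $f^{\B+1}_\B$ is merely projection onto the third ($X(\B)$-) coordinate of $\RR^2\times X(\B)$.

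For $\D=\B$ I would compute $f^{\B+1}_\B(X(\B+1)-X(\B))$ directly. Recall $X(\B)$ is identified with $B(1)$, the quotient of $\wt K_\infty=\{1\}\times\{-1,1\}\times X(\B)$. The whiskers $\wt P_n$ and $\wt K_n$ and the arc $\wt P_\infty=[[c_\B,a_\B]]$ all have first coordinate strictly less than $1$ and so are disjoint from $B(1)$, while the only remaining piece, $Q\cup R$, sits entirely at third coordinate $a_\B$. Hence the projection carries $Q\cup R$ to $\{a_\B\}$, carries each $\wt P_n$ onto $[p^n_\B,a_\B]$, each $\wt K_n$ onto $[a_\B,q^n_\B]$, and $[[c_\B,a_\B]]$ onto $[c_\B,a_\B]$, so $f^{\B+1}_\B(X(\B+1)-X(\B))=\{a_\B\}\cup\bigcup_n\big([p^n_\B,a_\B]\cup[a_\B,q^n_\B]\big)\cup[c_\B,a_\B]$. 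Since $a_\B\in[p^n_\B,q^n_\B]$ by Property (1) of $T^\B$ and $X(\B)$ is hereditarily unicoherent, $[p^n_\B,q^n_\B]=[p^n_\B,a_\B]\cup[a_\B,q^n_\B]$, so Property (5) of $T^\B$ gives $\bigcup_n([p^n_\B,a_\B]\cup[a_\B,q^n_\B])=\K(a_\B)-c_\B=C(\B)-c_\B$; adjoining $[c_\B,a_\B]$, which contains $c_\B$ and, by Property (6), is contained in $C(\B)$, yields $f^{\B+1}_\B(X(\B+1)-X(\B))=C(\B)$. This is Condition (b) for the pair $(\B,\B+1)$, and it also records the decomposition $C(\B)=\bigcup_n[p^n_\B,q^n_\B]\cup[c_\B,a_\B]$, which I reuse below.

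For $\D<\B$ the inductive hypothesis already gives $\bigcup\{f^\G_\D(X(\G)-X(\D)):\D<\G\le\B\}=C(\D)$, so it suffices to show the single new term $f^{\B+1}_\D(X(\B+1)-X(\D))$ is contained in $C(\D)$. Using $X(\D)\subseteq X(\B)\subseteq X(\B+1)$ (Condition (a)), write $X(\B+1)-X(\D)=(X(\B+1)-X(\B))\cup(X(\B)-X(\D))$. Because every bonding map is a retraction, $f^{\B+1}_\B$ is the identity on $X(\B)$, so by the previous paragraph $f^{\B+1}_\B(X(\B+1)-X(\D))=C(\B)\cup(X(\B)-X(\D))$. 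As $f^{\B+1}_\D$ factors as $f^\B_\D\circ f^{\B+1}_\B$, applying $f^\B_\D$ gives $f^{\B+1}_\D(X(\B+1)-X(\D))=f^\B_\D(C(\B))\cup f^\B_\D(X(\B)-X(\D))$; the second set lies in $C(\D)$ by the inductive hypothesis, and for the first, coherence of the already-built system (applicable since $\D<\B<\B+1$) says $f^\B_\D$ sends each $[p^n_\B,q^n_\B]$ onto $[p^n_\D,q^n_\D]$, $[c_\B,a_\B]$ onto $[c_\D,a_\D]$, and $c_\B$ to $c_\D$, whence the decomposition of $C(\B)$ above together with Property (5) of $T^\D$ gives $f^\B_\D(C(\B))=\bigcup_n[p^n_\D,q^n_\D]\cup[c_\D,a_\D]=C(\D)$. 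Therefore $f^{\B+1}_\D(X(\B+1)-X(\D))=C(\D)$, and Condition (b) persists for every pair $\D<\G\le\B+1$.

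I expect the only step that needs care to be the first: verifying that removing $X(\B)=B(1)$ leaves exactly the pieces listed, and that the union of their projections is precisely $C(\B)$ and not something larger — this is where Properties (1), (5) and (6) of the tail and hereditary unicoherence of $X(\B)$ are used. Everything afterward is routine bookkeeping with the retraction property, the factorization $f^{\B+1}_\D=f^\B_\D\circ f^{\B+1}_\B$, and coherence.
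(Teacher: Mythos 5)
Your proposal is correct and follows essentially the same route as the paper: the heart of both arguments is the computation $f^{\B+1}_\B\big(X(\B+1)-X(\B)\big)=\{a_\B\}\cup[c_\B,a_\B]\cup\bigcup_n[p^n_\B,a_\B]\cup\bigcup_n[a_\B,q^n_\B]=C(\B)$ via Properties (1), (5), (6) and hereditary unicoherence, with the case $\D<\B$ handled by the factorization $f^{\B+1}_\D=f^\B_\D\circ f^{\B+1}_\B$ and induction. The only difference is that you spell out the reduction for $\D<\B$ (the decomposition $X(\B+1)-X(\D)=(X(\B+1)-X(\B))\cup(X(\B)-X(\D))$ and the identity $f^\B_\D(C(\B))=C(\D)$ via coherence) which the paper compresses into ``by induction and commutativity of the diagram.''
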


	\begin{proof}
		
		Recall we have defined each $f^{\B+1}_\G = f^{\B+1}_\B  \circ f^{\B}_\G $. Hence by induction and commutativity of the diagram it is enough to show $f^{\B+1}_\B\big (X(\B+1) - X(\B) \big ) = C(\B)$. To that end recall we have
		
		\begin{center}
			$ X(\B+1) = Q \cup R \cup [[c_\B,a_\B]] \cup \Big ( \bigcup \big \{B(k):k \in K - \{0,1\} \big \}  \Big ) \cup X(\B)$.
		\end{center}
		
		Consider the image of each factor under the projection $\pi$ onto the third coordinate. 
		Both $Q$ and $R$ map onto the singleton $a_\B \in C(\B)$. By definition $[[c_\B,a_\B]]$ is a copy of $[c_\B,a_\B]$ and the restriction of $\pi$ is a homeomorphism $[[c_\B,a_\B]] \to [c_\B,a_\B] \subset C(\B)$. 
		
		For $k \in K - \{0,1\}$ each $B(k)$ is homeomorphic to two copies of some $[p^n_\B, a_\B] $ or $[a_\B, q^n_\B]$ joined at the points corresponding to $p^n_\B$ or $q^n_\B$ respectively; and the restriction of $\pi$ projects each copy onto the subset $[p^n_\B, a_\B] $ or $[a_\B, q^n_\B]$ of $C(\B)$ respectively.
		
		Conversely the construction ensures that for any given $n \in \NN$ there are $k_1,k_2 \in K - \{0,1\}$ with $B(k_1)$ and $B(k_2)$ formed from two copies of $[p^n_\B, a_\B] $ and $[a_\B, q^n_\B]$ respectively as described above. From this we see the set $f^{\B+1}_\B\big (X(\B+1) - X(\B) \big )$ equals
		\begin{align}\notag 
		&\textstyle \{a_\B\} \cup [c_\B,a_\B] \cup \Big ( \bigcup_n \,  [p^n_\B, a_\B ] \, \Big ) \cup \Big ( \bigcup_n \, [a_\B,q^n_\B ] \, \Big ) \\
		&\textstyle = \notag [c_\B,a_\B] \cup \Big ( \bigcup_n \, [p^n_\B,q^n_\B ] \, \Big ) = \notag [c_\B,a_\B] \cup \big (\, \K(a_\B) - c_\B\, \big )\\
		&= \notag \K(a_\B) = C(\B).
		\end{align}
		
		The first equality follows from hereditary unicoherence and how each \mbox{$a_\B \in [p^n_\B,q^n_\B ]$}; the second from Property (5) of the tail; and the last from the definition of $C(\B)$.
	\end{proof}
	
	
	
	
	
	\begin{claim}
		The system $\{X(\G); f^\G_\D: \G,\D \le \B+1\}$ is coherent.
	\end{claim}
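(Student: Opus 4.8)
The plan is to reduce the statement to the single new bonding map and then push the set-equalities already at hand forward under the coordinate projection. Since we have defined $f^{\B+1}_\G = f^{\B+1}_\B \circ f^{\B}_\G$ for $\G < \B$, and since the inductive hypothesis gives coherence of the system $\{X(\G); f^\G_\D : \G,\D \le \B\}$, commutativity of the diagram reduces everything to verifying the defining relations of coherence for the map $f^{\B+1}_\B : X(\B+1) \to X(\B)$ alone: namely $a_{\B+1} \mapsto a_\B$, $c_{\B+1} \mapsto c_\B$, $p^n_{\B+1} \mapsto p^n_\B$, $q^n_{\B+1} \mapsto q^n_\B$, $[p^n_{\B+1},q^n_{\B+1}] \to [p^n_\B,q^n_\B]$, $[p^n_{\B+1},a_{\B+1}] \to [p^n_\B,a_\B]$ and $[c_{\B+1},a_{\B+1}] \to [c_\B,a_\B]$ for each $n$; everything for the lower maps $f^{\B+1}_\G$ then follows by applying $f^\B_\G$ and using inductive coherence.

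First I would record how $f^{\B+1}_\B$, which is induced by the coordinate projection $\pi : \RR^2 \times X(\B) \to X(\B)$, acts on the building blocks of $X(\B+1)$: it collapses each of $Q$ and $R$ to the single point $a_\B$ (both were identified with subsets of $(Q\cup R)\times\{a_\B\}$); it restricts to the homeomorphism $[[c_\B,a_\B]] \to [c_\B,a_\B]$ carrying $\tl x \mapsto x$; it is the identity on the copy $X(\B) = B(1)$; and for $k \in K - \{0,1\}$ with $B(k)$ a doubled copy of $[p^m_\B,a_\B]$ (resp. $[a_\B,q^m_\B]$) it sends $B(k)$ onto $[p^m_\B,a_\B]$ (resp. $[a_\B,q^m_\B]$). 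The four point relations are then immediate: $a_{\B+1}=g^{-1}(a_0)$ is a single point of $Q$, so it maps to $a_\B$; $c_{\B+1}=\tl{c_\B}$ and $p^n_{\B+1}=\wt{p^n_\B}$ lie in $[[c_\B,a_\B]]$ and so map to $c_\B$ and $p^n_\B$; and $q^n_{\B+1}$ was defined as the point $(k(n),\pm1,q^n_\B)$, which maps to $q^n_\B$.

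For the three interval relations I would apply $\pi$ termwise to the five set-equalities established just before the tail claim. From $[p^n_{\B+1},a_{\B+1}] = [[p^n_\B,a_\B]] \cup [\,\wt a_\B,a_{\B+1}]$ and $[c_{\B+1},a_{\B+1}] = [[c_\B,a_\B]] \cup [\,\wt a_\B,a_{\B+1}]$: the first summand maps onto $[p^n_\B,a_\B]$ (respectively $[c_\B,a_\B]$), while $[\,\wt a_\B,a_{\B+1}]$ lies inside $[[c_\B,a_\B]] \cup Q$ (it follows the big semicircle of $Q$ out of the attaching point $\wt a_\B$) and so maps into $\{a_\B\} \subset [p^n_\B,a_\B]$; taking unions gives exactly $[p^n_\B,a_\B]$ and $[c_\B,a_\B]$. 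For the remaining relation I would use $[p^n_{\B+1},q^n_{\B+1}] = [[p^n_\B,a_\B]] \cup I(0) \cup B(1) \cup \ldots \cup I(k(n)-1) \cup B_+(k(n))$. Applying $\pi$: the two end summands contribute $[p^n_\B,a_\B]$ and $[a_\B,q^n_\B]$, each arc $I(j)$ contributes $\{a_\B\}$, and each $B(y^j)$ appearing contributes some $[p^m_\B,a_\B]$ or $[a_\B,q^m_\B]$; the inclusion $\pi\big([p^n_{\B+1},q^n_{\B+1}]\big) \supseteq [p^n_\B,q^n_\B] = [p^n_\B,a_\B] \cup [a_\B,q^n_\B]$ is then immediate.

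The one point demanding real work is the reverse inclusion for $[p^n_{\B+1},q^n_{\B+1}]$: I must check that each block $B(y^j)$ occurring in the decomposition projects into $[p^n_\B,q^n_\B]$. Since $[p^n_\B,a_\B] \cap [a_\B,q^n_\B] = \{a_\B\}$ by hereditary unicoherence, and the chains $[p^1_\B,a_\B] \subsetneq [p^2_\B,a_\B] \subsetneq \cdots$ and $[a_\B,q^1_\B] \subsetneq [a_\B,q^2_\B] \subsetneq \cdots$ are Properties (3) and (4) of the tail $T^\B$, this reduces to the purely combinatorial statement that each $y^j$ with $c_0 < y^j < k(n)$ in the composant order of $B$ lies in a block $K_m$ or $P_m$ with $m \le n$ — equivalently, that the arc $[c_0,q^n_0]_B = [c_0,a_0]_B \cup [a_0,q^n_0]_B$ meets $K$ only in $\{c_0\} \cup K_1 \cup \cdots \cup K_n \cup P_1 \cup \cdots \cup P_n$. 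This is a feature of the quinary buckethandle forced by the choice of the $7/10^m$-centred semicircle families together with $q^n_0 = k(n)$; I would verify it by the same bookkeeping used in the tail claim (the points $\{k(n)\}$ are cofinal and linearly ordered among the $y^j$, and $[c_0,a_0]_B$ meets $K$ only at $c_0$), after which the argument closes and coherence holds for all $\G,\D \le \B+1$. This block-indexing fact is the main obstacle; everything else is routine termwise computation with $\pi$.
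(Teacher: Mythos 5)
Your proposal is correct and follows essentially the same route as the paper: reduce to the single new map $f^{\B+1}_\B$, push the displayed decompositions of $[c_{\B+1},a_{\B+1}]$, $[p^n_{\B+1},a_{\B+1}]$ and $[p^n_{\B+1},q^n_{\B+1}]$ forward under the third-coordinate projection, and isolate as the only nontrivial step the fact that the arc of $B$ from $a_0$ to $k(n)$ meets $K$ only in the blocks $P_1,\dots,P_n,K_1,\dots,K_n$ (the paper states this as $K\cap[a_0,k(n)]\subset[p(n),k(n)]$ "by inspecting the buckethandle"). Your identification of that block-indexing fact as the crux, and your disposal of it via Properties (3) and (4) of the tail $T^\B$, matches the paper's argument in substance and level of detail.
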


	\begin{proof}
		The choice of $a_{\B+1}$ and $c_{\B+1}$ and $f^{\B+1}_\B$ makes it clear $a_{\B+1} \mapsto a_\B$ and $c_{\B+1} \mapsto c_\B$.
		To see $f^{\B+1}_\B\big ( [c_{\B+1}, a_{\B+1}] \big ) = [c_{\B}, a_{\B}]$ recall that $[c_{\B+1}, a_{\B+1}] = $ 
		$[[c_{\B}, a_{\B} ]] \cup [ \, \wt a_{\B}, a_{\B+1}]$.
		The bonding map projects $[[c_\B,a_\B]]$ onto $[c_\B,a_\B]$ and sends $ [ \, \wt a_{\B}, a_{\B+1}] \subset I(0)$ to the point $a_\B \in [c_\B,a_\B]$.
		Likewise $f^{\B+1}_\B$ projects each $[p^n_{\B+1}, a_{\B+1}] = $ $[[p^n_{\B}, a_{\B}]] \cup  [ \, \wt a_{\B}, a_{\B+1}]$ onto $[p^n_{\B}, a_{\B}]$.

		Since the subcontinuum \mbox{$B_+ \big (k(n) \big ) \subset $} $[p^n_{\B+1},q^n_{\B+1}]$ projects 
		onto $[p^n_\B,q^n_\B]$ we have $[p^n_\B,q^n_\B] \subset f^{\B+1}_\B\big ( [p^n_{\B+1}, q^n_{\B+1}] \big )$.
		To get equality observe \mbox{$[p^n_{\B+1},q^n_{\B+1}] =$} $ [p^n_{\B+1},a_{\B+1}] \cup [a_{\B+1}, q^n_{\B+1}]$.
		The previous paragraph shows $[p^n_{\B+1},a_{\B+1}] $ maps onto $[p^n_{\B},a_{\B}] \subset [p^n_{\B},q^n_{\B}]$.
		
		Now we treat the remainder $[a_{\B+1}, q^n_{\B+1}]$.
		Recall we define $\big [a_0,k(n)\big ]$ as the unique arc in $B$ with endpoints $a_0$ and $k(n)$.
		Now observe $[a_{\B+1}, q^n_{\B+1}] \subset $ $g^{-1} \big (\big (a_0,k(n)\big ] \big )$.
		
		Consider the intersection $K \cap \big [a_0,k(n)\big ]$ of the arc with the Cantor set.
		Inspecting the buckethandle we see $p(n) \le x \le k(n)$ for each $x \in $ $K \cap \big [a_0,k(n)\big ]$.
		Therefore $\big \{k \in K: [a_{\B+1}, q^n_{\B+1}]$ meets $B(k) \big \}$ is contained in the interval $\big [p(n), k(n) \big ] \subset K$.
		
		By construction all $f^{\B+1}_\B \big (B(k) \big )$ for $1/5 \le k \le k(n)$ are contained in $f^{\B+1}_\B \big (B(k(n) \big ) = [p^n_\B,q^n_\B]$
		and all $f^{\B+1}_\B \big (B(k) \big )$ for $p(n) \le k \le 1/5$ 
		are contained in $f^{\B+1}_\B \big (B(p(n) \big ) = $ $[p^n_\B,a_\B] \subset [p^n_\B,q^n_\B]$.
		We conclude that $f^{\B+1}_\B\big ( [p^n_{\B+1}, q^n_{\B+1}] \big ) \subset  $ $[p^n_{\B}, q^n_{\B}]$ as required.
	\end{proof}
	
	\begin{claim}\label{LastSuccessor}
		Condition (c) holds for all $\G,\D \le \B+1$.
	\end{claim}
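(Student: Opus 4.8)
The plan is to reduce to the genuinely new cases. Condition (c) for all pairs $\G,\D < \B+1$ is part of the induction hypothesis, and since the bonding map $f^\G_\D$ exists only when $\D \le \G$, the case $\D = \B+1$ forces $\G = \B+1$. So it suffices to treat $\G = \B+1$. When also $\D = \B+1$ the map $f^{\B+1}_{\B+1}$ is the identity, and Condition (c) becomes the inclusion $[p^n_{\B+1},a_{\B+1}] \subseteq [c_{\B+1},a_{\B+1}]$, which is immediate from $T^{\B+1}$ being a tail (Property (6)). Everything therefore comes down to the cases $\G = \B+1$ and $\D \le \B$.

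For those I would use the explicit descriptions already established: $[c_{\B+1},a_{\B+1}] = [[c_\B,a_\B]] \cup [\wt a_\B,a_{\B+1}]$ and $[p^n_{\B+1},a_{\B+1}] = [[p^n_\B,a_\B]] \cup [\wt a_\B,a_{\B+1}]$, together with the facts established while proving coherence: $f^{\B+1}_\B$ restricts to the homeomorphism $[[c_\B,a_\B]] \to [c_\B,a_\B]$, $\wt x \mapsto x$, and collapses the arc $[\wt a_\B,a_{\B+1}] \subset I(0)$ to the single point $a_\B$. Writing $f^{\B+1}_\D = f^\B_\D \circ f^{\B+1}_\B$ (with $f^\B_\B$ the identity), I would compute the set $\{x \in [c_{\B+1},a_{\B+1}] : f^{\B+1}_\D(x) \in [p^n_\D,a_\D]\}$ piecewise on the two summands of $[c_{\B+1},a_{\B+1}]$.

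On $[\wt a_\B,a_{\B+1}]$ every point maps under $f^{\B+1}_\D$ to $f^\B_\D(a_\B)=a_\D \in [p^n_\D,a_\D]$, and this arc already sits inside $[p^n_{\B+1},a_{\B+1}]$, so it lies in both sides. On $[[c_\B,a_\B]]$ a point $\wt y$ has $f^{\B+1}_\D(\wt y) = f^\B_\D(y)$, so it lies in the set iff $f^\B_\D(y) \in [p^n_\D,a_\D]$, iff $y \in [p^n_\B,a_\B]$ — this is Condition (c) for $\G=\B$, $\D \le \B$ from the induction hypothesis (and is trivial when $\D=\B$) — iff $\wt y \in [[p^n_\B,a_\B]]$. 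Taking the union of the two pieces yields $[[p^n_\B,a_\B]] \cup [\wt a_\B,a_{\B+1}] = [p^n_{\B+1},a_{\B+1}]$, which is exactly Condition (c) for $\G = \B+1$, $\D \le \B$, completing all cases.

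I expect no serious obstacle: the substantive work was already done in setting up the descriptions of $[c_{\B+1},a_{\B+1}]$ and $[p^n_{\B+1},a_{\B+1}]$ and the structure of $f^{\B+1}_\B$, and Condition (c) at stage $\B$ passes cleanly through the composition $f^{\B+1}_\D = f^\B_\D \circ f^{\B+1}_\B$. The only point warranting a little care is the bookkeeping at the junction point $\wt a_\B$, which lies in each of $[[c_\B,a_\B]]$, $[[p^n_\B,a_\B]]$ and $[\wt a_\B,a_{\B+1}]$; I would phrase the two-piece argument so that the union is visibly the whole set $[p^n_{\B+1},a_{\B+1}]$.
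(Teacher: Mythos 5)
Your proposal is correct and follows essentially the same route as the paper: both reduce by induction/composition to the behaviour of $f^{\B+1}_\B$ on $[c_{\B+1},a_{\B+1}] = [[c_\B,a_\B]] \cup [\,\wt a_\B, a_{\B+1}]$, using that the map collapses $[\,\wt a_\B, a_{\B+1}]$ to $a_\B$ and is the canonical homeomorphism on $[[c_\B,a_\B]]$, so that the relevant preimage is $[[p^n_\B,a_\B]] \cup [\,\wt a_\B, a_{\B+1}] = [p^n_{\B+1},a_{\B+1}]$. Your explicit unfolding of the composition $f^{\B+1}_\D = f^\B_\D \circ f^{\B+1}_\B$ and the trivial cases $\D=\B+1$ is just a more detailed write-up of the same argument.
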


	\begin{proof}
		Let $h: [c_{\B+1},a_{\B+1}] \to [c_{\B},a_{\B}]$ be the restriction of $f^{\B+1}_\B$. By induction we need only show each $h^{-1} \big ([p^n_{\B}, a^n_{\B}] \big ) = [p^n_{\B+1}, a^n_{\B+1}]$.
		To that end recall $[c_{\B+1}, a_{\B+1}] = [[c_{\B}, a_{\B} ]] \cup [ \, \wt a_{\B}, a_{\B+1}]$.
		Since $[ \, \wt a_{\B}, a_{\B+1}] \subset Q$ the map $h$ takes $[ \, \wt a_{\B}, a_{\B+1}]$ to $a_\B$ and sends each 
		$\tl x \in [[c_{\B},a_{\B}]]$ to the corresponding $x \in [c_{\B},a_{\B}]$.
		It follows $h^{-1}\big ( [ x, a_\B] \big ) = [\wt x, a_{\B+1}]$ for each $x \in [c_{\B},a_{\B}]$.
		Taking $x = p^n_\B$ we see $h^{-1}\big ( [ p^n_\B, a_\B] \big ) = $ $[\,\wt p^n_\B, a_{\B+1}] = $ $[ p^n_{\B+1}, a_{\B+1}]$ as required.
	\end{proof}
	
	Claim \ref{LastSuccessor} completes the discussion of $\A = \B+1$ a successor ordinal.

	\section{The Limit Stage}\label{4Sec3}
	
	\noindent
	This section deals with the limit stage of our construction. Henceforth assume $\A \le \W_1$ is a limit ordinal and \mbox{$\{X(\B); f^\B_\G : \B,\G< \A\}$} a coherent system of indecomposable hereditarily unicoherent metric continua and retractions. For all $\B,\G,\D < \A$ we assume the objects (i), (ii) and (iii) from Section \ref{4Sec2} have been specified and Conditions (a), (b) and (c) hold.
	
	We define  $X(\A) = \varprojlim \{X(\B); f^\B_\G\}$ and each $f^\A_\B$ as the projection from the inverse limit onto its factors. For each $\G<\A$ we identify  $X(\G)$ with the subset $\big \{x\in X(\A): x_\B = x_\G$ for all $\B > \G \big \}$ of $X(\A)$. 
	
	Straightforward modifications of \cite{IndecomposableLimit} Theorem 3.1 and \cite{HULimits} Corollary 1
	show $X(\A)$ is both indecomposable and hereditarily unicoherent. To see $X(\A)$ is metric we observe by definition that $\A$ is a countable ordinal. 
	The product $\prod_{\B < \A} X(\B)$ of countably many metric spaces is itself a metric space. The inverse limit $X(\A)$ is by definition a subset of that product and therefore a metric space.
	
	It remains to show the enlarged system is coherent; to check Conditions (a), (b) and (c) hold for the enlarged system; and to specify the data (i), (ii) and (iii) for $X(\A)$. Much of the effort will go into proving the pair of sequences given for (iii) is indeed a tail. To that end we first prove some general facts about tails. 
	
	Recall at stage $0$ we defined the tail $\big ( (p^n_0) , (q^n_0) \big )$ on the quinary buckethandle. Observe the sequence $(p^n_0)$ tends to $c_0$ and the intervals $[p^n_0, a_0]$ increase to be dense in $[c_0, a_0]$ while the sequence $(q^n_0)$ has no limit and the intervals $[a_0, q^n_0]$ increase to be dense in the whole space. The next lemma show this holds for general tails.
	
	\begin{lmma}\label{tailends}
		Suppose the indecomposable and hereditarily unicoherent continuum $X$ has a tail $T = ((p^n),(q^n))$ from $a$ to $c$.
		Then $\bigcup_n [p^n,a]$ is nowhere dense and $\bigcup_n [a,q^n]$ is dense.
	\end{lmma}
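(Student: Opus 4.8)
The plan is to derive both assertions from a single observation: the interval $[c,a]$ is a \emph{proper} subcontinuum of $X$. Once that is in hand, the first statement is immediate and the second follows by combining the defining properties of a tail with the density of composants.

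First I would verify that $[c,a]$ is a proper subcontinuum. By hereditary unicoherence (property (IV)) the set $[c,a]$ is a subcontinuum of $X$. Since $c\in\K(a)$ by the very definition of a tail from $a$ to $c$, there is a proper subcontinuum $K\subset X$ with $a,c\in K$; property (III) then gives $[a,c]_X=[a,c]_K\subseteq K\subsetneq X$, so $[c,a]$ is proper. Because $X$ is indecomposable, every proper subcontinuum is nowhere dense, so $[c,a]$ is nowhere dense. By property (6) of the tail, $\bigcup_n[p^n,a]=[c,a]-c\subseteq[c,a]$, and a subset of a nowhere dense set is nowhere dense; this is the first assertion.

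For the second assertion I would first rewrite the composant $\K(a)$. For each $n$ we have $a\in[p^n,q^n]$ by property (1), and $[p^n,q^n]$ is a subcontinuum (property (IV)), hence connected, so $[p^n,q^n]=[p^n,a]\cup[a,q^n]$. Combining this with properties (5) and (6),
\[
\K(a)=\{c\}\cup\bigcup_n[p^n,q^n]=[c,a]\cup\bigcup_n[a,q^n].
\]
Now composants of an indecomposable continuum are dense (a standard fact, provable via boundary bumping: if $\overline{\K(a)}\ne X$, choose $p\notin\overline{\K(a)}$ and a neighbourhood $V$ of $p$ with $\overline V\cap\K(a)=\emptyset$; the component of $X-V$ containing $a$ is, by boundary bumping, a proper subcontinuum that contains $a$ yet meets $\overline V$, contradicting its inclusion in $\K(a)$). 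Since $[c,a]$ is closed with empty interior, $X-[c,a]$ is dense and open, so $\K(a)\cap(X-[c,a])$ is dense; but the displayed equality forces $\K(a)\cap(X-[c,a])\subseteq\bigcup_n[a,q^n]$, so $\bigcup_n[a,q^n]$ is dense.

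The interval bookkeeping ($[p^n,q^n]=[p^n,a]\cup[a,q^n]$, the closedness of $[c,a]$, and so on) is routine. The only step that calls for genuine thought is the properness of $[c,a]$: this is exactly where the clause ``$c\in\K(a)$'' built into the definition of a tail, together with hereditary unicoherence, is used. I would also make sure the density of composants is valid in the non-metric setting, which it is, boundary bumping being available for general continua. Beyond these points I do not anticipate a serious obstacle.
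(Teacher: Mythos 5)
Your proposal is correct and follows essentially the same route as the paper: both reduce the first assertion to the fact that $[c,a]$ is a proper (hence nowhere dense) subcontinuum via Property (6), and both obtain the second by writing $\K(a)-c$ as $\big([c,a]-c\big)\cup\bigcup_n[a,q^n]$ using Properties (1), (5) and (6) and then invoking density of composants. The only difference is cosmetic: you make explicit why $[c,a]$ is proper (via $c\in\K(a)$ and hereditary unicoherence), a point the paper leaves implicit, and you phrase the last step as intersecting with the dense open set $X-[c,a]$ rather than as ``dense minus nowhere dense is dense.''
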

	
	\begin{proof}
		For the first statement Property (6) says $\bigcup_n [p^n,a] \subset [c,a]$.
		By hereditary unicoherence $[c,a]$ is a subcontinuum, and it is nowhere dense by indecomposability. 
		We conclude the first union is nowhere dense. 
		
		For the second statement Property (1) says each $a \in [p^n,q^n]$. Hence $[p^n,q^n] = $
		\mbox{$[p^n,a] \cup [a,q^n]$}.
		Now observe by Property (5) that
		\begin{align*}
			\displaystyle \K(a) -c &= \bigcup _{n \in \NN} [p^n,q^n] = \bigcup_{n \in \NN}  \Big ( [p^n,a] \cup [a,q^n] \Big )\\ 
			&= \Big (\bigcup_{n \in \NN}  [p^n,a]  \Big ) \cup \Big ( \bigcup_{n \in \NN}  [a,q^n] \Big ) 
			= \Big ([c,a] - c \Big ) \cup \Big ( \bigcup_{n \in \NN}  [a,q^n] \Big ). 
		\end{align*}
		
		The left-hand-side is dense in $X$. The right hand-side is the union of two sets. We have already showed the right-hand-side is nowhere dense. It follows the second set is dense. This completes the proof.
	\end{proof}
	
	One can observe Section \ref{4Sec2} made no explicit reference to strong non-cut points. In fact these points were mentioned explicitly when we talked about tails. The next lemma is needed to obtain the strong non-cut point mentioned in the title. 
	
	\begin{lmma} \label{tailwcp}
		Suppose the indecomposable and hereditarily unicoherent continuum $X$ has a tail $T = (p^n,q^n)$ from $a$ to $c$. 
		Then $c$ is the only point of $\K(a)$ to not weakly cut the composant. 
	\end{lmma}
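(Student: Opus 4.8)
The statement has two halves: that $c$ fails to weakly cut $\K(a)$, and that every $b\in\K(a)$ with $b\ne c$ does weakly cut $\K(a)$. Both halves run entirely off Properties (1), (3), (4), (5) of a tail together with hereditary unicoherence, so the plan is essentially bookkeeping and I do not anticipate a serious obstacle.

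For the first half, I would use Property (5), which gives $\K(a)-c=\bigcup_n[p^n,q^n]$, the fact that each $[p^n,q^n]$ equals $[p^n,a]\cup[a,q^n]$ (by Property (1) and hereditary unicoherence) and is therefore a subcontinuum, and the fact that these subcontinua increase with $n$ by Properties (3) and (4). Then any two points $x,y\in\K(a)-c$ both lie in a single $[p^n,q^n]$ once $n$ is large, and $[p^n,q^n]\subset\K(a)-c\subset X-c$ is a subcontinuum not containing $c$; hence $x$ and $y$ lie in one continuum component of $X-c$, so $[x,c,y]_X$ is false. Since a weak-cut relation presupposes distinct arguments, this already shows $c$ weakly cuts no pair of points of $\K(a)$, i.e.\ $c$ is a strong non-cut point of the composant.

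For the second half, fix $b\in\K(a)$ with $b\ne c$. If $b=a$, then Property (1) gives $a\in[p^1,q^1]$, which by the definition of the interval means $[p^1,a,q^1]_X$; since $p^1,q^1\in\K(a)$ this already witnesses that $a$ weakly cuts $\K(a)$. Otherwise $b\ne a$, and Property (5) provides $N$ with $b\in[p^N,q^N]$; by the increasing property $b\in[p^n,q^n]$ for every $n\ge N$, while the strict inclusions in Properties (3) and (4) force the points $p^n$ (respectively $q^n$) to be pairwise distinct, so for $n$ large enough $b\notin\{p^n,q^n\}$. Fixing such an $n$, the subcontinuum $[p^n,q^n]$ contains $b$ strictly between its endpoints, so the remark following the definition of the interval yields $[p^n,b,q^n]_{[p^n,q^n]}$ and hence $[p^n,b,q^n]_X$ by hereditary unicoherence (property (II)); as $p^n,q^n\in\K(a)$, the point $b$ weakly cuts $\K(a)$. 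Combining the two halves proves the lemma. The only step that needs any care is this second half — isolating the case $b=a$ and excluding the degenerate possibility that $b\in\{p^n,q^n\}$ for every $n$ — but the strict inclusions built into the definition of a tail dispose of it immediately.
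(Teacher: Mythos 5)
Your proof is correct and follows essentially the same route as the paper: Property (5) exhibits $\K(a)-c$ as an increasing union of the subcontinua $[p^n,q^n]$, hence a semicontinuum missing $c$, and every other $b$ is a non-endpoint of some $[p^n,q^n]$ and therefore weakly cuts. The only (immaterial) difference is how the endpoint case is dodged: the paper bumps from $n$ to $n+1$ and invokes Property (2) to separate $p^n$ from $q^{n+1}$, whereas you use the pairwise distinctness of the $p^n$ and of the $q^n$ forced by the strict inclusions in (3) and (4) and take $n$ large.
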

	
	\begin{proof}
		Property (5) says $\K(a) - c$ is the union of a chain of subcontinua hence is a semicontinuum.
		This is the definition of $c$ not weakly cutting $\K(a)$.
		Now let $b \in \K(a) - c$ be arbitrary.
		Property (6) says $b$ is an element of some $[p^n,q^n]$.
		Define $L = [p^n,q^n]$ and $P = [p^{n+1},q^{n+1}]$
		
		First suppose $b \notin \{ p^{n}, q^{n} \}$.
		Then $[p^n,b,q^n]_L$ and thus $[p^n,b,q^n]_X$ by hereditary unicoherence.
		Now suppose  $b = p^n$.
		Then Properties (2) and (3) imply $p^n \ne p^{n+1}$ and $q^{n} \ne p^{n+1}$ respectively.
		Thus $b \notin \{ p^{n+1}, q^{n+1} \} $ which implies $ [p^{n+1},b,q^{n+1}]_P$ which in turn implies  $ [p^{n+1},b,q^{n+1}]_X$ by hereditary unicoherence.
		The case for $b = q^n$ is similar.
		We conclude each $b \in \K(a)-c$ weakly cuts the composant.
	\end{proof}
	
	We have assumed \mbox{$\{X(\B); f^\B_\G : \B,\G< \A\}$} is coherent. Hence each $a_\B \mapsto a_\G \,$, $c_\B \mapsto c_\G \,$, $p^n_\B \mapsto p^n_\G$, $q^n_\B \mapsto q^n_\G$ and it follows the inverse limit $X(\A)$ has points $a_\A = (a_\B)_{\B<\A} \, $, $c_\A = (c_\B)_{\B<\A} \,$, $p^n_\A = (p^n_\B)_{\B<\A}$ and $q^n_\A = (q^n_\B)_{\B<\A}$. 
	For ease of notation we suppress the subscript and write $a,c,p^n,q^n$ instead of $a_\A,c_\A,p^n_\A,q^n_\A$ respectively.
	
	To see $T^\A = \big ((p^n), (q^n) \big)$ is a tail from $a$ to $c$ we use Corollary \ref{irred} and the facts
	$[p_\B^n, q_\B^n] \to [p_\G^n, q_\G^n] \, , \ [p_\B^n, a_\B] \to [p_\G^n, a_\G]$ 
	and $[c_\B, a_\B] \to [c_\G, a_\G]$ respectively, from the definition of coherence, to get the three expressions.

	\begin{equation}
	[p^n,q^n] =  \varprojlim \big \{[p_\B^n, q_\B^n]; f^\B_\G; \G,\B <\A \} \tag{I}
	\end{equation}
	\begin{equation}
	[p^n,a] =  \varprojlim \big\{[p_\B^n, a_\B]; f^\B_\G; \G,\B <\A \big \} \tag{II}
	\end{equation}
	\begin{equation}
	[c,a] =  \varprojlim \big\{[c_\B, a_\B]; f^\B_\G; \G,\B <\A \big \} \tag{III}
	\end{equation} \vspace{0mm}
	
	The three expressions show the expanded system \mbox{$\{X(\B); f^\B_\G : \B,\G \le \A\}$} is coherent. Expressions (I), (III) and (II) respectively imply $T^\A$ has  Properties (1), (2) and (3) of being a tail.
	Property (4) is slightly more complicated.
	
	\begin{claim}\label{longlim2}
		$T^\A$ has Property (4) of being a tail from $a$ to $c$.
	\end{claim}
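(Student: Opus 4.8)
The plan is to establish the two relations $[a,q^n] \subseteq [a,q^{n+1}]$ and $[a,q^n] \neq [a,q^{n+1}]$ in turn, for an arbitrary $n$. Both are diagram chases that move information between $X(\A)$ and the factors $X(\B)$ using three ingredients: hereditary unicoherence of $X(\A)$ (so that $[a,q^n]$ is the minimal subcontinuum of $X(\A)$ through $a$ and $q^n$), the identity $L = \varprojlim\{\pi_\B(L); f^\B_\G\}$ valid for every subcontinuum $L \subseteq X(\A)$, and Properties (1), (2), (4), (6) of the tails $T^\B$ together with expression (I).

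For the containment I would first show that $q^n$ lies in \emph{every} subcontinuum $K$ of $X(\A)$ with $a,q^{n+1} \in K$. Writing $K = \varprojlim\{\pi_\B(K); f^\B_\G\}$, each $\pi_\B(K)$ is a subcontinuum of $X(\B)$ containing $a_\B$ and $q^{n+1}_\B$, hence contains $[a_\B,q^{n+1}_\B]$ by hereditary unicoherence of $X(\B)$, hence contains $q^n_\B$ by Property (4) of $T^\B$. Since $\big(q^n_\B\big)_{\B<\A}$ is a compatible thread (coherence of the system below $\A$), it follows that $q^n = \big(q^n_\B\big) \in \varprojlim\{\pi_\B(K)\} = K$. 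Applying this with $K = [a,q^{n+1}]$ gives $q^n \in [a,q^{n+1}]$, and minimality of $[a,q^n]$ then forces $[a,q^n] \subseteq [a,q^{n+1}]$.

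For strictness it suffices to show $q^{n+1} \notin [a,q^n]$. Property (1) of $T^\A$ gives $a \in [p^n,q^n]$, so hereditary unicoherence of $X(\A)$ yields $[a,q^n] \subseteq [p^n,q^n]$, and expression (I) gives $[p^n,q^n] = \varprojlim\{[p^n_\B,q^n_\B]; f^\B_\G\}$; hence if $q^{n+1} \in [a,q^n]$ then $q^{n+1}_\B \in [p^n_\B,q^n_\B]$ for every $\B < \A$. Since $a_\B \in [p^n_\B,q^n_\B]$ (Property (1) of $T^\B$), hereditary unicoherence of $X(\B)$ gives $[p^n_\B,q^n_\B] = [p^n_\B,a_\B] \cup [a_\B,q^n_\B]$; the case $q^{n+1}_\B \in [a_\B,q^n_\B]$ is excluded by Property (4) of $T^\B$ (it would give $[a_\B,q^{n+1}_\B] \subseteq [a_\B,q^n_\B]$), so $q^{n+1}_\B \in [p^n_\B,a_\B] \subseteq [c_\B,a_\B]$ by Property (6) of $T^\B$, contradicting Property (2) of $T^\B$. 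Thus $q^{n+1} \notin [a,q^n]$, and since $q^{n+1} \in [a,q^{n+1}]$ the inclusion $[a,q^n] \subseteq [a,q^{n+1}]$ is proper. The step I expect to be the real obstacle is the one carried out for the containment: the naive hope that $[a,q^n] = \varprojlim\{[a_\B,q^n_\B]; f^\B_\G\}$ fails, because a bonding map need not send $[a_\B,q^n_\B]$ onto $[a_\G,q^n_\G]$ — its image can also sweep out pieces of the various $[p^m_\G,a_\G]$. Passing instead to the family of all subcontinua through $a$ and $q^{n+1}$, where the thread $\big(q^n_\B\big)$ is admissible for free, is what circumvents this; everything else is bookkeeping with hereditary unicoherence and the tail axioms.
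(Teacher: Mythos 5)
Your proof is correct and its core — reducing strictness to showing $q^{n+1}_\B \notin [p^n_\B,q^n_\B]$ at every stage $\B$ via Properties (1), (2), (4), (6) of $T^\B$, and then importing this to $X(\A)$ through expression (I) — is exactly the paper's argument. You additionally verify the inclusion $[a,q^n] \subseteq [a,q^{n+1}]$ by projecting an arbitrary subcontinuum through $a$ and $q^{n+1}$ and catching the thread $(q^n_\B)$ in every factor; the paper treats this half as immediate and omits it, so your write-up is the same route carried out slightly more completely.
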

	
	\begin{proof}
		We first show each $q^{n+1}_\B \notin [p^n_\B,q^n_\B]$.
		Property (1) for $T^\B$ says $a_\B \in [p^n_\B,q^n_\B]$.
		Hence $[p^n_\B,q^n_\B] = $ $[p^n_\B,a_\B] \cup [a_\B,q^n_\B]$.
		Property (6) says $[p^n_\B,a_\B] \subset [c_\B,a_\B]$ thus \mbox{$q^{n+1}_\B \notin [p^n_\B,a_\B]$} by Property (2).
		Property (4) says $q^{n+1}_\B \notin [a_\B,q^n_\B]$.
		We conclude $q^{n+1}_\B \notin [p^n_\B,q^n_\B]$.
		
		Now we show $q^{n+1} \notin [a,q^n]$ which proves Property (4) for $T^\A$ .
		Just like before we have $[p^n,q^n] = $ $[p^n,a] \cup [a,q^n]$.
		In particular $[a,q^n]$ is contained in $[p^n,q^n]$.
		We have already shown $q^{n+1}_\B \notin [p^n_\B,q^n_\B]$.
		Thus the expression (I) implies $q^{n+1} \notin [p^n,q^n]$ as required.
	\end{proof}
	
	To show $T^\A$ has Properties (5) and (6) we introduce some notation to measure how far a given subcontinuum extends along the tail.
	
	\begin{notation}
		For each $\B<\A$ and subcontinuum $L \subset X(\B)$
		define 
		
		\begin{center}
			$\|L\| = \max \big \{n \in \NN : [c_\B,q^n_\B] \subset L \big \}$
		\end{center}
		
		where we allow the value $\|L\|= \infty$ in case all $[c,q^n] \subset L$
	\end{notation}
	
	Lemma \ref{tailends} says each $\bigcup_n [a_\B,q^n_\B]$ is dense.
	Therefore $\|L\|$ is well defined whenever $L \subset X(\B)$ is proper.
	In case $L = X(\B)$ we define $\|L\| = \infty$.
	Clearly $\|L\| \le \|P\|$ for all $L \subset P$
	and $\|L\|=0$ when either $c_\B \notin L$ or $L \subset X(\B) = \K(a_\B) = X(\B)-C(\B)$.
	
	
	
	Claims \ref{increasing} and \ref{longlim4} will be used to show $T^\A$ has Property (5).
	
	\begin{claim} \label{increasing}
		We have \mbox{$\|L\| \le \|f^\B_\G (L)\|$} for each $\G,\B < \A$ and subcontinuum \mbox{$L \subset X(\B)$}.
	\end{claim}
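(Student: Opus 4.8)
The plan is to show that the bonding map $f^\B_\G$ cannot decrease the quantity $\|\cdot\|$, by exploiting the coherence relation $[c_\B, q^n_\B] \to [c_\G, q^n_\G]$. Fix $\G \le \B < \A$ and a subcontinuum $L \subset X(\B)$. If $\|L\| = \infty$ there is nothing to prove, so assume $\|L\| = m$ is finite; in particular $L$ is proper and $c_\B \in L$. By definition of $m$ we have $[c_\B, q^n_\B] \subset L$ for every $n \le m$. Since $f^\B_\G$ is continuous, $f^\B_\G(L)$ is a subcontinuum of $X(\G)$, and by coherence $f^\B_\G\big([c_\B, q^n_\B]\big) = [c_\G, q^n_\G]$. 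Hence for each $n \le m$ we get $[c_\G, q^n_\G] = f^\B_\G\big([c_\B, q^n_\B]\big) \subset f^\B_\G(L)$, which by definition of $\|\cdot\|$ gives $\|f^\B_\G(L)\| \ge m = \|L\|$.

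First I would dispose of the trivial cases: if $L = X(\B)$ then $\|L\| = \infty$ and the inequality holds vacuously since $\|f^\B_\G(L)\| = \|f^\B_\G(X(\B))\| = \|X(\G)\| = \infty$ (the bonding maps are surjective retractions). If $c_\B \notin L$, then $\|L\| = 0$ and the inequality is automatic. So the only substantive case is $c_\B \in L$ and $L$ proper, handled as above. One small point to verify is that $f^\B_\G(L)$ is genuinely a subcontinuum so that $\|f^\B_\G(L)\|$ is defined: this is immediate because the continuous image of a continuum is a continuum, and $\|\cdot\|$ is defined on all subcontinua (with value $\infty$ allowed, in particular when the image is all of $X(\G)$).

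The only real obstacle is making sure the coherence equality $f^\B_\G\big([c_\B, q^n_\B]\big) = [c_\G, q^n_\G]$ is available for all relevant $n$ and all $\G \le \B < \A$. This is exactly part of the hypothesis: we assumed at the start of Section \ref{4Sec3} that the system $\{X(\B); f^\B_\G : \B,\G < \A\}$ is coherent, and coherence (Definition \ref{deftailmap}) includes the clause $[c_\B, q^n_\B] \to [c_\G, q^n_\G]$ — note $[c_\B, q^n_\B] = [c_\B, a_\B] \cup [a_\B, q^n_\B]$ by Property (1), so this follows from $[c_\B,a_\B] \to [c_\G,a_\G]$ together with $[a_\B,q^n_\B] \to [a_\G,q^n_\G]$, the latter being a consequence of $[p^n_\B,q^n_\B] \to [p^n_\G,q^n_\G]$ and $[p^n_\B,a_\B] \to [p^n_\G,a_\G]$. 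Thus no new work beyond unwinding the definitions is needed, and the claim follows.
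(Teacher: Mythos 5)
Your proof is correct and takes essentially the same route as the paper's: both reduce the claim to showing that $[c_\B,q^n_\B]\subset L$ forces $[c_\G,q^n_\G]\subset f^\B_\G(L)$, using the coherence clauses together with the decomposition $[c,q^n]=[c,a]\cup[a,q^n]$ from Property (1). One minor imprecision: you assert the full equality $f^\B_\G\big([c_\B,q^n_\B]\big)=[c_\G,q^n_\G]$, but your cited coherence facts only immediately give the inclusion $[c_\G,q^n_\G]\subset f^\B_\G\big([c_\B,q^n_\B]\big)$ (the image is a subcontinuum containing $c_\G$ and $q^n_\G$, hence contains the interval) --- the reverse inclusion additionally needs $[p^n_\G,a_\G]\subset[c_\G,a_\G]$ from Property (6), which is what the paper's pointwise argument invokes; fortunately only the inclusion you actually use is required.
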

	
	\begin{proof}
		In case $\|L\|=0$ the result is obvious.
		Hence assume $\|L\| > 0$.
		That means $[c_\B,q^n_\B] \subset L$ for some $n >0$.
		Property (1) for $T^\B$ says $a_\B \in [c_\B,q^n_\B]$ hence $[c_\B,q^n_\B] = [c_\B,a_\B] \cup [a_\B, q^n_\B]$.
		Property $(6)$ says each $p^m_\B \in [c_\B,a_\B]$ 
		hence $[c_\B,q^n_\B] = [c_\B,p^m_\B] \cup [p^m_\B,a_\B] \cup [a_\B, q^n_\B]$.
		We conclude each $[p^m_\B,a_\B] \subset L$.
		
		To show $[c_\G,q^n_\G] \subset f^\B_\G(L)$ first observe $c_\B \in L$ and so $c_\G \in f^\B_\G(L)$ by coherence.
		Now suppose $x \in [c_\G,a_\G] - c_\G$.
		Property $(6)$ for $X(\G)$ says $x \in [p^m_\G,a_\G]$ for some $m>0$.
		We have already shown $[p^m_\B,a_\B] \subset L$. 
		Therefore $f^\B_\G(L)$ contains $f^\B_\G\big ( [p^m_\B,a_\B] \big ) = [p^m_\G,a_\G]$ by coherence hence $x \in f^\B_\G(L)$.
		
		Now suppose $x \in [c_\G,q^n_\G] -[c_\G,a_\G]$.
		By the first paragraph we have $[c_\G,q^n_\G] = $ $[c_\G,a_\G] \cup [a_\G, q^n_\G]$ hence $x \in [a_\G, q^n_\G]$.
		The first paragraph proves $[c_\B,q^n_\B] = [c_\B,p^n_\B] \cup [p^n_\B,a_\B] \cup [a_\B, q^n_\B]$
		which equals $[c_\B,p^n_\B] \cup [p^n_\B, q^n_\B]$ since $a_\B \in [p^n_\B,q^n_\B]$.
		Therefore $[p^n_\B,q^n_\B] \subset [c_\B,q^n_\B]$.
		Finally observe $x \in [a_\G, q^n_\G] \subset$ $ [p^n_\G,q^n_\G] = $ 
		$f^\B_\G\big ( [p^n_\B,q^n_\B] \big ) \subset f^\B_\G\big ( [c_\B,q^n_\B] \big ) \subset f^\B_\G(L)$. 
		We conclude $x \in f^\B_\G(L)$ as required.
		Taking $n = \|L\|$ gives the result.
	\end{proof}
	
	\begin{claim}\label{longlim4}
		Suppose $x \in \K(a)-[c,a]$.
		Then $x \in [p^n,q^n]$ for some $n \in \NN$.
	\end{claim}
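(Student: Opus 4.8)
The plan is to work at a fixed finite stage $\B<\A$, push $x$ down to $x_\B \in X(\B)$, locate an index $n$ that works there, and then lift the containment back up to $X(\A)$ using the inverse-limit expressions (I)--(III) together with the monotonicity fact from Claim~\ref{increasing}.

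First I would fix $\B<\A$ and set $x_\B = f^\A_\B(x)$. Since $x \in \K(a) = C(\A)$ there is a proper subcontinuum $L \subset X(\A)$ with $\{a,x\} \subset L$; its image $L_\B = f^\A_\B(L)$ is a proper (by properness of the retractions, or rather by Condition~(b) and indecomposability, since a retraction onto a factor of an indecomposable system sends proper subcontinua to proper subcontinua) subcontinuum of $X(\B)$ containing $\{a_\B, x_\B\}$, so $x_\B \in \K(a_\B) = C(\B)$. Now I would use the hypothesis $x \notin [c,a]$: by expression (III), $[c,a] = \varprojlim\{[c_\B,a_\B]; f^\B_\G\}$, so $x \notin [c,a]$ forces $x_\G \notin [c_\G,a_\G]$ for \emph{some} $\G<\A$; replacing $\B$ by $\max\{\B,\G\}$ (using that the $[c_\B,a_\B]$ map onto each other under the bonding maps, so the property of lying outside $[c_\B,a_\B]$ is preserved when we move up) I may assume $x_\B \notin [c_\B,a_\B]$. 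Thus $x_\B \in \K(a_\B) - [c_\B,a_\B]$. Since $T^\B$ is a tail, Property~(6) for $T^\B$ gives $[c_\B,a_\B] - c_\B = \bigcup_n [p^n_\B, a_\B]$, and Property~(5) gives $\K(a_\B) - c_\B = \bigcup_n [p^n_\B, q^n_\B]$; because $x_\B \ne c_\B$ and $x_\B \notin [c_\B, a_\B]$, we conclude $x_\B \in [p^n_\B, q^n_\B]$ for some finite $n = n(\B)$.

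The key issue is that the index $n(\B)$ chosen at stage $\B$ might depend on $\B$ and, a priori, grow without bound as $\B \to \A$, so that no single $n$ works simultaneously at all stages --- which is exactly what is needed to invoke expression (I) and conclude $x \in [p^n, q^n]$. Here is where I expect the main obstacle, and the tool to defeat it is Claim~\ref{increasing}. Apply the norm $\|\cdot\|$ to the proper subcontinuum $L_\B \subset X(\B)$ above (or better, to the subcontinuum $[c_\B, x_\B] \cup [c_\B, a_\B]$, which is proper by indecomposability since $x_\B \ne c_\B$ lies in the composant). Claim~\ref{increasing} says $\|f^\B_\G(M)\| \ge \|M\|$ for $\G \le \B$, i.e.\ the norm is monotone \emph{decreasing} as one moves \emph{up} the tower; equivalently, once we fix the subcontinuum realizing $x$, the relevant index is bounded as $\B$ increases toward $\A$. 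Concretely: since $x_\B \in [p^{n(\B)}_\B, q^{n(\B)}_\B]$ and $a_\B \in [p^{n(\B)}_\B, q^{n(\B)}_\B]$, hereditary unicoherence gives $[c_\B, q^{n(\B)}_\B] \subset [c_\B, x_\B] \cup [c_\B, a_\B] \cup [a_\B, q^{n(\B)}_\B]$; chase this through so that $n(\B) \le \|\,[c_\B,x_\B]\cup[c_\B,a_\B]\,\|$ is controlled by the norm of a subcontinuum that is the image of a \emph{fixed} subcontinuum further up, hence (by Claim~\ref{increasing}) has norm bounded independently of $\B$. Let $N$ be that bound.

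Finally, having a single $N$ with $x_\B \in [p^m_\B, q^m_\B]$ for some $m = m(\B) \le N$ at every $\B<\A$: since there are only finitely many values $\{1, \dots, N\}$, by a pigeonhole/cofinality argument (using that $\A$ has uncountable cofinality if $\A = \W_1$, or simply that the bonding maps respect the nested inclusions $[p^m_\B, q^m_\B] \supset [p^{m'}_\B, q^{m'}_\B]$ for $m \ge m'$ so the set of good indices at stage $\B$ is an up-set, and these up-sets are nested as $\B$ increases) there is a single $n \le N$ with $x_\B \in [p^n_\B, q^n_\B]$ for all $\B<\A$. Then expression (I), $[p^n, q^n] = \varprojlim\{[p^n_\B, q^n_\B]; f^\B_\G\}$, yields $x = (x_\B)_{\B<\A} \in [p^n, q^n]$, which is the assertion of the claim.
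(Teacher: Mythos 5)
Your overall strategy --- push $x$ down to each stage $\B$, find a single index $N$ that works at every stage, and lift through expression (I) --- is the right skeleton, and it matches what the paper does in its favourable case. But the two steps that carry the weight are not correctly executed. First, you never invoke Property (7) of the tail $T^\B$, and without it the passage from a bound on $\big\|[c_\B,x_\B]\big\|$ to the membership $x_\B \in [p^N_\B,q^N_\B]$ does not go through. The inclusion you offer, $[c_\B,q^{n}_\B] \subset [c_\B,x_\B] \cup [c_\B,a_\B] \cup [a_\B,q^{n}_\B]$, is true but vacuous (it is just $[c_\B,q^n_\B]=[c_\B,a_\B]\cup[a_\B,q^n_\B]$ with an extra summand) and does not yield $n(\B) \le \big\|[c_\B,x_\B]\cup[c_\B,a_\B]\big\|$: in a general hereditarily unicoherent continuum, $x_\B \in [a_\B,q^n_\B]-[a_\B,q^{n-1}_\B]$ does not force $[a_\B,q^{n-1}_\B]\subset[a_\B,x_\B]$, since $x_\B$ could sit on a branch. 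What is needed is the comparability axiom: because $x_\B\notin[c_\B,a_\B]$, Property (7) says $[c_\B,x_\B]$ and $[c_\B,q^n_\B]$ are nested for every $n$, so if $[c_\B,q^{N}_\B]\not\subset[c_\B,x_\B]$ then $[c_\B,x_\B]\subset[c_\B,q^{N}_\B]$ and hence $x_\B\in[a_\B,q^{N}_\B]\subset[p^{N}_\B,q^{N}_\B]$. This is exactly how the paper argues.

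Second, and more seriously, your justification of the uniform bound is wrong as stated. Claim \ref{increasing} says norms do not decrease under pushing \emph{down}; applied to $[c_\B,x_\B]$ it yields $\big\|[c_\B,x_\B]\big\|\le\big\|f^\B_\G([c_\B,x_\B])\big\|$, and since $f^\B_\G([c_\B,x_\B])$ in general properly contains $[c_\G,x_\G]$, this in no way bounds $\big\|[c_\B,x_\B]\big\|$ as $\B\to\A$. The possibility that these norms are unbounded is genuine, and it is precisely the paper's second case: there, unboundedness is ruled out not by monotonicity but by Lemma \ref{liminterval}, which shows it would force $\pi_\G\big([c,x]\big)=X(\G)$ for every $\G$, hence $[c,x]=X(\A)$ and $x\notin\K(a)$, contradicting the hypothesis. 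Your sketch never actually uses the hypothesis $x\in\K(a)$ at the level of $X(\A)$ --- you only use it coordinatewise to place $x_\B$ in $C(\B)$ --- and that is the missing idea. Your route can be repaired: choose one proper subcontinuum $L\subset X(\A)$ with $c,x\in L$ (possible since $\K(a)=\K(c)$), note $\pi_{\G_0}(L)$ is proper for some $\G_0$, and apply Claim \ref{increasing} to the exact images $f^\B_{\G_0}\big(\pi_\B(L)\big)=\pi_{\G_0}(L)$ to get $\big\|[c_\B,x_\B]\big\|\le\|\pi_\B(L)\|\le\|\pi_{\G_0}(L)\|<\infty$ for all $\B\ge\G_0$. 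But as written, ``the image of a fixed subcontinuum further up'' is never identified, and the bound is not established.
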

	
	\begin{proof}
		By Lemma \ref{betlim} the set $\Psi = \big \{ \B < \A :x_\B \notin [c_\B,a_\B] \big \}$ is terminal.
		Replace $\A$ with $\Psi$ and hence assume all $x_\B \notin [c_\B,a_\B]$. This does not change $X(\A)$ or whether $x \in [p^n,q^n]$.

		We deal with two cases separately. First assume \mbox{$ \big \{ \big \| [c_\B,x_\B] \big \| : \B \in \WW\}$} is bounded for some terminal $\WW \subset \A$. Like before we can assume without loss of generality $\A=\WW$. 
		Hence there is $N \in \NN$ with each $[c_\B,q^N_\B] \not \subset [c_\B, x_\B]$.
		Property (7) for $\B$ says each $[c_\B, x_\B] \subset [c_\B,q^N_\B]$.
		In particular $ x_\B \in [c_\B,q^N_\B] =$ $ [c_\B,a_\B] \cup [a_\B,q^N_\B]$ and so $ x_\B \in [a_\B,q^N_\B] \subset [p^N_\B,q^N_\B]$.
		By (I) we conclude $(x_\B) \in [p^N,q^N]$.
		
		Now assume no such $\WW$ exists. By induction we can select an increasing sequence $\B(1) < \B(2) < \ldots$ with $\big \| [c_{\B(n)},x_{\B(n)}] \big \| > n$ for each $n \in \NN$.
		Lemma \ref{liminterval} says $\pi_0\big ( [c,x] \big ) = \overline {\bigcup \big \{f^\B_0\big ( [c_\B,x_\B]:\B <\A) \big \}}$.
		In particular $\pi_0\big ( [c,x] \big )$ contains each $f^{\B(n)}_0\big ( [c_{\B(n)},x_{\B(n)}] \big )$.
		Thus we have $\big \| \pi_0\big ( [c,x] \big ) \big \| \ge $ 
		$\big \|f^{\B(n)}_0\big ( [c_{\B(n)},x_{\B(n)}] \big )\big \|\ge $ 
		$\big \| [c_{\B(n)},x_{\B(n)}]\big \|$ by Claim \ref{increasing}
		and so $\big \| \pi_0\big ( [c,x] \big ) \big \| > n$.
		
		Since $n \in \NN$ was arbitrary we conclude $\big \| \pi_0\big ( [c,x] \big ) \big \| = \infty$.
		This means $\pi_0\big ( [c,x] \big ) = X(0)$.
		Replacing $0$ with any $\G < \A$ we see each $\pi_\G\big ( [c,x] \big ) = X(\G)$ hence $[c,x] = X(\A)$.
		That means $X(\A)$ is irreducible from $c$ to $x$. In other words $x \notin \K(c)$.
		Since $\K(c)=\K(a)$ this contradicts the assumption.
	\end{proof}

	\begin{claim}\label{longlim6}
		$T^\A$ has Property (7) of being a tail from $a$ to $c$.
	\end{claim}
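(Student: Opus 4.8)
The plan is to fix $n\in\NN$ and $x\in X(\A)-[c,a]$, and to reduce — by passing to cofinal subsets of the index set $\A$ — to a situation in which the dichotomy of Property (7) holds \emph{uniformly} along the tower; once that is achieved the two inclusions transfer to the inverse limit almost formally. First I would normalise $x$. By expression (III) we have $[c,a]=\varprojlim\{[c_\B,a_\B];f^\B_\G\}$, so $x\notin[c,a]$ forces $x_\B\notin[c_\B,a_\B]$ for at least one $\B$, and by Lemma~\ref{betlim} the set $\Psi=\{\B<\A:x_\B\notin[c_\B,a_\B]\}$ is terminal. Since passing to a cofinal subset of $\A$ gives a homeomorphic inverse limit under which the weak-cut structure, hence every interval, is preserved, I may assume $x_\B\notin[c_\B,a_\B]$ for \emph{every} $\B<\A$. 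Then Property (7) for each tail $T^\B$ applies: for each $\B$ either $[c_\B,q^n_\B]\subset[c_\B,x_\B]$ or $[c_\B,x_\B]\subset[c_\B,q^n_\B]$.

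The crux is that these two alternatives cannot oscillate with $\B$: the set $S=\{\B<\A:[c_\B,x_\B]\subset[c_\B,q^n_\B]\}$ is downward closed, hence an initial segment of $\A$. Indeed, if $\B\le\B'$ and $\B'\in S$, then $f^{\B'}_\B\big([c_{\B'},x_{\B'}]\big)$ is a subcontinuum of $X(\B)$ containing $c_\B$ and $x_\B$, so it contains $[c_\B,x_\B]$ by hereditary unicoherence; and it is contained in $f^{\B'}_\B\big([c_{\B'},q^n_{\B'}]\big)=[c_\B,q^n_\B]$, whence $[c_\B,x_\B]\subset[c_\B,q^n_\B]$. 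The equality $f^{\B'}_\B\big([c_{\B'},q^n_{\B'}]\big)=[c_\B,q^n_\B]$ is not literally one of the listed coherence conditions, but is forced by them together with Properties (1) and (6): writing $[c_{\B'},q^n_{\B'}]=[c_{\B'},a_{\B'}]\cup[a_{\B'},q^n_{\B'}]$ and noting $[a_{\B'},q^n_{\B'}]\subset[p^n_{\B'},q^n_{\B'}]$, coherence sends $[c_{\B'},a_{\B'}]$ and $[p^n_{\B'},q^n_{\B'}]$ onto $[c_\B,a_\B]$ and $[p^n_\B,q^n_\B]$, and Property (6) absorbs $[p^n_\B,a_\B]$ into $[c_\B,a_\B]$. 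Consequently either $S=\A$, or $\A-S$ is a cofinal terminal segment on which $[c_\B,q^n_\B]\subset[c_\B,x_\B]$ holds by Property (7) for $T^\B$; passing to $\A-S$ in the latter case, I may assume that one of the two inclusions holds for \emph{all} $\B<\A$.

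It then remains to push the uniform inclusion up to $X(\A)$. Since every subcontinuum $L\subset X(\A)$ equals $\varprojlim\{\pi_\B(L);f^{\B'}_\B\}$, it suffices to compare the projections $\pi_\B([c,q^n])$ and $\pi_\B([c,x])$ coordinatewise. By Lemma~\ref{liminterval}, $\pi_\B([c,q^n])=\overline{\bigcup_{\B'\ge\B}f^{\B'}_\B([c_{\B'},q^n_{\B'}])}=\overline{[c_\B,q^n_\B]}=[c_\B,q^n_\B]$, the last step because $[c_\B,q^n_\B]$ is a subcontinuum, hence closed, by hereditary unicoherence; likewise $\pi_\B([c,x])=\overline{\bigcup_{\B'\ge\B}f^{\B'}_\B([c_{\B'},x_{\B'}])}$. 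If $[c_\B,x_\B]\subset[c_\B,q^n_\B]$ for all $\B$, then every term of the latter union lies in the closed set $[c_\B,q^n_\B]$, so $\pi_\B([c,x])\subset[c_\B,q^n_\B]=\pi_\B([c,q^n])$ for all $\B$, giving $[c,x]\subset[c,q^n]$. If instead $[c_\B,q^n_\B]\subset[c_\B,x_\B]$ for all $\B$, then $\pi_\B([c,q^n])=[c_\B,q^n_\B]\subset[c_\B,x_\B]\subset\pi_\B([c,x])$ for all $\B$, giving $[c,q^n]\subset[c,x]$. Either way Property (7) holds for $T^\A$.

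I expect the monotonicity of $S$ — the observation that the two branches of Property (7) are resolved uniformly in $\B$ — to be the only genuinely non-routine ingredient; everything afterward is bookkeeping with Lemma~\ref{liminterval}, hereditary unicoherence, and the already-established coherence of the enlarged system, exactly in the spirit of the proof of Claim~\ref{longlim4}. The one place to be careful is the identity $f^{\B'}_\B([c_{\B'},q^n_{\B'}])=[c_\B,q^n_\B]$ flagged above, which is used both in establishing monotonicity and in computing $\pi_\B([c,q^n])$.
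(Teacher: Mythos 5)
Your proposal is correct and follows essentially the same route as the paper: normalise via Lemma \ref{betlim} so that every $x_\B\notin[c_\B,a_\B]$, invoke the stage-wise Property (7), and transfer the resulting inclusion to the limit with Lemma \ref{liminterval}. Your extra step showing $S$ is an initial segment is sound but not needed --- the paper obtains the same uniformity more cheaply by observing that if $[c_\B,q^n_\B]\subset[c_\B,x_\B]$ fails to hold cofinally then it fails terminally, whereupon the dichotomy at each stage forces $[c_\B,x_\B]\subset[c_\B,q^n_\B]$ terminally.
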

	
	\begin{proof}
		Suppose $x \notin [c,a]$. 
		It follows from (III) and Lemma \ref{betlim} the set $\big \{ \B < \A: x_\B \notin [c_\B,a_\B] \big \}$ is terminal.
		Without loss of generality assume all $x_\B \notin [c_\B,a_\B]$.
		Now suppose 
		$[c,q^n] \not \subset [c,x]$.
		It follows from Lemma \ref{liminterval} we cannot have $[c_\B,q^n_\B] \subset [c_\B,x_\B]$ cofinally.
		Therefore $[c_\B,q^n_\B] \not \subset [c_\B,x_\B]$ terminally.
		Property (6) for $\B$ says $[c_\B,x_\B] \subset [c_\B,q^n_\B]$ terminally.
		Thus $[c,x] \subset [c,q^n]$ by Lemma \ref{liminterval}.
	\end{proof}
	
	To deal with Property (5) of being a tail, we use Condition (c) from the construction

	\begin{claim}\label{longlim5}
		$T^\A$ has Property (6) of being a tail from $a$ to $c$.
	\end{claim}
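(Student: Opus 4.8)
The plan is to establish the set equality of Property (6), namely $\bigcup_{n} [p^n,a] = [c,a]-c$, by double inclusion, using the inverse-limit descriptions (II) and (III) obtained above, Condition (c) of the construction, and Property (6) of each finite-stage tail $T^\B$.

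For the inclusion $\bigcup_{n}[p^n,a]\subseteq [c,a]-c$, I would first note that Property (6) for $T^\B$ forces $[p^n_\B,a_\B]\subseteq [c_\B,a_\B]$ and $c_\B\notin [p^n_\B,a_\B]$ for every $\B<\A$ and every $n\in\NN$. Passing to inverse limits via (II) and (III) then gives $[p^n,a]\subseteq [c,a]$, and since $c=(c_\B)$ has $c_\B\notin [p^n_\B,a_\B]$ in each coordinate we get $c\notin [p^n,a]$. Hence every point of $\bigcup_n[p^n,a]$ lies in $[c,a]-c$.

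For the reverse inclusion, let $x=(x_\B)\in [c,a]-c$. Since $x\neq c=(c_\B)$, there is some $\D<\A$ with $x_\D\neq c_\D$. By (III) we have $x_\D\in [c_\D,a_\D]$, so because $x_\D\neq c_\D$, Property (6) for $T^\D$ yields an $n\in\NN$ with $x_\D\in [p^n_\D,a_\D]$. It then remains to propagate this membership to all coordinates, i.e.\ to show $x_\G\in [p^n_\G,a_\G]$ for every $\G<\A$, which by (II) is precisely $x\in [p^n,a]$. For $\G\le\D$ this is immediate from coherence, since $x_\G=f^\D_\G(x_\D)\in f^\D_\G\big([p^n_\D,a_\D]\big)=[p^n_\G,a_\G]$. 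For $\G>\D$ I would invoke Condition (c): by (III) we have $x_\G\in [c_\G,a_\G]$, and $f^\G_\D(x_\G)=x_\D\in [p^n_\D,a_\D]$, so $x_\G$ lies in $\big\{y\in [c_\G,a_\G]:f^\G_\D(y)\in [p^n_\D,a_\D]\big\}=[p^n_\G,a_\G]$. This finishes the inclusion and hence the proof.

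The argument is largely bookkeeping; the one step demanding care — and the very reason Condition (c) was imposed in the construction — is the upward propagation to coordinates $\G>\D$. Knowing only that $x_\D\in [p^n_\D,a_\D]$ and $x_\G\in [c_\G,a_\G]$ would not by itself locate $x_\G$ inside $[p^n_\G,a_\G]$, and without this the point $c$ could fail to sit at the ``far end'' of $\bigcup_n[p^n,a]$, allowing $c$ to weakly cut the composant. So the main obstacle is essentially verifying that Condition (c) has been propagated to stage $\A$, which was already secured by the coherence analysis preceding this claim.
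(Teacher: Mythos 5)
Your proposal is correct and follows essentially the same route as the paper: both directions are handled via the inverse-limit descriptions (II) and (III), with the reverse inclusion obtained by locating $x_\D$ in some $[p^n_\D,a_\D]$ via Property (6) at a finite stage and then propagating upward using Condition (c). Your treatment is if anything slightly more careful than the paper's, since you explicitly choose the coordinate $\D$ with $x_\D\ne c_\D$ before invoking Property (6) and you handle the coordinates below $\D$ by coherence rather than by passing to a cofinal subset.
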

	
	\begin{proof}
		Let $x \in [c,a] -c$ be arbitrary.
		By (III) each $x_\B \in [c_\B,a_\B]$.
		Let $\G < \A$ be fixed and observe $\Psi = \{\B < \A: \G \le \B\}$ is cofinal.
		Property $(6)$ for $\G$ says $x_\G \in [p^n,a_\G]$ for some $n \in \NN$.
		Let $g: [c_\B,a_\B] \to [c_\G,a_\G]$ be the restriction of $f^\B_\G$.
		
		Then $x_\G \in [p^n,a_\G]$ hence $x_\B \in g^{-1}\big ( [p^n_\G,a_\G] \big )$ which equals $ [p^n_\B,a_\B]$ 
		by Condition (c).
		Thus $x_\B \in [p^n_\B,a_\B]$ and $x \in \varprojlim \big \{ [p^n_\B,a_\B]: \B \in \Psi \big \}$.
		The expression (II) says the set on the right-hand-side equals $[p^n,a]$ and so \mbox{$x \in [p^n,a]$}.

		Since  $x \in [c,a] -c$ is arbitrary we see $\bigcup \big \{[p^n,a]: n \in N \big \}$ contains \mbox{$[c,a]-c$}.
		Property (6) for $\B$ says each $c_\B \notin [p^n_\B,a_\B]$ thus $c \notin \bigcup \big \{[p^n,a]: n \in \NN \big \}$.
		We conclude $\bigcup \big \{[p^n,a]: n \in \NN \big \} = [c,a]-c$
	\end{proof}
	
	\begin{claim}\label{goodlim}
		$T^\A$ is a tail from $a$ to $c$.
	\end{claim}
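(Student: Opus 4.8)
The proof of Claim \ref{goodlim} is essentially a bookkeeping step that collects the verifications already carried out. First I would note the hypotheses built into the phrase "tail from $a$ to $c$": by expression (III) the set $[c,a]=\varprojlim\{[c_\B,a_\B]\}$ is a proper subcontinuum of $X(\A)$ containing both $a$ and $c$, so $c\in\K(a)$ and $\K(a)=\K(c)$; likewise by expression (I) each $[p^n,q^n]=\varprojlim\{[p^n_\B,q^n_\B]\}$ is a proper subcontinuum (proper because the bonding maps are proper, or equivalently because $X(\A)$ is indecomposable), and it contains $a$ since every $a_\B\in[p^n_\B,q^n_\B]$ by Property (1) for $T^\B$; hence $[p^n,q^n]\subset\K(a)$, so in particular $p^n,q^n\in\K(a)$ and $(p^n),(q^n)$ are sequences in $\K(a)$. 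The same expression together with $c_\B\notin[p^n_\B,q^n_\B]$ (Property (5) for $T^\B$) gives $c\notin[p^n,q^n]$, so in fact $[p^n,q^n]\subset\K(a)-c$.

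Next I would read off Properties (1), (2), (3) from the inverse-limit descriptions (I), (III), (II): the coordinatewise memberships $a_\B\in[p^n_\B,q^n_\B]$, the non-memberships $q^n_\B\notin[c_\B,a_\B]$, and the strict nesting $[p^1_\B,a_\B]\subsetneq[p^2_\B,a_\B]\subsetneq\cdots$ each pass to the limit. The only slightly delicate point is the clause $a\in[c,q^n]$ of Property (1); this follows from hereditary unicoherence of $X(\A)$ once one observes that $a\in[p^n,q^n]$ forces $[p^n,q^n]=[p^n,a]\cup[a,q^n]$, and that $[p^n,a]\subset[c,a]$ (Property (6), proved below), so that $[c,q^n]=[c,a]\cup[a,q^n]$ contains $a$. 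Property (4) is exactly Claim \ref{longlim2}; Property (6) is Claim \ref{longlim5}; Property (7) is Claim \ref{longlim6}.

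It remains to assemble Property (5), $\bigcup_n[p^n,q^n]=\K(a)-c$. The inclusion $\subseteq$ is the last sentence of the first paragraph. For $\supseteq$, let $x\in\K(a)-c$. Since $[c,a]$ is a subcontinuum of $\K(a)=\K(c)$ containing $a$, either $x\in[c,a]-c$ or $x\in\K(a)-[c,a]$. In the first case Property (6) (Claim \ref{longlim5}) gives $x\in[p^n,a]$ for some $n$, and since $a\in[p^n,q^n]$ hereditary unicoherence yields $[p^n,a]\subset[p^n,q^n]$, so $x\in[p^n,q^n]$. In the second case Claim \ref{longlim4} directly gives $x\in[p^n,q^n]$ for some $n$ (this is where Claim \ref{increasing} enters, via the proof of Claim \ref{longlim4}). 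Having verified all of Properties (1)--(7), $T^\A$ is a tail from $a$ to $c$.

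I expect no genuinely hard step here: the difficulty of the limit stage lives entirely in the earlier claims. The only thing to be careful about in this final assembly is that the case split for Property (5) be exhaustive, which rests on $[c,a]\subset\K(a)$ and on Claim \ref{longlim4} covering precisely the complement $\K(a)-[c,a]$, and on recording the $\subseteq$ direction explicitly — all of which I have accounted for above.
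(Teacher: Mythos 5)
Your proposal is correct and follows essentially the same route as the paper's proof: Properties (1)--(3) are read off from (I), (III), (II), Properties (4), (6), (7) are delegated to Claims \ref{longlim2}, \ref{longlim5}, \ref{longlim6}, and Property (5) is assembled from Claim \ref{longlim4} together with Property (6) for the points of $[c,a]-c$ and the coordinatewise fact $c_\B\notin[p^n_\B,q^n_\B]$ to exclude $c$. The only cosmetic difference is that you prove the inclusion $\bigcup_n[p^n,q^n]\subseteq\K(a)-c$ directly from properness of each $[p^n,q^n]$, where the paper notes the union is a proper semicontinuum disjoint from $D(\A)$ and rules out the alternative $\bigcup_n[p^n,q^n]=\K(a)$; both arguments are sound.
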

	
	\begin{proof}
		
		The expressions (I), (III) and (II) imply $T^\A$ has Properties (1), (2) and (3) respectively.
		Properties (4), (6) and (7) follow from Claims \ref{longlim2}, \ref{longlim5} and \ref{longlim6} imply $T^\A$ respectively.
		
		Claim \ref{longlim4} says that $\bigcup_n \big [p^n,q^n]$ contains each \mbox{$x \in \K(a) - [c,a]$}.
		Combined with Property $(6)$ and how $[p^n,q^n] = $ $[p^n,a] \cup [a,q^n]$ we see $\bigcup_n \big [p^n,q^n]$ contains \mbox{$\K(a)-c$}. Since it is disjoint from $D(\A)$ it is a proper semicontinuum. Hence the only possibilities are  $\bigcup_n \big [p^n,q^n] =\K(a)-c$ or $\bigcup_n \big [p^n,q^n] =\K(a)$.
		To see the former observe Property (5) for each $\B$ says $c_\B \notin [p^n_\B,q^n_\B]$ thus $c \notin $ $\bigcup_n \big [p^n,q^n]$. This shows Property (5) for $T^\A$.
	\end{proof}

	We have already chosen the points $a = a_\A$ and $c = c_\A$. Claim \ref{goodlim} shows the pair $T^\A$ of sequences is indeed a tail from $a_\A$ to $c_\A$. Thus we have specified the objects (ii) and (iii) for stage $\A$. It remains to select the composants (i) and prove Conditions (a), (b) and (c) hold for all $\G,\D \le \A$. Finally we must show each $X(\B)$ occurs as a subspace of $X(\A)$ and how $f^\A_\B$ is a retraction.
	
	For (i) we must choose $C(\A) = \K(a_\A)$. Hereditary unicoherence along with (III) shows $C(\A) = \K(c_\A)$ as required.
	For each $\G<\A$ the identification $X(\G)=\big \{x\in X(\A): x_\B = x_\G$ for all $\B > \G \big \}$ makes it clear the $X(\G)$ are nested and the bonding maps are retractions. Hence $\bigcup\{ X(\B): \B < \A\}$ is a semicontinuum of $X(\A)$ and thus contained in some composant $D(\A)$ of $X(\A)$. Then condition (a) follows from the definition of $D(\A)$.
	
	\begin{claim}\label{data(i)}
		The composants $D(\A)$ and $C(\A)$ of $X(\A)$ are distinct. Thus at stage $\A$ the condition on (i) holds.
	\end{claim}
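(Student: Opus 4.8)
The plan is to produce a point that lies in $D(\A)$ but not in $C(\A)=\K(a_\A)$; since distinct composants of an indecomposable continuum are disjoint, exhibiting such a point is the same as proving $C(\A)\ne D(\A)$, which is exactly the condition required of the data (i) at stage $\A$. The natural candidate is the right endpoint $d_0=(1,0)$ of the buckethandle $B=X(0)$, which lies in the composant $D_0=D(0)$. Because $X(0)\subseteq\bigcup\{X(\B):\B<\A\}\subseteq D(\A)$, we already have $d_0\in D(\A)$, so everything reduces to checking $d_0\notin\K(a_\A)$.

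For this I would compute $f^\A_0\big(\K(a_\A)\big)$ and show it is contained in $C_0$, then use that $f^\A_0$ restricts to the identity on the copy of $X(0)$ inside $X(\A)$ (this is immediate from the identification $X(0)=\{x\in X(\A):x_\B=x_0\text{ for all }\B>0\}$ and $f^\A_0=\pi_0$). To compute the image, apply Property (5) of the tail $T^\A$ established in Claim \ref{goodlim}: $\K(a_\A)=\{c_\A\}\cup\bigcup_n[p^n,q^n]$. Now $f^\A_0(c_\A)=c_0\in C_0$, and for each $n$ expression (I) identifies $[p^n,q^n]$ with $\varprojlim\{[p^n_\B,q^n_\B];f^\B_\G\}$, a surjective inverse system by coherence, so $f^\A_0\big([p^n,q^n]\big)=[p^n_0,q^n_0]$. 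Since $p^n_0$ and $q^n_0$ both lie in $\K(a_0)=C_0$, the interval $[p^n_0,q^n_0]$ is a proper subcontinuum of $B$ — equivalently, an arc of the buckethandle — containing $a_0\in C_0$; being proper and meeting $C_0$, it is contained in $C_0$. Hence $f^\A_0\big(\K(a_\A)\big)=\{c_0\}\cup\bigcup_n[p^n_0,q^n_0]\subseteq C_0$.

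Finally, $d_0\in X(0)$ and $f^\A_0(d_0)=d_0$, while $d_0\notin C_0$ because $C_0$ and $D_0$ are distinct composants of $B$. If we had $d_0\in\K(a_\A)$, then $d_0=f^\A_0(d_0)\in f^\A_0\big(\K(a_\A)\big)\subseteq C_0$, a contradiction. Therefore $d_0\notin C(\A)$, and since $d_0\in D(\A)$ the composants $C(\A)$ and $D(\A)$ are distinct, which is the assertion of the claim.

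I do not expect a real obstacle here. The only steps needing a little care are the identification $f^\A_0\big([p^n,q^n]\big)=[p^n_0,q^n_0]$, which rests on expression (I) together with surjectivity of the projection of the coherent subsystem of intervals, and the elementary principle that a proper subcontinuum meeting a composant is contained in it; the remainder is bookkeeping with the tail properties proved in Claims \ref{longlim2} through \ref{goodlim}.
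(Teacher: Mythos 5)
Your argument is correct, but it takes a genuinely different route from the paper's. The paper proves distinctness directly and without the tail: given a subcontinuum $L$ containing $c_\A$ and a point $x \in X(\G)$, Condition (a) at each stage $\B>\G$ puts $x_\B = x_\G$ in $D(\B)$ while $c_\B \in C(\B)$, and since $C(\B)$ and $D(\B)$ are distinct by the inductive hypothesis this forces $\pi_\B(L)=X(\B)$ for every $\B>\G$, whence $L=X(\A)$ by surjectivity; so $X(\A)$ is irreducible from $c_\A$ to $x$ and the composants differ. You instead use Property (5) of $T^\A$ from Claim \ref{goodlim} to write $\K(a_\A)=\{c_\A\}\cup\bigcup_n[p^n,q^n]$, push this forward under $f^\A_0$ to land inside $C_0$, and observe that $d_0$ is fixed by $f^\A_0$ but lies in $D_0$. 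Each step of yours checks out: the surjectivity of the restricted interval system follows from coherence, each $[p^n_0,q^n_0]$ is a proper subcontinuum through $a_0$ and hence lies in $C_0$, and the endpoint composants of the buckethandle are disjoint. What your route buys is an explicit description $f^\A_0\bigl(C(\A)\bigr)\subset C_0$; what it costs is reliance on the heaviest result of the section, where the paper needs only Condition (a) and the inductive distinctness at stages $\B<\A$. One caution: the paper's proof of Claim \ref{goodlim} asserts that $\bigcup_n[p^n,q^n]$ is ``disjoint from $D(\A)$,'' which read literally presupposes $C(\A)\cap D(\A)=\0$ --- the very statement you are proving. The circularity is only apparent, since properness of that union already follows from $\K(a_\A)$ being a proper subset of the metric indecomposable continuum $X(\A)$, but if you route the distinctness proof through Claim \ref{goodlim} you should say a word about why no circle arises.
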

	
	\begin{proof}
		Since $X$ is indecomposable it is enough to show it is irreducible from $c \in C(\A)$ to some $x \in D(\A)$.
		Let $\G < \A$ and $x \in X(\G)$ be arbitrary and suppose $\{c,x\} \subset L$ for some subcontinuum  $L \subset X(\A)$.
		We claim $\pi_\B(L)=X(\B)$ for all $\B>\G$. Thus by surjectivity $L=X(\A)$.
		
		Clearly $\{c_{\B},x_{\B}\} \subset \pi_\B(L)$. By definition of the embedding $X(\G) \to X(\A)$ we have $x_{\B} = x_\G$ hence$\{c_{\B},x_{\G}\} \subset \pi_\B(L)$.
		Since $x_\G = \pi_\G(x)$ we have $x_\G \in X(\G)$. 
		By (a) for stage $\B$ we have $X(\G) \subset D(\B)$. Hence $\pi_\B(L)$ meets the distinct composants $C(\B)$ and $D(\B)$ of $X(\B)$ and so $\pi_\B(L) = X(\B)$ as required.
	\end{proof}
	
	To prove Condition (b) for the expanded system we use the following claim.
	
	\begin{claim}\label{string}
		Suppose $x_\G = x_{\G+1}$ for some $x \in X(\A)$ and $\G<\A$. Then $x \in X(\G)$.
	\end{claim}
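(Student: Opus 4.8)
The plan is to deduce the claim from a single fact about the successor bonding maps: for any $z\in X(\B)-C(\B)$ the fibre $(f^{\B+1}_\B)^{-1}(z)$ is the singleton $\{z\}$. Indeed, Condition (b) gives $f^{\B+1}_\B\big(X(\B+1)-X(\B)\big)\subset C(\B)$, so no point outside $X(\B)$ lies over $z$; and since $f^{\B+1}_\B$ is a retraction it restricts to the identity on $X(\B)$, so the only point of $X(\B)$ over $z$ is $z$ itself.

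Granting this, I would prove by transfinite induction on $\B$ that $x_\B$ equals $x_\G$ — that is, $x_\B$ lies in $X(\G)\subset X(\B)$ and is the image of $x_\G$ — for every $\B$ with $\G\le\B<\A$. The cases $\B=\G$ and $\B=\G+1$ are trivial and the hypothesis respectively. For the successor step, suppose $x_\B=x_\G$; then $x_\B\in X(\G)$, and Condition (a) at stage $\B$ gives $X(\G)\subset D(\B)$, so $x_\B\notin C(\B)$ because $C(\B)$ and $D(\B)$ are distinct composants. By the fibre fact $(f^{\B+1}_\B)^{-1}(x_\B)=\{x_\B\}$, and since $f^{\B+1}_\B(x_{\B+1})=x_\B$ this forces $x_{\B+1}=x_\B=x_\G$ (here $\B+1<\A$ because $\A$ is a limit ordinal). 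For a limit ordinal $\lambda$ with $\G<\lambda<\A$, the inductive hypothesis says the $\B$-th coordinate of $x_\lambda=(x_\B)_{\B<\lambda}$ is the image of $x_\G$ for every $\B\in(\G,\lambda)$, which is exactly the condition placing $x_\lambda$ in the subset $\{y\in X(\lambda):y_\B=y_\G\text{ for all }\B>\G\}=X(\G)$; thus $x_\lambda=x_\G$ as well. Once the induction is complete, $x_\B=x_\G$ for all $\B\in(\G,\A)$, which by the very definition of the embedding $X(\G)\hookrightarrow X(\A)$ says $x\in X(\G)$.

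The argument is essentially bookkeeping once the fibre fact is isolated, and I do not expect a genuine obstacle. The only point deserving care is keeping the nested identifications $X(\G)\subset X(\B)\subset X(\A)$ mutually consistent through successor and limit stages — in particular, verifying that ``$x_\B$ is the image of $x_\G$'' is precisely the condition that returns the limit thread to $X(\G)$ — but this is immediate from how the embeddings were set up (sections of the bonding retractions at successor stages, threads at limit stages). All of the real content is carried by Conditions (a) and (b) together with the fact that the bonding maps are retractions.
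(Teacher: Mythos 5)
Your proof is correct, and it rests on exactly the same two ingredients as the paper's — Condition (b) forces any point outside $X(\B)$ to map into $C(\B)$, while Condition (a) places $x_\G$ in $D(\B)$, and disjointness of the two composants finishes the job — but the organization is genuinely different. The paper fixes an arbitrary $\B > \G+1$ and applies Condition (b) directly to the pair $(\B,\G+1)$: if $x_\B \notin X(\G+1)$ then $x_{\G+1} = f^\B_{\G+1}(x_\B) \in C(\G+1)$, contradicting $x_{\G+1} = x_\G \in X(\G) \subset D(\G+1)$; this settles every coordinate in a single step and needs no induction at all. You instead isolate a local fibre fact for the single successor map $f^{\B+1}_\B$ (its fibre over any point of $X(\B)-C(\B)$ is a singleton) and propagate constancy of the thread by transfinite induction, which obliges you to treat limit ordinals separately. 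Your limit step is sound — the inductive hypothesis is precisely the membership condition defining the copy of $X(\G)$ inside $X(\lambda)$ — and your successor step correctly invokes Condition (a) at stage $\B$ (available since $\G < \B$). What your version buys is a clean, reusable local statement about the successor bonding maps; what the paper's version buys is brevity, by exploiting that Condition (b) is stated for arbitrary pairs $\G > \D$ rather than only consecutive ones, so no limit case ever arises.
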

	
	\begin{proof}	
		We claim $x_{\B}=x_{\G+1}$ for each $\B > \G+1$.  If $x_\B \in X(\G+1)$ then since $f^\B_{\G+1}$ is a retraction we have $x_\B = x_{\G+1}$. Otherwise $x_\B \in X(\B) - X(\G+1)$ and Condition (b) says $f^\B_{\G+1} (x_\B) \in C(\G+1)$. But recall \mbox{$f^\B_{\G+1} (x_\B) =  f^\B_{\G+1} \circ f^\A_\B (x) = $} $f^\A_{\G+1}(x) = x_{\G+1}$. By assumption $x_{\G+1} = x_\G$ and therefore $x_\G \in C(\G+1)$. But $x_\G \in X(\G)$ and by Condition (a) we have $X(\G) \subset D(\G+1)$.
		This is a contradiction since the composants $C(\G+1)$ and $D(\G+1)$ are disjoint.
	\end{proof}
	
	\begin{claim}\label{data(i)}
		Condition (b) holds for all $\G,\D \le \A$.
	\end{claim}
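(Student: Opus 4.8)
The plan is to reduce the statement to a single containment. By the inductive hypothesis Condition (b) already holds for the truncated system indexed by ordinals $<\A$, so for every $\D<\A$ we have $\bigcup\{f^\G_\D(X(\G)-X(\D)):\D<\G<\A\}=C(\D)$. Since $\A$ is the greatest element of the expanded index set, the only new instances of Condition (b) are the ones contributed by $\G=\A$, while the instance $\D=\A$ has an empty union and is vacuous. Hence it suffices to fix $\D<\A$ and prove $f^\A_\D(X(\A)-X(\D))\subset C(\D)$; adjoining this to the union above gives the desired equality over the range $\D<\G\le\A$.

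To prove that containment I would take an arbitrary $x\in X(\A)-X(\D)$ and show $x_\D=f^\A_\D(x)\in C(\D)$. Because $\A$ is a limit ordinal we have $\D+1<\A$, so $f^{\D+1}_\D$ is one of the retractions of the system and Claim \ref{string} applies with $\G=\D$. First, the contrapositive of Claim \ref{string} gives $x_\D\ne x_{\D+1}$. Next I would check that $x_{\D+1}\in X(\D+1)-X(\D)$: if instead $x_{\D+1}\in X(\D)$, then the retraction $f^{\D+1}_\D$ fixes $x_{\D+1}$, whereas commutativity of the diagram gives $f^{\D+1}_\D(x_{\D+1})=f^{\D+1}_\D\circ f^\A_{\D+1}(x)=f^\A_\D(x)=x_\D$, forcing $x_\D=x_{\D+1}$ and contradicting the previous line. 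Finally, Condition (b) for the truncated system, applied with the single term $\G=\D+1$, yields $f^{\D+1}_\D(X(\D+1)-X(\D))\subset C(\D)$, so $x_\D=f^{\D+1}_\D(x_{\D+1})\in C(\D)$, as wanted.

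I do not expect a real obstacle: the argument is pure bookkeeping about the embeddings $X(\G)\hookrightarrow X(\A)$, the retraction property of the bonding maps, and commutativity, all already available, together with Claim \ref{string}. The one point requiring a little care is the reduction at the start, namely that adjoining the $\G=\A$ terms must not overshoot $C(\D)$ $-$ which is precisely why the correct thing to prove is the containment $f^\A_\D(X(\A)-X(\D))\subset C(\D)$ rather than any surjectivity statement; and one should keep in mind that ``for all $\G,\D\le\A$'' is to be read as ranging over pairs with $\D<\G\le\A$, the case $\D=\A$ being empty.
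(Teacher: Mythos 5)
Your proof is correct and follows essentially the same route as the paper: split off the $\G=\A$ term, then use Claim \ref{string} together with the retraction property of $f^{\D+1}_\D$ and Condition (b) at stage $\D+1$ to show $f^\A_\D\big(X(\A)-X(\D)\big)\subset C(\D)$. The only difference is that the paper additionally (and redundantly, given the induction hypothesis) verifies the reverse containment via Property (5) of the tails, whereas you correctly observe that the containment alone suffices.
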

	
	\begin{proof}	
		
		We want to show $\bigcup  \big \{f^\G_\D \big (X(\G) - X(\D) \big ): \A \ge \G > \D \big \} = C(\D)$ for each $\D< \A$. First observe the above union can be written
		
		\begin{center}
			$f^\A_\D \big ( X(\A)-X(\D)\big ) \cup \Big (  \bigcup  \big \{f^\G_\D \big (X(\G) - X(\D) \big ): \A > \G > \D \big \} \Big )$.
		\end{center}
		
		By induction the second factor equals $C(\D)$. So it is enough to show the first factor equals $C(\D)$ as well. 
		
		To see $f^\A_\D \big ( X(\A)-X(\D) \big ) \subset C(\D)$ let $x \in X(\A)-X(\D)$ be arbitrary. Claim \ref{string} implies $x_{\D+1} \ne x_{\D}$. Since $f^{\D+1}_\D$ is a retraction we have $x_{\D+1} \in  X(\D+1)-X(\D)$ and so $f^{\D+1}_\D(x_{\D+1}) \in C(\D)$ by Condition (b) at stage $\D+1$. But  $f^{\D+1}_\D(x_{\D+1})$ is just $x_\D = f^\A_\D(x)$ and so $f^\A_\D(x) \in C(\D)$.
		
		To see $f^\A_\D \big ( X(\A)-X(\D) \big ) = C(\D)$ recall $T^\A$ is a tail from $a$ to $c$. Property (5) says $\{c\} \cup \Big ( \bigcup_n [p^n,q^n] \Big ) = C(\A)$. By coherence the image under $f^\A_\G$ is \mbox{$\{c_\G\} \cup \Big ( \bigcup_n [p^n_\G,q^n_\G] \Big ) $} which equals $C(\G)$ by Property (5) of $T^\G$.
	\end{proof}
	
	Finally we prove (c) for the expanded system.

	\begin{claim}\label{Prop(c)}
		Condition (c) holds for all $\G,\D \le \A$.
	\end{claim}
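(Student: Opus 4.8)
The plan is a short transfinite induction closing out the pattern already used at successor stages. The condition only concerns pairs with $\D \le \G \le \A$; for $\G < \A$ it is available by inductive hypothesis, and for $\G = \D = \A$ it is trivial since $f^\A_\A$ is the identity. So the one remaining case is $\G = \A$ and $\D < \A$, and I would fix such a $\D$ together with $n \in \NN$ and prove the equality $\{x \in [c,a] : f^\A_\D(x) \in [p^n_\D, a_\D]\} = [p^n, a]$ using the inverse-limit presentations (II) and (III) of $[p^n,a]$ and $[c,a]$ together with coherence of the expanded system $\{X(\B); f^\B_\G : \B, \G \le \A\}$, which has already been recorded above.

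For the inclusion $\supseteq$ I would simply read off (II): if $x \in [p^n, a]$ then $x_\B \in [p^n_\B, a_\B]$ for every $\B < \A$, hence $f^\A_\D(x) = x_\D \in [p^n_\D, a_\D]$; and since Property (6) of each $T^\B$ gives $[p^n_\B, a_\B] \subseteq [c_\B, a_\B]$, expression (III) places $x$ in $[c,a]$.

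For $\subseteq$, given $x \in [c,a]$ with $x_\D \in [p^n_\D, a_\D]$, I would verify $x_\B \in [p^n_\B, a_\B]$ for each $\B < \A$ and then conclude $x \in [p^n,a]$ from (II). There are two ranges. For $\B \le \D$ the membership is forced directly by coherence: $x_\B = f^\D_\B(x_\D)$ and coherence says $f^\D_\B\big([p^n_\D, a_\D]\big) = [p^n_\B, a_\B]$. For $\D < \B < \A$, expression (III) gives $x_\B \in [c_\B, a_\B]$ while $f^\B_\D(x_\B) = x_\D \in [p^n_\D, a_\D]$, so Condition (c) for the pair $\B, \D$ — both strictly below $\A$, hence supplied by the inductive hypothesis — yields $x_\B \in [p^n_\B, a_\B]$. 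This exhausts the cases and finishes the claim, and with it the verification that all of (i)--(iii) and Conditions (a)--(c) persist at the limit stage $\A$.

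I do not expect a genuine obstacle here: the content is entirely contained in the already-established inverse-limit descriptions (I)--(III) of the tail intervals and in coherence. The only thing that needs care is the index bookkeeping — the inductive hypothesis covers only pairs of indices strictly below $\A$ — so the argument must be split at $\B \le \D$ versus $\D < \B$, the former handled by coherence alone and the latter by Condition (c) at a stage below $\A$.
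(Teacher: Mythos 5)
Your proposal is correct and takes essentially the same route as the paper: both reduce to the case $\G = \A$, obtain the inclusion $\supseteq$ by reading off (II), (III) and Property (6) at stage $\D$, and obtain $\subseteq$ coordinatewise by combining (III) with Condition (c) at pairs of stages strictly below $\A$. The only cosmetic difference is that you verify the coordinates $\B \le \D$ explicitly via coherence, whereas the paper checks only the cofinal range $\B > \D$ and relies on the remark that an inverse limit is determined by any cofinal set of indices.
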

	
	\begin{proof}
		By commutativity and Condition (c) at earlier stages it is enough to consider the case $\G = \A$. We must show $\big \{ x \in [c,a] : x_\D \in [p^n_\D,a_\D] \big \} =$ $ [p^n,a]$. 
		
		Let $\G > \D$ be arbitrary and consider the $\G$-coordinate of a point $x$ of the left-hand-side. Since $x \in [c,a]$ and $[c,a] = \varprojlim \big \{ [c_\D,a_\D] ; f^\G_\D \big \}$ by (III) we have $x_\G \in [c_\G,a_\G]$. Since $x_\D = f^\G_\D(x_\G)$ we see $x_\G$ is an element of $\big \{ y \in [c_\G,a_\G] : f^\G_\D(y) \in [p^n_\D,a_\D] \big \}$. Property (c) at earlier stages says this set equals $[p^n_\G,a_\G]$. Thus \mbox{$x_\G \in [p^n_\G,a_\G]$}. By (II) we know the set $\varprojlim \big \{ [p^n_\D,a_\D] ; f^\G_\D \big \}$ is well defined and equals $[p^n,a]$. Since $x_\G \in [p^n_\G,a_\G]$ for all $\G>\D$ we conclude $\big \{ x \in [c,a] : x_\D \in [p^n_\D,a_\D] \big \} \subset $ $ [p^n,a]$.

		For the other inclusion let $x \in [p^n,a]$ be arbitrary. By (II) each $x_\D \in [p^n_\D,a_\D]$.  By Property $(6)$ at stage $\D$ we have $[p^n_\D,a_\D] \subset [c_\D,a_\D]$. Hence $x_\D \in [c_\D,a_\D]$ and by (III) we have $x \in [c,a]$. We conclude $[p^n,a] \subset \big \{ x \in [c,a] : x_\D \in [p^n_\D,a_\D] \big \}$.
	\end{proof}
	
	We are ready to prove the main theorem.
	
	\begin{thm}
		There exists a Bellamy continuum with a strong non-cut point.
	\end{thm}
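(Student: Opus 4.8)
The plan is to run the transfinite recursion of Sections~\ref{4Sec2}--\ref{4Sec3} through $\A=\W_1$, put $X:=X(\W_1)$, and then form the required example by identifying a point of $C(\W_1)$ with a point of $D(\W_1)$, exactly as in Bellamy \cite{one}; the tail machinery is what tells us where the strong non-cut point lives.

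First I would observe that $\W_1$ is a limit ordinal, so by the limit-stage arguments $X=\varprojlim\{X(\B);f^\B_\G\}$ is an indecomposable hereditarily unicoherent continuum (only the metrizability argument of Section~\ref{4Sec3} fails, $\W_1$ being uncountable). Conditions (a) and (b), as noted in the text, ensure (following \cite{one}) that $X$ has exactly two composants: $C:=C(\W_1)=\K(a_{\W_1})=\K(c_{\W_1})$, and $D:=D(\W_1)\supseteq\bigcup_{\B<\W_1}X(\B)$. Since a metric indecomposable continuum has $\cn$ composants, $X$ is non-metric. By Claim~\ref{goodlim} the system carries a tail $T^{\W_1}$ from $a:=a_{\W_1}$ to $c:=c_{\W_1}$, so Lemma~\ref{tailwcp} says $c$ is the only point of $C$ that does not weakly cut $C$; in fact Properties (1),(4),(6),(7) of the tail give $[c,a]\subseteq[c,q^m]$ and $[a,q^n]\subseteq[a,q^m]$ for $m\ge n$, whence every point of $C-c$ weakly cuts $c$ from some $q^m$. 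Thus every point of $C$ weakly cuts $X$, and no point of $C$ can survive as a strong non-cut point after the identification.

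Next I would fix the right endpoint $d_0$ of the buckethandle $X(0)=B$, whose composant $D_0\subset B$ has $D_0-d_0$ arcwise connected (just as for the Knaster endpoint), regard it via the retractions as a point of $X(0)\subseteq D$, and choose any $d\in D$ with $d\neq d_0$. Let $X':=X/(c\sim d)$ with quotient map $\phi\colon X\to X'$; the decomposition is visibly upper semicontinuous, so $X'$ is a continuum, and $\phi$ restricts to a homeomorphism of $X-\{c,d\}$ onto the open dense set $X'-\{[c]\}$. Since $\phi$ identifies only the pair $\{c,d\}$, the preimage of a connected subset of $X'$ has at most two components; hence the preimage of a proper subcontinuum $L\subset X'$ is a union of at most two proper subcontinua of $X$, each nowhere dense in $X$, and pushing forward (as $\phi$ is closed and $\phi|_{X-\{c,d\}}$ is a homeomorphism onto a dense open set) $L$ is nowhere dense in $X'$. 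Therefore $X'$ is indecomposable. Because the proper subcontinua of $X$ through $c$ fill out $C$ and those through $d$ fill out $D$, their $\phi$-images joined at $[c]$ fill out $X'$, so $\K([c])=X'$: $X'$ has one composant, and (not being metric, for the same reason as $X$) is a Bellamy continuum.

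Finally I would check that $d_0$, a point of $X'$ distinct from $[c]$, is a strong non-cut point of $X'$, i.e.\ that $X'-d_0$ is a semicontinuum. Two points of $\phi(C)$ are joined by the union of two proper subcontinua of $X$ through $c$ (these lie in $C$, hence avoid $d_0\in D$), glued at $[c]$; two points of $\phi(D)-d_0$ are joined inside $D-d_0$; and a point of $\phi(C)$ is joined to a point of $\phi(D)-d_0$ by combining a proper subcontinuum of $X$ through $c$ with a subcontinuum of $D-d_0$ through $d$, glued at $[c]=[d]$. All of this relies on $D-d_0$ being a semicontinuum, and this is the step I expect to be the main obstacle: one must show that the arcwise connectedness of $D_0-d_0$ in the base buckethandle propagates up the $\W_1$-chain. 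Each successor bonding map, via the auxiliary monotone surjection $g$ of Section~\ref{4Sec2}, presents $D(\B+1)$ as $g^{-1}(D_0)$ with the copy of $X(\B)$ as the fibre over $d_0$, so that $D(\B+1)-d_0=g^{-1}(D_0-d_0)\cup(X(\B)-d_0)$; using that $g$ is monotone (so preimages of semicontinua are semicontinua) and that the re-blowing-up at each stage binds the pieces of $X(\B)-d_0$ together, one must argue that no two points of $D-d_0=\bigcup_\B(X(\B)-d_0)$ remain permanently separated by $d_0$. Granting this, $X'$ is the desired Bellamy continuum with strong non-cut point $d_0$.
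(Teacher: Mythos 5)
Your overall frame (run the recursion to $\W_1$, take the limit $X$, and identify one point from each composant) matches the paper, but the two specific choices you make --- which points to identify and where the strong non-cut point lives --- are the reverse of the paper's, and your version does not work. The paper identifies a point $x$ of the trivial composant $E=\bigcup_{\B<\W_1}X(\B)$ (your $D$) with a point $y\in (X-E)-\{c\}$, and the strong non-cut point of $\widetilde X$ is $c$ itself: Property (5) of the tail makes $(X-E)-c=\bigcup_n[p^n,q^n]$ an increasing union of subcontinua, hence a semicontinuum, and gluing the semicontinuum $E$ onto it at the single point $z=[x]=[y]$ keeps $\widetilde X-c$ a semicontinuum. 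Your assertion that ``every point of $C$ weakly cuts $X$, and no point of $C$ can survive as a strong non-cut point after the identification'' conflates weakly cutting $X$ (which every point of a two-composant indecomposable continuum does, vacuously, and which says nothing about the quotient) with weakly cutting the composant: $c$ does not weakly cut $C$, by Lemma \ref{tailwcp}, and with the paper's choice of identification that is exactly the property that survives. By identifying $c$ itself with a point of $D$ you destroy the one genuine strong non-cut point the tail machinery provides.

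The step you flag as ``the main obstacle'' --- that $D-d_0$ is a semicontinuum --- is in fact false, so the construction cannot be repaired along these lines. The paper's Claims \ref{stab}--\ref{trivcut} show that \emph{every} point of the trivial composant weakly cuts it: each subcontinuum of $E$ lies in a single stage $X(\A)$, and any subcontinuum of $E$ joining a point of $X(0)$ to a point of $X(2)-X(1)$ must contain all of $X(1)\supset X(0)\ni d_0$. So the endpoint structure of $d_0$ inside the buckethandle is irrelevant; the obstruction is the stage structure of $E$ (each $X(\B)$ nowhere dense in $X(\B+1)$, with Condition (b) forcing any connector that leaves a stage to swallow that entire stage on the way back), not anything about $D_0-d_0$. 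Consequently $d_0$ weakly cuts your $X'$ as well: the only detour through $[c]=[d]$ would require either a proper subcontinuum of $X$ meeting both composants (impossible) or a proper subcontinuum of $E$ containing $d$ and a point outside $X(1)$, which again contains $d_0$.
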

	
	\begin{proof}
		By Theorem 1 of \cite{one} the limit $X = \varprojlim \{X(\A);f^\A_\B: \B,\A < \W_1\}$ is indecomposable with at most two composants.
		The \textit{trivial composant} $E \subset X$ is the set $\bigcup \{X(\A): \A < \W_1\}$ of eventually constant $\W_1$-sequences.
		The \textit{nontrivial composant} $X-E$ is equal to $\varprojlim \{C(\A);f^\A_\B: \B,\A < \W_1\}$.
		The points $a = (a_\B)$ and $c = (c_\B)$ witness how $X-E$ is nonempty.
		Therefore $X$ has exactly two composants.
		
		Theorem \ref{goodlim} applied to $\{X(\A);f^\A_\B: \B,\A < \W_1\}$ says there is a tail from $a = (a_\B)$ to $c = (c_\B)$.
		Lemma \ref{tailwcp} says $c$ does not weakly cut $X-E$.

		Choose any two points $x \in E$ and $y \in X-E$ both distinct from $c$. 
		Let $\widetilde X$ be obtained by treating $\{x,y\}$ as a single point.
		It follows $\widetilde X$ is an indecomposable continuum with exactly one composant and $c \in \widetilde X$ is not a weak cut point.
	\end{proof}

	Finally we prove the lemmas cited throughout.
	
	\begin{lmma}\label{betlim}
		Suppose $\{Y(\B); f^\B_\G: \G,\B \in \WW\}$ is an inverse system
		whose limit $Y$ has points $p = (p_\B)$ and  $q = (q_\B)$ and $a = (a_\B)$.
		Suppose the set $\big \{\B \in \WW: [p_\B, a_\B, q_\B] \big \}$ is cofinal.
		Then $[p,a,q]$.
	\end{lmma}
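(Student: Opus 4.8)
The plan is to prove the statement by contradiction, showing that if $[p,a,q]$ fails then $[p_\B,a_\B,q_\B]$ fails for all sufficiently large $\B$, which is incompatible with the cofinality hypothesis.

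First I would unpack the negation. If $[p,a,q]$ fails then $p$ and $q$ lie in a common continuum component of $Y-a$, so there is a subcontinuum $L\subset Y-a$ with $\{p,q\}\subset L$; in particular $a\notin L$. Using the identity $L=\varprojlim\{\pi_\B(L);f^\B_\G\}$ recorded in the preliminaries, together with the fact that $a=(a_\B)$ automatically satisfies the thread conditions, membership $a\in L$ would be equivalent to $a_\B\in\pi_\B(L)$ for every $\B\in\WW$. Since $a\notin L$, there is some index $\G_0$ with $a_{\G_0}\notin\pi_{\G_0}(L)$.

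Next I propagate this upward. For every $\B\ge\G_0$ commutativity of the diagram gives $f^\B_{\G_0}\big(\pi_\B(L)\big)=\pi_{\G_0}(L)$ and $f^\B_{\G_0}(a_\B)=a_{\G_0}\notin\pi_{\G_0}(L)$, hence $a_\B\notin\pi_\B(L)$. But $\pi_\B(L)$ is a subcontinuum of $Y(\B)$, being a continuous image of the continuum $L$, and it contains $p_\B=\pi_\B(p)$ and $q_\B=\pi_\B(q)$ while missing $a_\B$. Therefore $p_\B$ and $q_\B$ lie in a common continuum component of $Y(\B)-a_\B$, i.e. $[p_\B,a_\B,q_\B]$ fails, and this holds for all $\B\ge\G_0$. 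Since the set $\{\B\in\WW:[p_\B,a_\B,q_\B]\}$ is cofinal we may pick some $\B$ in it with $\B\ge\G_0$, a contradiction. For any such $\B$ the coordinates $p_\B,a_\B,q_\B$ are distinct, so $p,a,q$ are distinct and the conclusion $[p,a,q]$ is well posed.

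I expect the only delicate point to be the passage from $a\notin L$ to $a_\B\notin\pi_\B(L)$ for all large $\B$; this rests entirely on the stated fact that a subcontinuum of an inverse limit is recovered as the inverse limit of its projections, together with commutativity of the bonding maps. The remaining steps are routine.
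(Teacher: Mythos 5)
Your argument is correct and is essentially the contrapositive of the paper's own proof: the paper restricts to the cofinal set of indices where $[p_\B,a_\B,q_\B]$ holds and deduces $a_\B\in\pi_\B(L)$ for every $\B$, hence $a\in L$, whereas you propagate a single failure $a_{\G_0}\notin\pi_{\G_0}(L)$ upward along the bonding maps to contradict cofinality. Both versions rest on exactly the same two facts, namely $L=\varprojlim\{\pi_\B(L);f^\B_\G\}$ and that projections of subcontinua are subcontinua, so this is the same proof run in the opposite direction.
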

	
	\begin{proof}
		First replace $\WW$ with $\big \{\B \in \WW: a_\B \in [p_\B, a_\B, q_\B] \big \}$ hence assume each $[p_\B, a_\B, q_\B]$.
		Now suppose $L \subset Y$ is a subcontinuum with $\{p,q\} \subset L$.
		Then each $\{p_\B,q_\B\} \subset \pi_\B(L)$ and so $a_\B \in \pi_\B(L)$ since $\pi_\B(L)$ is a subcontinuum.
		Now recall $L = \varprojlim \{\pi_\B(L);f^\B_\G: \G,\B \in \WW\}$.
		Since each $a_\B \in \pi_\B(L)$ we have $a \in L $.
		Since $L \subset Y$ is arbitrary we conclude $[p,a,q]$.
	\end{proof}

	\begin{corollary}\label{irred}
		Suppose $\{Y(\B); f^\B_\G: \G,\B \in \WW\}$ is an inverse system 
		whose limit $Y$ has points $p = (p_\B)$ and $q = (q_\B)$. 
		Suppose $\big \{\B \in \WW: Y(\B) = [p_\B,q_\B] \big \}$ is cofinal.
		Then $Y=[p,q]$.
	\end{corollary}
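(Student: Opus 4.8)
The plan is to unwind the statement $Y=[p,q]$ into its concrete meaning: $Y$ has no proper subcontinuum containing both $p$ and $q$ (equivalently, $Y$ is irreducible from $p$ to $q$, and the hypothesis ``$Y(\B)=[p_\B,q_\B]$'' likewise says $Y(\B)$ is irreducible from $p_\B$ to $q_\B$). So I would start from an arbitrary subcontinuum $L\subset Y$ with $\{p,q\}\subset L$ and aim to force $L=Y$.

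First I would replace $\WW$ by the cofinal set $\Psi=\{\B\in\WW: Y(\B)=[p_\B,q_\B]\}$; by the fact noted earlier that restricting an inverse system to a cofinal subset of the index set yields a homeomorphic limit, this replaces $Y$ by a homeomorphic copy and, again using that a subcontinuum of an inverse limit is the inverse limit of its projections, turns $L$ into $\varprojlim\{\pi_\B(L);f^\B_\G:\G,\B\in\Psi\}$. Next, for each $\B\in\Psi$ I would note that $\pi_\B(L)$ is a subcontinuum of $Y(\B)$ containing $p_\B=\pi_\B(p)$ and $q_\B=\pi_\B(q)$; since $Y(\B)$ is irreducible from $p_\B$ to $q_\B$, irreducibility forces $\pi_\B(L)=Y(\B)$. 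Finally I would observe that the inverse systems $\{\pi_\B(L);f^\B_\G\}$ and $\{Y(\B);f^\B_\G\}$ over $\Psi$ now coincide term by term, so $L=\varprojlim\{\pi_\B(L)\}=\varprojlim\{Y(\B)\}=Y$, as desired.

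A variant would avoid this passage and instead invoke Lemma \ref{betlim}: for a given $a=(a_\B)\in Y-\{p,q\}$, each of the conditions $a_\B\ne p_\B$ and $a_\B\ne q_\B$ is preserved by the bonding maps in the increasing direction, so both hold past some index; hence for $\B$ in the cofinal set $\Psi$ beyond that index, $a_\B$ is a point of the irreducible continuum $Y(\B)$ distinct from its two irreducibility points, so $[p_\B,a_\B,q_\B]$. Since the set of such $\B$ is cofinal, Lemma \ref{betlim} yields $[p,a,q]$; as $a$ ranges over $Y-\{p,q\}$ this says exactly $Y=[p,q]$.

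I do not expect a genuine obstacle here: the two facts doing the work — that a subcontinuum of an inverse limit is the inverse limit of its projections, and that restricting to a cofinal index set leaves the limit unchanged — are already recorded in the preliminaries, and the only ``local'' input is the elementary observation that in a continuum irreducible from $p_\B$ to $q_\B$ every other point weakly cuts $p_\B$ from $q_\B$, since a subcontinuum omitting that point but containing $p_\B$ and $q_\B$ would have to be the whole space. The one point requiring mild care is cofinality bookkeeping, which is harmless because every index set used in the paper is an initial segment of $\W_1$ and hence linearly ordered, so a cofinal set meets every terminal set cofinally and lies above every index cofinally.
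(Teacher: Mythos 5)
Your proposal is correct. The paper states this corollary without a written proof, the intended argument being exactly your ``variant'': for $a\ne p,q$ the coordinates $a_\B$ differ from $p_\B$ and $q_\B$ terminally, so on a cofinal set $a_\B\in Y(\B)=[p_\B,q_\B]$ gives $[p_\B,a_\B,q_\B]$, and Lemma \ref{betlim} yields $[p,a,q]$; so that route matches the paper's intent. Your primary argument (restrict to the cofinal set, use irreducibility of each $Y(\B)$ about $\{p_\B,q_\B\}$ to force $\pi_\B(L)=Y(\B)$, then $L=\varprojlim\pi_\B(L)=Y$) is an equally valid direct route that bypasses the lemma entirely, at the cost of invoking the equivalence between $Y(\B)=[p_\B,q_\B]$ and irreducibility about $\{p_\B,q_\B\}$ in both directions; the lemma-based route keeps everything at the level of the weak cut relation, which is the form in which the corollary is actually used later. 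Your cofinality caveat is well placed: the paper's definition of cofinal (``no upper bound'') only guarantees the restriction-to-a-cofinal-subsystem homeomorphism for linearly ordered index sets, which is all that occurs here.
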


	\begin{lmma} \label{liminterval}
		Suppose $\{X(\B); f^\B_\G: \G,\B \in \WW\}$ is an inverse system with points $a= (a_\B)$ and $b = (b_\B)$. 
		Each $\pi_\G \big ( [a,b] \big ) = $ $\overline { \bigcup \big \{f^\B_\G\big ( [a_\B,b_\B] \big ): \B > \G \big \}}$.
	\end{lmma}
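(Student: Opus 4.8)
The plan is to read both sides of the identity through the standard correspondence between subcontinua of an inverse limit and threads of subcontinua of the factors, using that — as everywhere in this paper — the $X(\B)$ are hereditarily unicoherent, so that $[a,b]$ is the \emph{smallest} subcontinuum of $Y$ containing $a$ and $b$, and each $[a_\B,b_\B]$ is the smallest subcontinuum of $X(\B)$ containing $a_\B$ and $b_\B$. Abbreviate the right-hand side as $L_\G=\overline{\bigcup\{f^\B_\G([a_\B,b_\B]):\B\ge\G\}}$. Since $f^\B_\G([a_\B,b_\B])$ is a subcontinuum of $X(\G)$ containing $a_\G=f^\B_\G(a_\B)$ and $b_\G=f^\B_\G(b_\B)$, hereditary unicoherence gives $[a_\G,b_\G]\subseteq f^\B_\G([a_\B,b_\B])$ for every $\B>\G$; hence enlarging the index set from $\{\B>\G\}$ to $\{\B\ge\G\}$ leaves the closure unchanged, and the assertion to prove is $\pi_\G([a,b])=L_\G$.

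For the inclusion $\pi_\G([a,b])\subseteq L_\G$ I would first check that $\{L_\B\}_{\B\in\WW}$ is an inverse subsystem. Each $L_\G$ is a union of subcontinua $f^\B_\G([a_\B,b_\B])$ all passing through the common point $a_\G$, hence connected, and it is closed in the compactum $X(\G)$, so it is a subcontinuum containing $a_\G$ and $b_\G$. For $\B>\G$, continuity of $f^\B_\G$ and commutativity of the diagram give
\[
f^\B_\G(L_\B)\ \subseteq\ \overline{\textstyle\bigcup\{f^\B_\G(f^\D_\B([a_\D,b_\D])):\D\ge\B\}}\ =\ \overline{\textstyle\bigcup\{f^\D_\G([a_\D,b_\D]):\D\ge\B\}}\ \subseteq\ L_\G.
\]
Hence $L:=\varprojlim\{L_\B;f^\B_\G\}$ is a subcontinuum of $Y$, and since $a_\B,b_\B\in L_\B$ for every $\B$ it contains both $a$ and $b$. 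By minimality of $[a,b]$ we get $[a,b]\subseteq L$, and because $L$ sits inside $\prod_\B L_\B$ every point of $[a,b]$ has $\G$-th coordinate in $L_\G$; that is, $\pi_\G([a,b])\subseteq L_\G$.

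For the reverse inclusion $L_\G\subseteq\pi_\G([a,b])$ I would use the fact recalled earlier that the subcontinuum $[a,b]$ equals $\varprojlim\{\pi_\B([a,b]);f^\B_\G\}$ with $f^\B_\G(\pi_\B([a,b]))=\pi_\G([a,b])$. Each $\pi_\B([a,b])$ is a subcontinuum of $X(\B)$ containing $a_\B$ and $b_\B$, so hereditary unicoherence gives $[a_\B,b_\B]\subseteq\pi_\B([a,b])$, whence $f^\B_\G([a_\B,b_\B])\subseteq f^\B_\G(\pi_\B([a,b]))=\pi_\G([a,b])$ for every $\B\ge\G$. Taking the union over $\B$ and using that $\pi_\G([a,b])$, a continuous image of a compactum, is closed, we obtain $L_\G\subseteq\pi_\G([a,b])$. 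Combining the two inclusions yields $\pi_\G([a,b])=L_\G$, the assertion.

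I expect the only genuine content to lie in the first inclusion: one must manufacture the auxiliary subcontinuum $L$ of the inverse limit and then invoke that $[a,b]$ is the smallest subcontinuum through $a$ and $b$, which rests squarely on hereditary unicoherence of the $X(\B)$. The reverse inclusion is essentially automatic from the thread description of a subcontinuum of an inverse limit, and the one piece of bookkeeping — reconciling the index range $\B>\G$ in the statement with $\B\ge\G$ in the argument — is absorbed by the closure, as noted in the first paragraph.
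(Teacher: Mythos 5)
Your proof is correct and follows essentially the same route as the paper's: both build the auxiliary inverse limit of the closures of the unions, use that any subcontinuum through $a$ and $b$ contains $[a,b]$ to get one inclusion, and use that each $\pi_\B([a,b])$ contains $[a_\B,b_\B]$ to get the other. The only cosmetic difference is that the paper concludes by identifying the auxiliary limit with $[a,b]$ outright rather than splitting into two inclusions.
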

	
	\begin{proof}
		Write $J_\G = \bigcup \big \{f^\B_\G\big ( [a_\B,b_\B] \big ): \B > \G\big \}$.
		We know $[a,b]$ has the form 
		$\varprojlim \{I_\B;f^\B_\G: \G,\B \in \WW\}$ for some subcontinua $I_\B \subset X(\B)$.
		Since each $I_\B$ contains $\{a_\B,b_\B\}$ it contains $[a_\B,b_\B]$ .
		By commutativity each $I_\G = f^\B_\G(I_\B)$ contains $f^\B_\G\big ( [a_\B,b_\B] \big )$ for $\B> \G$.
		hence $ I_\G$ contains $J_\G$ and by closure contains $\overline {J_\G}$.
		
		By commutativity each $f^\B_\G$ maps $J_\B$ onto $J_\G$.
		By continuity $f^\B_\G$ maps $\overline {J_\B}$ onto $\overline{J_\G}$.
		Hence $J = \varprojlim \{\overline{J_\B};f^\B_\G: \G,\B \in \WW\}$ is a well defined subcontinuum containing $\{a,b\}$.
		We have shown it is minimal with respect to containing $\{a,b\}$.
		We conclude $J = [a,b]$ and each $I_\B = \overline{J_\B}$ as required.
	\end{proof}
	
	\section{The Trivial Composant}
	\noindent
	We have already determined the weak cut structure of the nontrivial composant.
	Namely $c \in X-E$ is the only point to not weakly cut its composant.
	
	This section examines the trivial composant $E \subset X$.
	We show $E$ is weakly cut by its every point.
	Hence the continuum $\widetilde X$ from Theorem 1 has exactly one strong non-cut point.
	
	To that end recall $E$ is the set of eventually constant $\W_1$-sequences. The first claim is that the subcontinua of $E$ share the property of being \textit{eventually constant}.
	
	\begin{claim}\label{stab}
		Suppose the subcontinuum $L \subset E$ meets both $X(\B)$ and $X - X(\B+1)$ for some $\B<\W_1$.
		Then $X(\B+1) \subset L$.
	\end{claim}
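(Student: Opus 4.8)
The plan is to work through the coordinate projections $\pi_\D\colon X\to X(\D)$ and use the identity $L=\varprojlim\{\pi_\D(L);f^\D_\G\}$ from the excerpt, so that $X(\B+1)\subseteq L$ will follow once we know $X(\B+1)\subseteq\pi_\D(L)$ for every $\D<\W_1$.

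First I would check that $\pi_{\B+1}(L)=X(\B+1)$. Fix $w\in L\cap X(\B)$ and $v\in L\setminus X(\B+1)$; since $L\subseteq E$ the point $v$ lies in some $X(\G)$, and as every $X(\D)$ with $\D\le\B+1$ sits inside $X(\B+1)$ we must have $\G>\B+1$. Then $w_{\B+1}\in X(\B)\subseteq D(\B+1)$ by Condition~(a), while $v_{\B+1}=f^\G_{\B+1}(v_\G)$ with $v_\G\in X(\G)-X(\B+1)$, so $v_{\B+1}\in C(\B+1)$ by Condition~(b). Hence the subcontinuum $\pi_{\B+1}(L)$ meets the two distinct composants $C(\B+1)$ and $D(\B+1)$ of the indecomposable continuum $X(\B+1)$; since a subcontinuum meeting two distinct composants cannot be proper, $\pi_{\B+1}(L)=X(\B+1)$.

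The heart of the argument is to promote this to $X(\B+1)\subseteq\pi_\D(L)$ for all $\D\ge\B+1$. For such $\D$, commutativity of the diagram gives $f^\D_{\B+1}(\pi_\D(L))=\pi_{\B+1}(L)=X(\B+1)$. Condition~(b) says $f^\D_{\B+1}$ carries $X(\D)-X(\B+1)$ into $C(\B+1)$, and $f^\D_{\B+1}$ restricts to the identity on $X(\B+1)$; together these give $(f^\D_{\B+1})^{-1}(z)=\{z\}$ for every $z\in X(\B+1)-C(\B+1)$. Since $\pi_\D(L)$ surjects onto $X(\B+1)$ under $f^\D_{\B+1}$, it must contain each such $z$, that is, $X(\B+1)-C(\B+1)\subseteq\pi_\D(L)$; as $C(\B+1)$ is a composant of an indecomposable continuum its complement is dense (it contains the dense composant $D(\B+1)$), and $\pi_\D(L)$ is compact hence closed, so $X(\B+1)\subseteq\pi_\D(L)$. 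For $\D<\B+1$ one simply has $\pi_\D(L)=f^{\B+1}_\D(\pi_{\B+1}(L))=X(\D)$. Plugging all of this into $L=\varprojlim\{\pi_\D(L);f^\D_\G\}$: given $z\in X(\B+1)$, for $\D\ge\B+1$ the coordinate $z_\D$ lies in $X(\B+1)\subseteq\pi_\D(L)$, and for $\D<\B+1$ it lies in $X(\D)=\pi_\D(L)$; hence $z\in L$, which is what we want.

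I expect the only genuine difficulty to be that middle step: recognising that Condition~(b) makes each bonding map $f^\D_{\B+1}$ injective over the dense set $X(\B+1)-C(\B+1)$, and exploiting this to pull the entire copy of $X(\B+1)$ back up into $\pi_\D(L)$. The remaining ingredients --- the composant-meeting argument of the first step and the bookkeeping with the nested identifications $X(\D)\subseteq X(\B+1)\subseteq\dots$ --- should be routine.
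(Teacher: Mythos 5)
Your proof is correct, and while it shares the paper's two central ingredients --- the two-composant argument forcing a surjection onto $X(\beta+1)$, and Condition (b) together with density of $D(\beta+1)$ to finish --- the scaffolding around them is genuinely different. The paper stays inside $X$: it picks a single $\alpha>\beta+1$ with $L\cap\big(X(\alpha)-X(\beta+1)\big)\neq\varnothing$, uses hereditary unicoherence to see that $X(\alpha)\cap L$ is a subcontinuum, pushes it down by $f^\alpha_{\beta+1}$, and observes that since the part of $X(\alpha)\cap L$ lying outside $X(\beta+1)$ lands in $C(\beta+1)$, the retract $L\cap X(\beta+1)$ must already contain the dense composant $D(\beta+1)$; a single density argument then gives $X(\beta+1)\subset L$ directly. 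You instead work with the projections $\pi_\delta(L)$, which are automatically subcontinua (no appeal to hereditary unicoherence of $X$), prove $\pi_{\beta+1}(L)=X(\beta+1)$ by the same composant count, and then lift $X(\beta+1)$ back into every $\pi_\delta(L)$ via the observation that Condition (b) makes $f^\delta_{\beta+1}$ injective over $X(\beta+1)-C(\beta+1)$, finishing with $L=\varprojlim\{\pi_\delta(L)\}$. Your route costs a density argument at every level $\delta\ge\beta+1$ plus the inverse-limit representation of $L$, but buys independence from hereditary unicoherence of the limit; the paper's route is shorter because it only ever needs to examine $L\cap X(\beta+1)$ itself. The small bookkeeping step you elide --- that $v\notin X(\beta+1)$ as a subset of $X$ forces $v_\gamma\in X(\gamma)-X(\beta+1)$ --- does follow routinely from the bonding maps being retractions, so there is no gap.
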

	
	\begin{proof}
		By assumption $L$ meets $X(\A) - X(\B+1)$ for some $\A > \B+1$.
		By hereditary unicoherence $X(\A) \cap L$ is a subcontinuum. 
		Since $$X(\A) \cap L =   \big ((X(\A) \cap L) \cap X(\B+1) \big) \cup \big ((X(\A) \cap L) - X(\B+1) \big )$$ we know $f^{\A}_{\B+1} \big (X(\A) \cap L \big) $ equals
		\begin{center}
			$ 
			f^{\A}_{\B+1}\big ( (X(\A) \cap L) \cap X(\B+1) \big) \cup f^{\A}_{\B+1} \big ((X(\A) \cap L) - X(\B+1) \big )$.
		\end{center}
		
		Since the map is a retraction the first summand equals $L \cap X(\B+1)$ which meets $X(\B)$ by assumption and hence meets $D(\B+1)$.
		By (b) the second summand is contained in $C(\B+1)$.
		
		Thus the subcontinuum $f^{\A}_{\B+1}  (X(\A) \cap L ) \subset X(\B+1)$ meets the distinct composants $C(\B+1)$ and $D(\B+1)$ 
		hence equals $X(\B+1)$.
		Since the second summand is contained in $C(\B+1)$ the first summand $L \cap X(\B+1)$ must contain $D(\B+1)$.
		Since $L \cap X(\B+1)$ is closed and $D(\B+1)$ dense we have $L \cap X(\B+1) = X(\B+1)$ and so $X(\B+1) \subset L$.
	\end{proof}
	
	One consequence of Claim \ref{stab} is any subcontinuum that meets all $X(\A)$ must contain all $X(\A)$ and hence equal $X$.
	The next claim follows.
	
	\begin{claim}\label{ec}
		Each subcontinuum of $E$ is contained in some $X(\A)$.
	\end{claim}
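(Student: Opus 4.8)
The plan is to run an argument by contradiction that turns Claim \ref{stab} back on itself. Suppose $L \subset E$ is a subcontinuum that is contained in no single $X(\A)$. Because $L$ is nonempty and $E = \bigcup\{X(\A):\A<\W_1\}$, some point of $L$ lies in some $X(\B_0)$; by nestedness of the $X(\A)$ this means $L$ meets $X(\B)$ for every $\B \ge \B_0$. On the other hand, since $L \not\subset X(\B+1)$, the continuum $L$ also meets $X - X(\B+1)$ for each such $\B$.

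Thus for every $\B \ge \B_0$ both hypotheses of Claim \ref{stab} are satisfied, and the claim yields $X(\B+1) \subset L$. Since the $X(\A)$ are nested, every $X(\A)$ satisfies $X(\A) \subset X(\max\{\A,\B_0\}+1) \subset L$, so $L$ contains — in particular meets — every $X(\A)$. By the observation recorded immediately after Claim \ref{stab}, that a subcontinuum meeting every $X(\A)$ must equal $X$, we conclude $L = X$. But $L \subset E$ while $X \ne E$, since the nontrivial composant contains the point $c \in X - E$. This contradiction shows $L$ must after all lie inside some $X(\A)$.

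I do not anticipate a real obstacle: the substantive work is already done in Claim \ref{stab}, and the remaining steps are bookkeeping. The only points deserving a moment of care are verifying that ``$L$ is contained in no $X(\A)$'' genuinely produces, for every sufficiently large $\B$, a point of $L$ outside $X(\B+1)$ (immediate from the definition), and locating the single index $\B_0$ with $L \cap X(\B_0) \ne \0$, which is available because $L$ is a nonempty subset of the union $\bigcup_{\A<\W_1} X(\A)$. One could equally avoid citing the post-Claim remark and instead observe directly that $X(\A) \subset L$ for all $\A$ gives $E \subseteq L \subseteq E$, hence $L = E$, which is absurd because $L$ is closed while the composant $E$ is dense and proper in $X$.
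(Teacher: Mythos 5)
Your argument is correct and is essentially the paper's own: the paper proves this claim by the one-line observation (following Claim \ref{stab}) that a subcontinuum of $E$ meeting every $X(\A)$ must contain every $X(\A)$ and hence equal $X$, which is impossible since $E$ is a proper subset. Your write-up just spells out the same contradiction in more detail, including the harmless alternative ending via $L=E$ being closed versus $E$ dense and proper.
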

	
	\begin{claim}\label{trivcut}
		Each point of $E$ weakly cuts its composant.
	\end{claim}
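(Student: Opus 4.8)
The plan is to push the question down to a single metric stage and then lift the conclusion back up by hereditary unicoherence. Fix an arbitrary $p \in E$. Since $E = \bigcup\{X(\B) : \B < \W_1\}$ and the $X(\B)$ are nested, $p$ lies in $X(\G)$ for some $\G < \W_1$; fix one such $\G$. Now $X(\G)$ is an indecomposable \emph{metric} continuum, so by the fact noted in the Introduction (which rests on an indecomposable metric continuum having $\cn$ composants) the point $p$ is a weak cut point of $X(\G)$; that is, there are distinct $x, y \in X(\G) - p$ with $[x, p, y]_{X(\G)}$. I claim these same $x, y$ show that $p$ weakly cuts its composant $E = \K(p)$, i.e.\ that $[x,p,y]_X$.

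The main step is to show $p \in K$ for every subcontinuum $K \subset X$ with $\{x,y\} \subset K$. If $K = X$ this is trivial, so suppose $K$ is proper; then, since $x \in K$ and $\K(x) = E$, we get $K \subset E$, and Claim \ref{ec} places $K$ inside some $X(\A)$ with $\A < \W_1$. Set $\D = \max\{\G,\A\}$, so that $X(\G) \subset X(\D)$ and $X(\A) \subset X(\D)$; in particular $x, p, y$ and $K$ all lie in $X(\D)$. Since $X(\D)$ is hereditarily unicoherent and $X(\G) \subset X(\D)$ is a subcontinuum containing $x, p, y$, property (II) of hereditary unicoherence gives $[x,p,y]_{X(\D)} \iff [x,p,y]_{X(\G)}$; the right side holds, so $[x,p,y]_{X(\D)}$ holds. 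Hence $x$ and $y$ lie in different continuum components of $X(\D) - p$, and therefore the connected set $K \subset X(\D)$ joining $x$ to $y$ cannot be contained in $X(\D) - p$; that is, $p \in K$.

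Having shown this for all such $K$, it follows that $x$ and $y$ lie in different continuum components of $X - p$, i.e.\ $[x,p,y]_X$. Since $x, y \in E = \K(p)$, this is exactly the statement that $p$ weakly cuts its composant. As $p \in E$ was arbitrary, the claim follows.

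I expect the only real obstacle to be the possibility that a subcontinuum of $X$ joining $x$ to $y$ lives at a stage $X(\A)$ far above $\G$ and avoids $p$ there: indeed $x$ and $y$ may well lie in a common composant of a large $X(\A)$ although they do not in $X(\G)$, so the blunter tool ``irreducibility between points of different composants'' is not stable under enlarging the stage and would not suffice. What \emph{is} stable is the weak-cut relation $[x,\cdot,y]$, by property (II) of hereditary unicoherence, and Claim \ref{ec} guarantees that every competitor $K$ does sit inside some hereditarily unicoherent enlargement $X(\D)$ of $X(\G)$. The passage to $\D = \max\{\G,\A\}$ is merely the bookkeeping needed so that $X(\G)$ is genuinely a subcontinuum of the continuum to which property (II) is applied; everything else is routine.
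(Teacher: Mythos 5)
Your proof is correct, but it takes a genuinely different route from the paper's. The paper's proof is shorter and needs neither the weak-cut-point property of metric indecomposable continua nor hereditary unicoherence: for $b \in X(\B)$ it takes one witness $a \in X(\B)-b$ and the other $c \in X(\B+2)-X(\B+1)$, and Claim \ref{stab} forces any subcontinuum containing $\{a,c\}$ to contain all of $X(\B+1)\supset X(\B)$, hence to contain $b$ regardless of where $b$ sits inside $X(\B)$; the witnesses live at different levels of the tower and the telescoping structure does all the work. You instead take both witnesses inside the single stage $X(\G)$, in distinct composants of $X(\G)$, and propagate $[x,p,y]$ upward through the tower via property (II) of hereditary unicoherence, with Claim \ref{ec} guaranteeing that every competitor subcontinuum lands in some stage $X(\D)\supset X(\G)$. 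Each step of yours checks out: properness of $K$ puts $K\subset E$, Claim \ref{ec} puts it in some $X(\A)$, nestedness gives a common hereditarily unicoherent stage $X(\D)$ containing both $K$ and $X(\G)$, and (II) transfers the cut. What your version buys is the stronger conclusion that the weak cut structure of each individual stage survives into $X$ (both witnesses can be chosen inside any stage containing $p$); what the paper's buys is brevity and independence from hereditary unicoherence, resting only on Claim \ref{stab}. Your closing remark --- that irreducibility between the witnesses is not stable under enlarging the stage, whereas the weak-cut relation is --- correctly identifies why property (II), rather than composant-counting at a fixed stage, is the right stabilizer.
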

	
	\begin{proof}
		Let $b \in E$ be arbitrary.
		Then $b \in X(\B)$ for some $\B < \W_1$.
		Choose any $a \in X(\B) - b$ and $c \in X(\B+2) - X(\B+1)$.
		By Claim \ref{stab} each subcontinuum that includes $a$ and $c$ must contain $X(\B+1)$.
		Hence the subcontinuum contains $X(\B)$ and includes $b$.
		We conclude $[a,b,c]$.
	\end{proof}

	\begin{thm}
		There exists a Bellamy continuum with exactly one strong non-cut point.
	\end{thm}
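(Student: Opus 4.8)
The plan is to transport the three cut/non-cut facts already in hand through the quotient map $\phi\colon X\to\widetilde X$ collapsing the pair $\{x,y\}$ to a single point $\star$, where $X$, $E$, $x\in E$, $y\in X-E$ and $\widetilde X$ are as in the proof of the preceding theorem. Recall $X$ has exactly the composants $E=\K(x)$ and $X-E=\K(y)$, so any subcontinuum of $X$ meeting both composants is $X$; recall also the elementary fact that $[u,z,w]_X$ forces every subcontinuum of $X$ through $u$ and $w$ to contain $z$ (such a subcontinuum is a semicontinuum lying in $X-z$). The one new ingredient I would prove first is a structural description of the subcontinua of $\widetilde X$ through $\star$: if $K$ is a \emph{proper} subcontinuum of $\widetilde X$ with $\star\in K$, then $\phi^{-1}(K)=C_x\cup C_y$ with $C_x$ a subcontinuum of $E$ containing $x$ and $C_y$ a subcontinuum of $X-E$ containing $y$. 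The argument: $\phi^{-1}(K)$ is closed with image $K$; were it connected it would be a subcontinuum of $X$ meeting both composants, hence all of $X$, forcing $K=\widetilde X$; so it is disconnected, and since $\phi$ is a closed map and $\star$ the only non-injectivity point, any separation of $\phi^{-1}(K)$ must split $x$ from $y$ (otherwise the two images would be disjoint closed sets covering the connected set $K$). This yields two distinct components $C_x\ni x$ and $C_y\ni y$, excludes any third component (a clopen-in-$\phi^{-1}(K)$ neighbourhood of it would separate $\phi^{-1}(K)$ without splitting $x$ from $y$), and makes each of $C_x,C_y$ proper, hence contained in $\K(x)=E$ and $\K(y)=X-E$ respectively.

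With that in place I would check, point by point, that every point of $\widetilde X$ other than $c$ weakly cuts $\widetilde X$. For $\star$: apply Lemma \ref{tailwcp} to $y\in\K(a)-c$ to get $u,w\in X-E$ with $[u,y,w]_X$; the proof of that lemma produces the witnesses among the pairs $(p^m,q^m)$, so for large $m$ we may also assume $u,w\notin\{x,y,c\}$ and $\phi(u)\neq\phi(w)$. If a subcontinuum $K$ of $\widetilde X$ contained $\phi(u),\phi(w)$ but not $\star$, then $\phi^{-1}(K)$ would be a subcontinuum of $X-y$ through $u$ and $w$, impossible; so $\star\in K$, and $\star$ weakly cuts $\widetilde X$. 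For a point $\phi(z)$ with $z\in(X-E)-\{c,y\}$: Lemma \ref{tailwcp} gives $u,w\in X-E$ with $[u,z,w]_X$, again choosable away from $x,y,c,z$; given a subcontinuum $K\ni\phi(u),\phi(w)$, if $K=\widetilde X$ we are done, if $\star\notin K$ then $\phi^{-1}(K)$ is a subcontinuum of $X$ through $u,w$ hence through $z$, and if $\star\in K$ with $K$ proper the structural fact puts $u,w$ (both outside $E$) into $C_y$, hence $z\in C_y$; in every case $\phi(z)\in K$. For a point $\phi(z)$ with $z\in E-\{x\}$: Claim \ref{trivcut} and its proof give $u,w\in E$ with $[u,z,w]_X$, where $u$ may be taken in $X(\B)$ and $w$ in $X(\B+2)-X(\B+1)$, both avoiding $x$ (these sets are infinite); the identical case split, now using the $C_x$ half of the structural fact, shows $\phi(z)\in K$.

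Every point of $\widetilde X$ is $\star$, or $\phi(z)$ for a unique $z\in(X-E)-\{y\}$, or $\phi(z)$ for a unique $z\in E-\{x\}$, so the three cases exhaust $\widetilde X$; since the preceding theorem already shows $\widetilde X$ is a Bellamy continuum in which $c$ does not weakly cut the space, we conclude that $c$ is the unique strong non-cut point of $\widetilde X$. I expect the main obstacle to be the structural lemma on $\phi^{-1}(K)$ for $K\ni\star$ --- in particular excluding a third component and forcing the two components into opposite composants --- together with the routine but fiddly bookkeeping needed to keep the Lemma \ref{tailwcp} and Claim \ref{trivcut} witnesses away from the finitely many forbidden points $x,y,c,z$; the rest is a mechanical case check.
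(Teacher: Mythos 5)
Your proposal is correct and its overall skeleton matches the paper's: push Lemma \ref{tailwcp} and Claim \ref{trivcut} through the quotient $X\to\widetilde X$ and analyse, case by case, whether a would-be separating subcontinuum contains the collapsed point. The key technical step is carried out differently, though. You isolate an upfront structural lemma: a \emph{proper} subcontinuum $K\ni\star$ of $\widetilde X$ pulls back to exactly two components $C_x\subset E$ and $C_y\subset X-E$, proved via separations of $\phi^{-1}(K)$ and the component-equals-quasicomponent fact for compact Hausdorff spaces; each remaining case then reduces to applying $[u,z,w]_X$ inside whichever of $C_x,C_y$ captures both witnesses. The paper has no such lemma and instead argues inline: given $L\subset\widetilde X - b$ through the witnesses with $z\in L$, it takes the \emph{continuum} components $C_r,C_s$ of the witnesses in $L-z$, invokes boundary bumping to show the closures of their preimages $\overline{D_r},\overline{D_s}$ reach $\{x,y\}$, rules out $y\in\overline{D_r}$ because a proper subcontinuum cannot meet both composants, and derives the contradiction from the proper subcontinuum $\overline{D_r}\cup\overline{D_s}$. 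Your route buys a cleaner and more reusable description of the subcontinua of $\widetilde X$ through $\star$ and a near-mechanical case check, at the cost of the quasicomponent argument needed to exclude a third component; the paper's route avoids that general topology but pays with boundary bumping and a slightly more delicate closure argument. The witness bookkeeping you flag (keeping $u,w$ away from $x,y,c,z$) is genuine but unproblematic either way: the paper resolves it by reselecting $r,s$ inside their continuum components of $X-d$ via boundary bumping, while your device of moving further along the tail works equally well since the $p^m$ and $q^m$ are pairwise distinct by Properties (3) and (4).
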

	
	\begin{proof}
		Let $X$ be the continuum from Theorem 1 and $X \to \widetilde X$ the quotient map that treats $\{x,y\}$ as the single point $z \in \widetilde X$.
		We have already shown $\tl c \in \widetilde X$ is a strong non-cut point.
		Now suppose $b \in \widetilde X- \tl c$.
		For $b=z$ we have $[r,b,s]$ for each $r \in \tl{\K(x}) -z$ and $s \in \tl{\K(y}) -z$.

		Now assume $b \ne z$.
		Then $b = \wt d$ for some unique $d \in X - \{x,y\}$.
		The composant $\K(d)$ is one of $E$ or $X-E$.
		In the first case Claim \ref{trivcut} says $[r,d,s]$ for some pair $r,s \in \K(d)$.
		In the second case Lemma \ref{tailwcp} says the same.
		
		Boundary bumping implies each continuum component of $X- d$ contains a nondegenerate subcontinuum hence has infinitely many points.
		That means we can reselect $r$ and $s$ outside $\{x,y\}$ if necessary.
		Hence $\wt r,\wt s \ne z$.
		We claim $[\wt r,b,\wt s]$ thus $b$ weakly cuts $\wt X$.
		
		Suppose to reach a contradiction $\{\wt r,\wt s\} \subset L \subset \widetilde X - b$ for some sub- continuum $L \subset \widetilde X$.
		Clearly $z \in L$ as otherwise $L = \wt P$ for some subcontinuum \mbox{$P \subset X - \{x,y\}$}.
		Then $P$ contradicts how $[r,d,s]$.
		
		Let $C_r$ and $C_s$ be the continuum components of $\wt r$ and $\wt s$ in $L - z$ respectively.
		Then $C_r = \wt D_r$ and $C_s = \wt D_s$  for semicontinua $D_r, D_d \subset $ $X - \{x,y\}$.
		It follows $D_r,D_s \subset \K(d)-\{x,y\}$.
		Boundary bumping says $z \in \overline C_r$ and $z \in \overline C_s$.
		The definition of the quotient topology implies $\overline D_r$ and $\overline D_s$ meet $\{x,y\}$.
		
		Without loss of generality $\K(d)=\K(x) = E$.
		Recall $D_r$ and $D_s$ are mapped into $L$ which is proper.
		Continuity of the quotient says $\overline D_r$ and $\overline D_s$ are mapped into $L$ hence proper.
		We cannot have $y \in \overline D_r$ as then the proper subcontinuum $\overline D_r$ meets both composants of $X$.
		Likewise $y \notin \overline D_s$.
		We conclude both $x \in \overline D_r$ and $ x \in \overline D_s$.
		
		Hence $\overline D_r \cup  \overline D_s$ is a subcontinuum of $X$.
		Since $X$ is indecomposable each summand is nowhere dense, and the same follows for the union.
		Thus the subcontinuum $\overline D_r \cup  \overline D_s$ contradicts how $[r,d,s]$.
		We conclude no such subcontinuum $L \subset \wt X$ exists and therefore $[\wt r,b,\wt s]$.
	\end{proof}

	\section*{Acknowledgements}
	This research was supported by the Irish Research Council Postgraduate Scholarship Scheme grant number GOIPG/2015/2744. 
	The author would like to thank Professor Paul Bankston and Doctor Aisling McCluskey for their help in preparing the manuscript.

\end{document}